\newtheorem{Theorem}{Theorem}[section]
\newtheorem{Definition}[Theorem]{Definition}
\newtheorem{Proposition}[Theorem]{Proposition}
\newtheorem{Lemma}[Theorem]{Lemma}
\newtheorem{Corollary}[Theorem]{Corollary}
\newtheorem{Remark}[Theorem]{Remark}
\newtheorem{Hypothesis}[Theorem]{Hypothesis}
\def\R{\mathbb R}
\def\N{\mathbb N}
\def\C{\mathbb C}
\def\E{\mathbb E}
\def\C{\mathcal C}
\def\eps{\varepsilon}
\def\ds{\displaystyle}
\newcommand{\dist}{\operatorname{dist}}
\newcommand{\Tr}{\operatorname{Tr}}
\newcommand{\one}{1\!\!\!\;\mathrm{l}}
\title[Maximal Sobolev regularity in infinite dimension]{\bf Maximal Sobolev regularity in Neumann problems for gradient systems in infinite dimensional domains}
\author[G. Da Prato]{Giuseppe Da Prato}
\address{Scuola Normale Superiore\\
Piazza dei Cavalieri, 7\\ 
56126 Pisa, Italy}
\email{g.daprato@sns.it}
\author[A. Lunardi]{Alessandra Lunardi}
\address{
Dipartimento di Matematica\\
Universit\`a di Parma\\
Parco Area delle Scienze, 53/A\\
43124 Parma, Italy}
\email{alessandra.lunardi@unipr.it}
\subjclass[2010]{35R15, 37L40, 35B65}
\keywords{Kolmogorov operators in infinite dimensions,  maximal Sobolev regularity, Neumann boundary condition}
\begin{document}

 \begin{abstract}  
We consider an elliptic Kolmogorov equation $\lambda u - Ku = f$ in a convex subset $\mathcal C$ of a separable Hilbert space $X$. The Kolmogorov operator $K$ is a realization of   $u\mapsto \frac12\mbox{Tr}\;[D^2u(x)]+ \langle Ax - DU(x),Du(x)\rangle$,  $A $ is a self--adjoint operator in $X$ and $U:X\mapsto \R \cup \{+\infty\}$ is  a convex function. We prove that for $\lambda >0$ and  $f\in L^2(\mathcal C,\nu)$
the weak solution $u$ belongs to the Sobolev space $W^{2,2}(\mathcal C,\nu)$, where $\nu$ is the log-concave measure associated to  the system. Moreover we prove maximal estimates on the gradient of $u$, that allow to show that $u$ satisfies the Neumann boundary condition in the sense of traces at the boundary of $\mathcal C$.  The general results are applied to Kolmogorov equations of reaction--diffusion and Cahn--Hilliard stochastic PDEÕs in convex sets of suitable Hilbert spaces. 
 \end{abstract}

\maketitle

 
\section{Introduction} 
 Let $X$ be an infinite dimensional separable Hilbert space, with norm $\|\cdot\|$ and scalar product $ \langle\cdot,\cdot\rangle$. We study  the Neumann problem for the differential equation
  \begin{equation}
  \label{Neumann}
  \lambda u-\frac12\mbox{Tr}\;[D^2u]-\langle Ax-DU(x),Du\rangle=f,\quad x\in \C .
  \end{equation} 
Here,  $A:D(A)\subset X\to X$ is a linear self--adjoint operator, strictly negative and such that $A^{-1}$ is of trace class, $U: X\to \R \cup \{+\infty\}$ is a convex function, and $\C$  is a convex closed subset of $X$.  Moreover, $\lambda>0$ and $f:X\to\R$ are given data. $Du$ and $D^2u$ represent the gradient and the Hessian of $u$ and $\mbox{Tr}\;[D^2u]$ the trace of $D^2u$.

The Neumann problem for equation \eqref{Neumann} can be considered as the Kolmogorov equation corresponding to  the   stochastic  
variational inequality with reflection 
\begin{equation}
\label{e11}
\left\{\begin{array}{l}
dX(t,x)-A X(t,x)\,dt + N_{\mathcal C}(X(t))dt \ni dW(t),\\\\
X(0)=x, 
\end{array}\right. 
\end{equation}
where    $N_{\mathcal C}$ is the normal cone to  ${\C}$ and $W(t)$ is a $X$-valued cylindrical Wiener process. This is because,  at least formally, we have
\begin{equation}
\label{e10}
u(x)=\int_0^\infty e^{-\lambda t}\E[f(X(t,x))]dt,\quad x\in X.
\end{equation}

In the case that $X$ is finite dimensional a quite general theory of stochastic  
variational inequalities  with maximal monotone coefficients was developed by  E.  C\'epa \cite{C98}, who proved  existence  and uniqueness of a solution  $X(\cdot,x)$ to \eqref{e11} and established  its connection with the celebrated Skorokhod  problem.
The fact that the function $u $ given by formula \eqref{e10} fulfills the Neumann boundary condition on $\partial \,\mathcal C$ was proved in \cite{BDP08}.

In infinite dimensions the situation is more delicate. 
The first important result is in the seminal paper by E. Nualart and E. Pardoux \cite{NP}, who solved a  reaction-diffusion problem with reflection in $X=L^2(0,1),$
\begin{equation}
\label{e2}
\left\{\begin{array}{l}
dX(t,x)-\Delta X(t,x)\,dt + f(X(t,x))\,dt + N_{\mathcal K}(X(t))\,dt \ni dW(t),\\\\
X(0)=x,
\end{array}\right. 
\end{equation}
where  $f$ is decreasing and   $N_{\mathcal K}$ is the normal cone to the set  $\mathcal K$   of nonnegative functions.
Then,  L. Zambotti  \cite{Z}  exhibited an explicit (unique) invariant measure $\mu$, and  proved the existence of a unique weak solution in $L^2(X,\mu)$ to \eqref{Neumann}, as well as basic integration by parts formulae   on 
$ \mathcal K$ (note that the interior part of $ \mathcal K$ is empty). Related results, applied to some interface problem, were provided by T. Funaki and S. Olla \cite{FO}. 
A part of these results  have been extended by A. Debussche and L. Zambotti \cite{DZ} to the  reflection problem  for  a  Cahn--Hilliard equation again on a suitable convex set of nonnegative functions.

Later on the study of \eqref{e2} and \eqref{Neumann} was pursued, using Lagrangian flows  by L. Ambrosio, G. Savar\'e and L. Zambotti in \cite{ASZ}, and using Dirichlet forms  by M. R\"ockner,  R.-C. Zhu, X.-C. Zhu in  \cite{Z}.
In  these papers,  among other results,  existence  and uniqueness of a weak solution of \eqref{Neumann} where established, but further regularity and  existence of a vanishing normal derivative on the boundary remained open problems.

For  smooth convex sets and for the Ornstein--Uhlenbeck equation with $U\equiv 0$, problem  \eqref{Neumann} was studied by V. Barbu, G. Da Prato and L. Tubaro in \cite{BDPT1,BDPT2}, extending to the infinite dimensional setting a  penalization argument already used in the finite dimensional case (e.g., \cite{DPL}) and referring to the Airault--Malliavin theory of infinite dimensional surface measures  \cite{AM}.  They   showed that the weak solution of  \eqref{Neumann}   is in a Sobolev space $W^{2,2}({\mathcal C}, \mu)$, where  $\mu$ is the Gaussian measure with mean $0$ and covariance $Q=-\frac12\,A^{-1}$, which is symmetrizing (and hence, invariant) for the Ornstein--Uhlenbeck operator in the equation. They also addressed the Neumann boundary condition in the sense of traces at the boundary of Sobolev functions.
However their proof is not convincing,  and a first goal of our paper is to provide a complete proof of the Neumann condition. Our proof too uses penalization  and provides maximal regularity estimates for equation \eqref{Neumann}  as in \cite{BDPT1,BDPT2}, but  the proof of existence and vanishing at the boundary  of the normal derivative of the solution $u$ is completely different. Besides the  regularity of the second derivative of $u$, we  use  in essential way another maximal regularity result, namely that $\|(-A)^{1/2}Du\|$ is in $L^2$ (a fact also proved but not exploited for the existence of the normal derivative in \cite{BDPT1,BDPT2}), as well as the recent study of traces of Sobolev functions
on hypersurfaces by P. Celada and A. Lunardi \cite{Tracce}. 
 
The second goal of our paper is to study problem \eqref{Neumann} for a broad class of convex potentials $U \not\equiv 0$.
The extension of the regularity theory to this case is not straightforward.  The relevant invariant measure is the log--concave measure $\nu(dx):= e^{-2U(x)}\mu(dx)$, where  $\mu$ is still the Gaussian measure of mean $0$ and covariance  $Q=-\frac12\,A^{-1}$.

Let us give more details about the contents of the paper and the encountered difficulties. 

In Section 2 the Sobolev spaces $W^{1,p}(\C,\nu)$, $W^{2,p}(\C,\nu)$ are  defined, in such a way that the self--adjoint operator $K$ canonically associated to the quadratic form $\mathcal E(u,v) = \frac12\int_{\C}\langle Du, Dv\rangle d\mu$  in $ W^{1,2}(\C,\nu)$, is a realization of   the Kolmogorov operator $u\mapsto \frac12\mbox{Tr}\;[D^2u] -\langle Ax + DU(x),Du\rangle$ in $L^{2}(\C,\nu)$. A function $u\in W^{1,2}(\C,\nu)$  is called weak solution to the Neumann problem for \eqref{Neumann}
if
$$\int_{\C}\lambda\,u\,\varphi \, d\nu= \frac12\int_{\C}\langle Du, D\varphi\rangle d\nu + \int_{\C} \varphi\,f\,  d\nu , \quad \forall \varphi\in W^{1,2}(\C,\nu).$$
It is not difficult to see that  for every $\lambda >0$ and $f\in L^{2}(\C,\nu)$ the Neumann problem has a unique weak solution $u$, which is just $R(\lambda, K)f$. 

Further properties of the weak solution are studied in Section \ref{sect_Neumann}. In \S \ref{Regularity}
we prove that   $u$ belongs to $W^{2,2}(\C,\nu)$, and that $\int_\C\|(-A)^{1/2}Du\|^2d\nu<\infty$.  In the case $\C =X$ this was already shown in \cite{DPL0}, and in fact for the proof we use some results from \cite{DPL0}. 
Indeed, as in the finite dimensional case \cite{DPL},  we consider a family of penalized problems in the whole space $X$, with  $\alpha>0$, 
   \begin{equation}
  \label{e8a}
  \lambda u_\alpha-\frac12\mbox{Tr}\;[D^2u_\alpha]-\langle Ax-DU_\alpha(x),Du_\alpha\rangle+\frac1\alpha\;\langle x-\Pi_{ \C}(x),Du_\alpha\rangle=f,
  \end{equation} 
where   $U_\alpha$ are suitable approximations of $U$ and $\Pi_{ \C}(x)$ is the projection of $x$ on $ \C$. Setting $\nu_{\alpha}(dx):= e^{-2U_{\alpha}(x)}\mu(dx)$ and using the estimates of \cite{DPL0} for equations in the whole space,   we show that the restrictions of $u_\alpha $ to $\C$ are bounded in $W^{2,2}(\C, \nu_{\alpha})$ by a constant independent of $\alpha$, and that $\int_\C\|(-A)^{1/2}Du_{\alpha}\|^2d\nu_{\alpha} $ is bounded by a constant independent of $\alpha$. These estimates are the key ingredients to
 obtain the desired result, letting $\alpha\to 0$. 
 
In the case that $U$ belongs to a suitable Sobolev space, we can take as $U_\alpha$ the Moreau--Yosida approximations of $U$ (note that $( x-\Pi_{\overline{\C}}(x))/\alpha$ is the gradient of the Moreau--Yosida approximations of the characteristic function of  $ \C$) as in \cite{DPL} and in \cite{BDPT1,BDPT2}. 
However there are interesting examples, such as the Kolmogorov equations of stochastic Cahn--Hilliard equations considered in \S \ref{Cahn--Hilliard}, for which $U$ has not sufficient Sobolev regularity, and we have to make other approximations. 

The  Neumann boundary condition is discussed in \S \ref{Neu_bordo}. We assume that $\mathcal C = \{x:\;G(x) \leq 0\}$, where $G$ is a fixed version of a nondegenerate Sobolev function, belonging to suitable $W^{2,p}$ spaces. The theory of traces of Sobolev functions with respect to Gaussian measures at level sets of  $G$ was recently addressed in \cite{Tracce}; here we extend parts of it to the case of the weighted Gaussian measure $\nu$. The traces belong to weighted Lebesgue spaces with respect to the Hausdorff--Gauss surface measure $\rho$ of \cite{FP}, naturally associated to the Gaussian measure $\mu$.  
We are interested in the level set $G=0$, which is the boundary of $\mathcal C$ if $G$ is continuous.  
The Neumann boundary condition is meant as 
\begin{equation}
\label{Neumanneffettiva}
\langle Du, DG \rangle =0\quad {\rm at}\;G^{-1}(0) ,
\end{equation}
in the sense of traces of Sobolev functions. Of course we need that $\langle Du, DG \rangle $ is a Sobolev function, and it is here that the estimate 
 $\int_\C\|(-A)^{1/2}Du\|^2d\nu<\infty$ is used. 
   
The last section of the paper contains examples of admissible sets $\mathcal C$, and two applications to Kolmogorov equations of stochastic PDE's. 
The first one is a reaction--diffusion equation in $X=L^2(0, 1)$, with polynomially growing nonlinearity $x\mapsto F\circ x$. It corresponds to the Nualart--Pardoux reflection problem, with a more regular closed convex set replacing the set of nonnegative functions considered in \cite{NP}. 
The second one is the Cahn--Hilliard  equation considered in \cite{DZ}. Here the nonlinearity is   $x\mapsto \partial ^2/\partial \xi^2(F\circ x)$. For such a nonlinearity be of gradient type, we choose a  Sobolev space of negative order as a reference space $X$. Again, the set of nonnegative functions is replaced by a more regular convex set. 

As expected, the infinite dimensional case exhibits extra difficulties and different features with respect to the finite dimensional case treated in \cite{DPL}.  For instance the condition $\int_{\C}\|(-A)^{1/2}Du\|^2d\nu<\infty$  is   satisfied  by any $u\in  W^{1,2}(\C,\nu)$ in finite dimensions. Instead, in infinite dimensions this extra estimate is significant, and it is crucially used to prove that $u$ satisfies the Neumann boundary condition. Moreover Sobolev functions have continuous versions  in finite dimensions, so that there are not  difficulties due to the possible discontinuities of $G$; in particular $G^{-1}(0)$ is just the boundary of $\C$. In infinite dimensions we consider a fixed quasicontinuous (in the sense of Gaussian capacities, see sect. 
\ref{Neu_bordo}, \cite[Sect. 5.9]{Boga}) version of $G$ and everything goes through, paying  the price of more technicalities to deal with.

It would be interesting to generalize our results to less regular convex sets, as the ones considered in \cite{NP} and \cite{DZ}. For the moment, the main obstacles are the regularity requirements of the trace theory  from \cite{Tracce}.

\section{Notation and preliminaries}
\label{Notation}

Let $X$ be a separable Hilbert space endowed with a Gaussian measure $\mu := {\mathcal N}_{\;0, Q}$ of mean $0$ and covariance operator $Q$, where  $Q\in {\mathcal L}(X)$ is   self-adjoint, strictly positive, and  with finite trace. We choose once and for all an orthonormal basis $\{ e_k:\;k\in \N\}$ of $X$ such that  $Qe_k = \lambda_k e_k$ for $k\in \N$.  We denote by  $P_n$   the orthogonal projection on the linear span of $e_1, \ldots, e_n$. 

For each $k\in \N \cup \{+\infty\}$ we denote by   ${\mathcal F}{\mathcal C}^k_b(X)$  the set of the cylindrical functions  $\varphi(x) = \phi(x_1, \ldots, x_n)$ for some $n\in \N$, with $\phi\in C^k_b(\R^n)$.

 
\subsection{Sobolev spaces}
\label{Sobolev spaces}


\subsubsection{Sobolev spaces with respect to $\mu$}

If a function $\varphi:X\mapsto \R$ is differentiable at $x\in X$, we denote by $D\varphi (x)$ its gradient at $x$. 

For $  \theta \in \R$ and $p\geq1$ the Sobolev spaces $W^{1,p}_{\theta}(X,\mu)$  are  the completions of ${\mathcal F}{\mathcal C}^1_b(X)$   in the Sobolev norms
$$\|\varphi \|_{W^{1,p}_{\theta}(X, \mu)}^p : =  \int_X (|\varphi |^p +  \|Q^{\theta}D\varphi\|^p)d\mu = 
\int_X \bigg(  |\varphi |^p +  \bigg(\sum_{k=1}^\infty (\lambda^{ \theta}_k  D_k\varphi )^2\bigg)^{p/2} \bigg)d\mu .$$
For $\theta=1/2$ they coincide with the usual Sobolev spaces of the Malliavin Calculus, see e.g. \cite[Ch. 5]{Boga}; for $\theta =0  $ and $p=2$ they are the spaces considered in \cite{DPZ3}.  Such  completions are identified with subspaces of $L^{p}(X,\mu)$ since the integration by parts formula 
\begin{equation}
\label{partimuX}
 \int_X D_k\varphi\,\psi\,d\mu= -\int_X D_k\psi\,\varphi\,d\mu  +\frac{1}{\lambda_k}\int_X x_k\varphi\,\psi\,d\mu, \quad \varphi, \;\psi \in {\mathcal F}{\mathcal C}^1_b(X),
 \end{equation}
allows easily to show that the operators $Q^{\theta}D: {\mathcal F}{\mathcal C}^1_b(X)\mapsto 
L^p(X, \mu; X)$ are closable in $L^{p}(X,\mu)$, and the domains of their closures (still denoted by $Q^{\theta}D$) coincide with $W^{1,p}_{\theta}(X,\mu)$.

\subsubsection{Sobolev spaces with respect to $\nu$}

We  shall assume that  $U: X\mapsto \R \cup \{+\infty \}$  is a convex function that can be approximated by a family of nice functions $U_{\alpha}$. Precisely, 

\begin{Hypothesis}
\label{Hyp}
$U: X\mapsto \R \cup \{+\infty \}$  is  convex. There are functions $U_{\alpha}:X \to \R$, $\alpha >0$, with the following properties. 
\begin{itemize}
\item[(i)] Each $U_{\alpha}$ is convex, differentiable at every $x\in X$, and $DU_{\alpha}$ is Lipschitz continuous; 
\item[(ii)] $\exists C\in \R :$ $C\leq U_{\alpha}(x) \leq U(x)$ for every $\alpha >0$,  a.e. $x\in X$; 
\item[(iii)] there exists $p_0>2$ such that $\lim_{\alpha\to 0}U_{\alpha} = U$ in $W^{1,p_0}_{1/2}(X, \mu)$.
\end{itemize}
\end{Hypothesis}

Since each  $U_{\alpha}$ is continuously differentiable and  has Lipschitz continuous gradient,  then $U_{\alpha} \in W^{1,q}(X,\mu)$ for every $q$. This can be easily proved arguing as in the case $q=2$ of \cite[Prop. 10.11]{Beppe}. Moreover, taking into account that both $U$ and $U_{\alpha}$ have a.e. values in $[C, +\infty)$, where the function $\xi\mapsto e^{-2\xi}$ is bounded and Lipschitz continuous, we obtain easily that $e^{-2U_{\alpha}} $ converges to $e^{-2U}$ in $W^{1,p_0}_{1/2}(X,\mu)$ as $\alpha \to 0$.

Note that the heaviest requirement in Hypothesis \ref{Hyp} is that $DU_{\alpha}$ is Lipschitz continuous. The other ones are satisfied by any convex $U\in W^{1,p_0}_{1/2}(X, \mu)$, such that 
$U(x)\geq C$ for a.e. $x\in X$. 

We describe here a (large enough) class of functions $U$ that satisfy Hypothesis \ref{Hyp}. Let $U:X \to \R\cup \{+\infty\} $ be convex, bounded from below, and lower semicontinuous. 
For $\alpha >0$ we denote by  $U_{\alpha}$  the Moreau--Yosida approximations of $U$,  defined by  
\begin{equation}
\label{Moreau}
U_\alpha(x): =\inf\left\{U(y)+\frac{|x-y|^2}{2\alpha},\;y\in X\right\} .
\end{equation}
Then, (i) is satisfied, and   $U_{\alpha}(x)$ converges monotonically  to $U(x)$ for each $x$ as $\alpha \to 0$, so that (ii) is satisfied too. Moreover, denoting by $D_0U(x)$ the element with minimal norm in the subdifferential  of $U(x)$,  at any $x$ such that the subdifferential of $U(x)$ is not empty,  $\|DU_{\alpha}(x)\|$  converges monotonically  to $\|D_0U(x)\|$.  At such points we have
\begin{equation}
\label{M-Y}
 \|DU_{\alpha} - D_0U\|^2 \leq \|D_0U\|^2 -  \|DU_{\alpha} \|^2 .
\end{equation}
See e.g. \cite[Ch. 2]{Brezis}.

\begin{Lemma}
\label{regU-Moreau}
Let $U:X \to \R\cup \{+\infty\} $ be convex, bounded from below, and lower semicontinuous. Assume in addition that $x\mapsto \|D_0U(x)\| \in L^{p_1} (X, \mu)$ for some $p_1>1$. Then $U\in W^{1,p_0}_{0}(X,\mu)$ for each $p_0<p_1$,  $DU=D_0U$ a.e., and $\lim_{\alpha \to 0}U_{\alpha} =U$ in $W^{1,p_0}_{0}(X,\mu)$; consequently $U\in W^{1,p_0}_{\theta}(X,\mu)$ and $\lim_{\alpha \to 0}U_{\alpha} =U$ in $W^{1,p_0}_{\theta}(X,\mu)$ for every $\theta \geq 0$. 
\end{Lemma}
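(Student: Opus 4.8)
The plan is to establish the three assertions of Lemma~\ref{regU-Moreau} in turn, relying on the Moreau--Yosida machinery recalled above together with a standard density/closability argument for Sobolev spaces with respect to $\mu$. First I would fix $p_0 < p_1$ and show that the family $\{DU_\alpha\}$ is bounded in $L^{p_0}(X,\mu;X)$, in fact that it converges in that space. Indeed, at every point $x$ where the subdifferential of $U$ is nonempty --- which, since $U$ is convex, bounded below and lower semicontinuous, is $\mu$-a.e. $x$ --- inequality \eqref{M-Y} gives $\|DU_\alpha(x) - D_0U(x)\| \to 0$ and $\|DU_\alpha(x)\| \le \|D_0U(x)\|$. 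By hypothesis $\|D_0U\| \in L^{p_1}(X,\mu) \subset L^{p_0}(X,\mu)$, so $\|DU_\alpha\|$ is dominated by a fixed $L^{p_0}$ function; dominated convergence then yields $DU_\alpha \to D_0U$ in $L^{p_0}(X,\mu;X)$.

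Next I would control $U_\alpha$ itself in $L^{p_0}(X,\mu)$. We know $U_\alpha(x) \uparrow U(x)$ pointwise and $C \le U_\alpha \le U$, so monotone convergence handles convergence once we know $U \in L^{p_0}(X,\mu)$; and the latter follows from the Sobolev--Poincaré-type inequality for $\mu$ (a logarithmic Sobolev / Gaussian Poincaré inequality bounding $\|U_\alpha - \int_X U_\alpha\,d\mu\|_{L^{p_0}}$ in terms of $\|DU_\alpha\|_{L^{p_0}}$), together with the already-established uniform bound on $\|DU_\alpha\|_{L^{p_0}}$ and the fact that $\int_X U_\alpha\,d\mu$ is bounded (it lies between $C$ and $\int_X U_1\,d\mu < \infty$ since $U_1$, being continuously differentiable with Lipschitz gradient, is in every $W^{1,q}(X,\mu)$ as noted in the excerpt, hence integrable). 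Thus $U_\alpha$ is bounded in $L^{p_0}(X,\mu)$ and converges there to $U$, which in particular shows $U \in L^{p_0}(X,\mu)$.

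With both $U_\alpha \to U$ in $L^{p_0}(X,\mu)$ and $DU_\alpha \to D_0U$ in $L^{p_0}(X,\mu;X)$, and since each $U_\alpha \in W^{1,p_0}_0(X,\mu)$ (indeed $U_\alpha \in W^{1,q}(X,\mu)$ for all $q$), closability of the gradient operator $D$ in $L^{p_0}(X,\mu)$ --- which is exactly what underlies the definition of $W^{1,p_0}_0(X,\mu)$ via \eqref{partimuX} --- forces $U \in W^{1,p_0}_0(X,\mu)$ with $DU = D_0U$ a.e., and $U_\alpha \to U$ in $W^{1,p_0}_0(X,\mu)$. Finally, for the $\theta$-weighted statement: since $Q$ has finite trace, each $\lambda_k \le \|Q\|_{\mathcal L(X)}$, so for $\theta \ge 0$ one has the pointwise bound $\|Q^\theta v\| \le \|Q\|^\theta\|v\|$ on $X$, giving $\|Q^\theta D\varphi\|_{L^{p_0}} \le \|Q\|^\theta \|D\varphi\|_{L^{p_0}}$; hence convergence in $W^{1,p_0}_0$ implies convergence in $W^{1,p_0}_\theta$, and $W^{1,p_0}_0 \hookrightarrow W^{1,p_0}_\theta$, so $U \in W^{1,p_0}_\theta(X,\mu)$ and $U_\alpha \to U$ there as well.

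The main obstacle I anticipate is the $L^{p_0}$ bound on $U_\alpha$ (equivalently, verifying $U \in L^{p_0}(X,\mu)$): the pointwise information $C \le U_\alpha \le U$ only gives a one-sided bound, so one genuinely needs a Gaussian Poincaré/Sobolev inequality in $L^{p_0}$ for the centered functions $U_\alpha - \int_X U_\alpha\,d\mu$, applied uniformly in $\alpha$, rather than anything elementary; everything else is dominated/monotone convergence plus the closability that is built into the definitions. A secondary subtlety is the measure-theoretic claim that the subdifferential of $U$ is nonempty $\mu$-a.e., which should follow from the fact that a lower semicontinuous convex function bounded below on a separable Hilbert space is subdifferentiable on a dense $G_\delta$ of full measure (its points of continuity, or more precisely where it is finite and the Moreau--Yosida gradients converge), but this point deserves to be stated carefully.
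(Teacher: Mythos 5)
Your treatment of the gradients is fine and matches the paper: the pointwise domination $\|DU_\alpha\|\le\|D_0U\|$ together with \eqref{M-Y} and dominated convergence gives $DU_\alpha\to D_0U$ in $L^{p_0}(X,\mu;X)$, and the closability of $D$ then yields $U\in W^{1,p_0}_0(X,\mu)$ with $DU=D_0U$ once the $L^{p_0}$ convergence of $U_\alpha$ is in hand; the passage to $\theta\ge 0$ via $\|Q^\theta v\|\le\|Q\|^\theta\|v\|$ is also correct. The gap is in the step you yourself flag as the main obstacle, namely $U\in L^{p_0}(X,\mu)$. Your control of the means is wrong in direction: the Moreau--Yosida approximations \emph{increase} to $U$ as $\alpha\downarrow 0$ (a smaller $\alpha$ means a larger penalty $\|x-y\|^2/2\alpha$, hence a larger infimum), so for $\alpha\le 1$ one has $U_\alpha\ge U_1$ and $\int_X U_\alpha\,d\mu\ge\int_X U_1\,d\mu$. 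The upper bound you would actually need is $\int_X U\,d\mu$, whose finiteness is precisely what is at stake, so the argument as written is circular. (The Poincar\'e route could be repaired --- the uniform bound $\|U_\alpha-m_\alpha\|_{L^1}\le c$ plus the fact that $U\le M$ on a set of positive $\mu$-measure forces $m_\alpha\le M+c/\mu(\{U\le M\})$ --- but that is not what you wrote, and it still invokes the $L^{p_0}$ Gaussian Poincar\'e inequality as an external ingredient.)

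The paper avoids all of this with a one-line convexity argument that you should adopt: for $\mu$-a.e.\ $x$ and any fixed $y$ with $U(y)<\infty$, the subgradient inequality gives
$$C\le U(x)\le U(y)-\langle D_0U(x),y-x\rangle\le U(y)+\|D_0U(x)\|\,(\|x\|+\|y\|),$$
and since $\|D_0U(\cdot)\|\in L^{p_1}(X,\mu)$ while $x\mapsto\|x\|\in L^r(X,\mu)$ for every $r$, H\"older's inequality gives $U\in L^{p_0}(X,\mu)$ for every $p_0<p_1$ directly; the convergence $U_\alpha\to U$ in $L^{p_0}$ then follows by dominated (not merely monotone) convergence from $C\le U_\alpha\le U$. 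A minor further point: the $\mu$-a.e.\ nonemptiness of the subdifferential is not something to be derived from a dense-$G_\delta$ statement (density does not give full measure); it is implicit in the hypothesis that $x\mapsto\|D_0U(x)\|$ defines an element of $L^{p_1}(X,\mu)$.
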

\begin{proof}
Let us prove that $U\in L^q(X, \mu)$ for $ p_0<p_1$. For $\mu$-a.e. $x\in X$ and for each $y\in X$ we have $ U(y)-U(x) \geq \langle D_0U(x), y-x\rangle $  by the convexity assumption, so that 
$$C \leq U(x) \leq U(y) - \langle D_0U(x), y-x\rangle  \leq U(y) + \|D_0U(x)\| (\|y\| + \|x\|).$$
Fix any $y$ such that $U(y)<\infty$. Since $x\mapsto \|x\| \in L^r(X, \mu)$ for every $r$, by the H\"older inequality 
$U\in L^{p_0}(X, \mu)$ for $ p_0<p_1$. 

Let us  prove that $U\in W^{1,p_0}_{0}(X,\mu)$. Recall that  $U_{\alpha} \in W^{1,p}_{0}(X,\mu)$ for every $\alpha >0$, $p>1$.  By dominated convergence $U_{\alpha}\to U$ in $L^{p_0}(X, \mu)$,  and $\| DU_{\alpha}- D_0U\| \to 0$ in $L^{p_0}(X, \mu)$, since $  C \leq U_{\alpha} \leq U$ and $ \|DU_{\alpha} - D_0U\|  \leq \|D_0U\|$. 
Then, $U\in W^{1,p_0}_{0}(X,\mu)$ and $DU= D_0U$ a.e. 
\end{proof}

If $U$ satisfies the assumptions of Lemma \ref{regU-Moreau} with $p_1>2$ then its Moreau--Yosida approximations satisfy Hypothesis \ref{Hyp}. 
However, there are important examples such that  $U\notin  W^{1,p_0} (X, \mu)$  for any  $p_0>1$. 
We shall see  one of such examples in \S \ref{Cahn--Hilliard}, where the Moreau--Yosida approximations will be replaced by other {\em ad hoc} approximations. 

We denote by     $\nu$ the log-concave measure
\begin{equation}
\label{e3}
\nu(dx)= e^{-2U(x)} 
\mu(dx),
\end{equation}
Since $e^{-2U } $ is bounded, $\nu(X)< +\infty$.

By Lemma \ref{regU-Moreau}, we may apply the integration by parts formula \eqref{partimuX} with   $\psi e^{-2U}$ replacing $\psi$, that belongs to $W^{1,p_0}_{1/2}(X,\mu)$ for $\psi \in {\mathcal F}{\mathcal C}^1_b(X)$. We get, for $\varphi$, $\psi\in   {\mathcal F}{\mathcal C}^1_b(X)$ and $h\in \N$, 
\begin{equation}
 \label{partinuX}
\int_X D_h\varphi\,\psi\,d\nu+\int_X D_h\psi\,\varphi\,d\nu= -2 \int_X
D_hU\,\varphi\,\psi  \,d\nu+\frac1{\lambda_h}\int_X x_h \varphi\,\psi\,d\nu   .
 \end{equation} 

Once again, the Sobolev spaces  associated to the measure $\nu$ are introduced in a standard way with the help of the  integration by parts formula \eqref{partinuX}. We recall that  $ {\mathcal L}_2(X)$ is the space of the Hilbert--Schmidt operators, that is the bounded linear operators  $L:X\mapsto X$ such that $\|L\|_{{\mathcal L}_2(X)}^2 :=\sum_{h,k}\langle Le_h, e_k\rangle^2 <\infty$. 

 For $p>1$ we set as usual $p' : = p/(p-1)$. In the paper \cite{DPL0} we proved that for all   $q\geq p_0'$ the operators
$$D: {\mathcal F}{\mathcal C}^1_b(X)\mapsto 
L^q(X, \nu; X), \quad Q^{\pm 1/2}D: {\mathcal F}{\mathcal C}^1_b(X)\mapsto 
L^q(X, \nu; X)$$
$$( D,  D^2 ): {\mathcal F}{\mathcal C}^2_b(X) \mapsto L^{q}(X,\nu; X)\times L^{q}(X,\nu; {\mathcal L}_2(X))$$
are closable. Their closures were still denoted by 
 $ D$, $Q^{1/2} D$, $Q^{-1/2} D$, and by 
$(D,  D^2 )$, respectively. The Sobolev spaces 
 $W^{1,q} (X, \nu)$, $W^{1,q}_{1/2}(X, \nu)$, $W^{1,q}_{-1/2}(X, \nu)$, were defined as  the domains of $ D$, $Q^{1/2} D$, $Q^{-1/2} D$ in $L^q(X, \nu)$, respectively. The space  $W^{2,q}(X, \nu)$ was defined as the domain of $( D,  D^2 )$ in $L^q(X, \nu)$.

The Sobolev spaces on general subsets $  \C \subset X$ may be defined in several ways. The most convenient for our purposes relies on the following lemma, which allows to extend the above definitions to the case of a subset $\C\subset X$, $\C\neq X$. 

\begin{Lemma}
\label{Le:chiusura}
Let $G$ be fixed version of an element of  $\cap_{p>1}W^{1,p}_{1/2}(X,  \mu)$, and set $\C = G^{-1}((-\infty, 0])$. Then for every  $\theta\in \R$ and  $q\geq p_0'$  the operators  $ D: {\mathcal F}{\mathcal C}^1_b(X)\mapsto L^q(\C, \nu; X)$, $u\mapsto  Du_{|\C}$, $Q^{\theta}D: {\mathcal F}{\mathcal C}^1_b(X)\mapsto  L^q(\C, \nu; X)$, 
$u\mapsto Q^{\theta}Du$ are closable. 
Their closures are still denoted by $ D$, $Q^{ \theta}D$, respectively. 

Similarly, the operator $( D,  D^2 ): {\mathcal F}{\mathcal C}^2_b(X)$ $ \mapsto L^q(\C, \nu; X)$ $\times $ $L^{q}(\C,\nu; {\mathcal L}_2(X))$, $u\mapsto ( Du_{|\C}, D^2 u_{|\C})$ is closable. The closure is still denoted by $( D, D^2 )$. 
\end{Lemma}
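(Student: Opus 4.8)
The plan is to prove closability by exhibiting, for each of the three operators, an adjoint-type identity that forces the limit of $D\varphi_n$ (resp.\ $Q^\theta D\varphi_n$, resp.\ $(D\varphi_n, D^2\varphi_n)$) to vanish whenever $\varphi_n\to 0$ in $L^q(\C,\nu)$ and the gradients converge in the appropriate $L^q(\C,\nu;\cdot)$ space. The natural tool is the integration by parts formula \eqref{partinuX}, which is already available on all of $X$; the only new feature is that we must integrate over $\C=G^{-1}((-\infty,0])$ rather than over $X$, so a naive application produces an unwanted boundary term at $G^{-1}(0)$. The device to kill that term is to test not against an arbitrary $\psi\in\mathcal{FC}^1_b(X)$ but against $\psi\,\chi_\eps(G)$, where $\chi_\eps:\R\to[0,1]$ is smooth, equals $1$ on $(-\infty,-\eps]$ and $0$ on $[0,\infty)$, so that $\psi\chi_\eps(G)$ is supported in the interior region $\{G<0\}$ and hence $\int_\C$ can be replaced by $\int_X$ for that integrand. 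Since $G\in\cap_{p>1}W^{1,p}_{1/2}(X,\mu)$, the composition $\chi_\eps(G)$ lies in $W^{1,p}_{1/2}(X,\mu)$ with $D(\chi_\eps(G))=\chi_\eps'(G)DG$, and because $e^{-2U}$ is bounded, it also lies in the $\nu$-Sobolev space; this is what lets \eqref{partinuX} be applied with $\psi$ replaced by $\psi\chi_\eps(G)$.

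Concretely, for the first-order operator $D$ I would argue as follows. Suppose $\varphi_n\in\mathcal{FC}^1_b(X)$ with $\varphi_n\to 0$ in $L^q(\C,\nu)$ and $D\varphi_n\to F$ in $L^q(\C,\nu;X)$. Fix $h\in\N$, $\psi\in\mathcal{FC}^1_b(X)$, and $\eps>0$, and apply \eqref{partinuX} on $X$ with $\varphi_n$ in place of $\varphi$ and $\psi\chi_\eps(G)$ in place of $\psi$; since the integrands all vanish outside $\C$ we may rewrite every integral as $\int_\C$. Letting $n\to\infty$ and using that $\psi$, $D_h\psi$, $\chi_\eps(G)$, $D_h(\chi_\eps(G))$, $D_hU$, $x_h$ are all in suitable $L^r(\C,\nu)$ spaces with $r$ the conjugate exponent (here $q\ge p_0'$ is exactly what guarantees $D_hU\in L^q{}'$ via $U\in W^{1,p_0}_{1/2}$, and $x_h, x-\Pi$-type factors are in every $L^r$), the only surviving term is $\int_\C \langle F, e_h\rangle\,\psi\,\chi_\eps(G)\,d\nu = 0$. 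Now let $\eps\to 0$: $\chi_\eps(G)\to \mathbf 1_{\{G<0\}}$ pointwise and boundedly, and since $\nu(G^{-1}(0))$ contributes nothing once we know (from $G$ being a genuine Sobolev function, so $\{G=0\}$ could a priori have positive measure — but then on that set $DG=0$ $\mu$-a.e., hence the whole boundary term already vanished) we conclude $\int_\C\langle F,e_h\rangle\psi\,d\nu=0$ for all $h$ and all $\psi\in\mathcal{FC}^1_b(X)$, whence $F=0$ $\nu$-a.e.\ on $\C$. The same computation with $Q^\theta$ inserted (i.e.\ replacing $D_h\varphi_n$ by $\lambda_h^\theta D_h\varphi_n$ and correspondingly rescaling) gives closability of $Q^\theta D$; only the constants change, not the structure.

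For the second-order statement the argument is the same in spirit but iterated: one applies \eqref{partinuX} twice, i.e.\ to the first derivatives, again with the cutoff $\chi_\eps(G)$ controlling the boundary, and uses that $\mathcal{FC}^2_b$ test functions together with $G\in\cap_p W^{1,p}_{1/2}$ supply enough integrability for the Hilbert--Schmidt-valued term; the relevant closability on all of $X$ of $(D,D^2)$ in $L^q(X,\nu)$ for $q\ge p_0'$ is already established in \cite{DPL0}, so here it is really a matter of transporting that fact to $\C$ by the localization device. I expect the main obstacle to be precisely the treatment of the level set $G^{-1}(0)$: one must make sure that the boundary term produced by integrating $\chi_\eps'(G)DG$ against the $\varphi_n$ genuinely disappears in the limit, which relies on the fact that $DG=0$ $\mu$-a.e.\ on $\{G=0\}$ for a Sobolev function $G$ and on the dominated convergence $\chi_\eps'(G)DG\to 0$ in $L^q(\C,\nu;X)$ as $\eps\to0$, the latter holding because $|\chi_\eps'(G)DG|\le C\eps^{-1}\mathbf 1_{\{-\eps<G<0\}}\|DG\|$ and $\{-\eps<G<0\}$ shrinks — a uniform-in-$\eps$ bound here requires a little care and is the technical heart of the proof. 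Everything else is the routine verification that the non-boundary terms converge, which follows from Hölder's inequality together with Hypothesis \ref{Hyp}(iii) and Lemma \ref{regU-Moreau}.
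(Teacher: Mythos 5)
Your proposal follows essentially the same route as the paper: there one tests against $\psi_n=\psi\,\theta(nG)$ with $\theta$ smooth, $\equiv 1$ on $(-\infty,-2]$ and $\equiv 0$ on $[-1,+\infty)$, so that $u_k\psi_n$ vanishes outside $\C$, the integration by parts \eqref{partimuX}--\eqref{partinuX} runs over all of $X$ with no surface term, and the limits are taken first in $k$ (your $n$) and then in $n$ (your $\eps$), with the second-order case handled by reapplying the same identity to the first derivatives, exactly as you describe. One correction to your closing paragraph: no uniform-in-$\eps$ bound on $\chi_\eps'(G)DG$ is needed, and $\chi_\eps'(G)DG\to 0$ in $L^q(\C,\nu;X)$ would in general be false; the derivative of the cutoff only ever appears paired against $\varphi_n\to 0$ at \emph{fixed} $\eps$, and after that limit the surviving identity $\int_\C\langle F,e_h\rangle\,\psi\,\chi_\eps(G)\,d\nu=0$ contains no $\chi_\eps'$ at all, so removing the cutoff is plain dominated convergence on $\chi_\eps(G)\to\one_{\{G<0\}}$ — this step is not the technical heart of the proof but a non-issue.
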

\begin{proof}
Let  $u_k \in {\mathcal F}{\mathcal C}^1_b(X)$ be such that  $u_k\to 0$ and  $ Du_k\to \Phi$ (respectively, $Q^{  1/2}Du_k\to \Phi$, $Q^{ -1/2}Du_k\to \Phi$) in $L^q(\C, \nu; X)$ as $k\to \infty$. We claim that  $\int_{\C} \langle \Phi, e_i\rangle \, \psi \,d\nu =0$ for each  $\psi \in  L^{q'}(\C, \nu)$ and $i\in \N$. Since the restrictions to $\C$ of the elements of  $C^1_b(\C)$ are dense in $ L^{q'}(\C, \nu)$ (as a consequence of the density of  $C^1_b(X)$ in $L^{q'}(X, \nu)$) it is sufficient to prove that 
\begin{equation}
\label{nullo}
\int_{\C}  \langle \Phi, e_i\rangle \, \psi \,d\nu  =0, \quad \psi \in  C^1_b(X).
\end{equation}
To this aim, for every $\psi \in  C^1_b(X)$ we approach its restriction to $\C$    by restrictions to  $\C$ of elements of  $W^{1,q'}_{1/2} (X, \mu)$ that vanish outside $  \C$. This is to reduce integrals over $\C$ to integrals over $X$, avoiding surface integrals in the next integration by parts. 
We fix a function  $\theta \in C^{\infty}_c(\R)$ such that  $\theta(r) =0$ for  $r\geq -1$, $\theta(r) =1$ for $r\leq -2$, and we set   
$$\theta_n(r):= \theta (nr), \quad \psi_n(x) := \psi (x)\theta_n(G(x)), \qquad n\in \N, \;x\in X.$$
By dominated convergence the sequence $(\psi_{n|\C})$ goes to   $\psi _{|\C}$ in $L^{q'}(\C, \nu)$ as $n\to \infty$. Then, 
$$\int_{\C}  \langle \Phi, e_i\rangle \, \psi \,d\nu = \lim_{n\to \infty} \int_{\C} \langle \Phi, e_i\rangle \, \psi_n \,d\nu .$$
Moreover each  $\psi_n$ vanishes in $G^{-1}([-1/n, +\infty))$, and  $ Q^{1/2}D\psi_n =   Q^{1/2}D\psi\,\theta_n\circ G + \psi \theta_n'\circ G\,  Q^{1/2}DG$, so that  $ \psi_n$ belongs to  $  W^{1,r}_{1/2} (X, \mu)$ for every $r>1$. 
It follows that for each   $k\in \N$,   $u_k\psi_n$  belongs to  $ W^{1,r} _{1/2}(X, \mu)$  for every $r>1$. 

Taking into account that $e^{-2U}\in W^{1,p_0}_0(X, \mu) \subset W^{1,p_0}_{1/2}(X, \mu)$, 
the integration by parts formula 
\eqref{partimuX} gives
$$\int_{\C} D_i(u_k\psi_n) d\nu = \int_{X} D_i(u_k\psi_n)  \;e^{-2U}d\mu =  2\int_{\C}  u_k\psi_n\, D_iU \,d\nu + 
\frac{1}{\lambda_i}  \int_{\C} x_i u_k\psi_n\,d\nu ,$$
so that 
\begin{equation}
\label{serve}
\int_{\C} \psi_n \,D_iu_k\,d\nu = -\int_{\C} u_kD_i\psi_n\,d\nu + 2\int_{\C}  u_k\psi_n\, D_iU \,d\nu + \frac{1}{\lambda_i}  \int_{\C} x_i u_k\psi_n\, d\nu
\end{equation}
and letting $k\to \infty$ the right hand side converges to  $0$, and the left hand side converges to   $\lambda_i^{-\theta}\int_{\C} \psi_n  \langle \Phi, e_i\rangle  d\mu$ (it is here that we need $q\geq p_0'$). Then, $\int_{\C} \psi_n\langle \Phi, e_i\rangle d\mu =0$ for each  $n$ and  \eqref{nullo} holds. 

The proof of the second part of the statement is similar. In this case we have a sequence 
$u_k \in {\mathcal F}{\mathcal C}^2_b(X)$   such that  $u_k\to 0$ and  $ Du_k\to \Phi$ in $L^q(\C, \nu; X)$,  $ D^2u_k \to {\mathcal Q}$ in $L^q(\C, \nu; {\mathcal L}_2(X))$
as $k\to \infty$. By the first part of the proof, $\Phi =0$. Moreover, formula 
\eqref{serve} applied to $D_ju_k $ instead of $u_k $ gives
$$ \int_{\C} \psi_n\,D_{ij}u_k\, d\nu   = 
  -\int_{\C} D_ju_k\,D_i\psi_n\,d\nu + 2\int_{\C}  D_ju_k\,\psi_n\, D_iU \,d\nu + \frac{1}{\lambda_i}  \int_{\C} x_i D_ju_k\,\psi_n\, d\nu$$
where the left hand side converges to   $\int_{\C} \psi_n \langle {\mathcal Q}e_i, e_j\rangle  d\mu$ and the right hand side converges to  $0$. Then, $\int_{\C} \psi_n  \langle {\mathcal Q}e_i, e_j\rangle  d\mu =0$ for each $n$, so that for every $\psi\in C^1_b(X)$ we have
$$\int_{\C} \psi  \langle {\mathcal Q}e_i, e_j\rangle  d\mu = \lim_{n\to \infty}  \int_{\C} \psi_n  \langle {\mathcal Q}e_i, e_j\rangle  d\mu =0, $$
which implies that ${\mathcal Q}=0$. 
 \end{proof}

The Sobolev spaces   $W^{1,p}_{\theta}(\C, \nu)$ and $W^{2,p} (\C, \nu)$ for $p\geq  p_0'$ are defined as the domains of the closures of the above operators. 
For $p=2$ they are Hilbert spaces with the scalar products
$$\langle u, v\rangle_{W^{1,2}_{\theta}(\C, \nu)} :=  \int_{\C} (u\,v + \langle Q^{\theta}D u, Q^{\theta}D v\rangle_X) d\nu ,$$
$$\langle u, v\rangle_{W^{2,2} (\C, \nu)} :=  \langle u, v\rangle_{W^{1,2}_{\theta}(\C, \nu)} + \int_{\C}  \langle  D^2u , D^2v\rangle_{{\mathcal L}_2(X)}\,d\nu.$$

Of course,  $W^{1,p}_{-1/2}(\C, \nu)\subset W^{1,p}_0 (\C, \nu) \subset W^{1,p}_{1/2}(\C, \nu)$ for every $p\geq p_0'$.

Note that if $G_1=G_2$ a.e., the symmetric difference of the sets $\C_1= G_1^{-1}((-\infty , 0])$ and $\C_2= G_1^{-1}((-\infty , 0])$ is negligible, and the above defined Sobolev spaces on $\C_1$ and $\C_2$ coincide. 

It is not our aim here to develop a complete theory of Sobolev spaces. We just mention some  properties that  will be used in the sequel. 

\begin{Proposition}
\label{varie}
Let $p\geq p_0'$, and let $\theta\in \R$. Then
\begin{itemize} 
\item[(i)] $W^{1,p}_{\theta}(\C, \nu)$ is reflexive; 
\item[(ii)] if a bounded sequence of elements of $W^{1,p}_{\theta}(\C, \nu)$ converges a.e. in $\C$ to a function $f$, then $f\in W^{1,p}_{\theta}(\C, \nu)$; 
\item[(iii)] if $f\in W^{1,2}_{-1/2}(\C, \nu )$, $g\in W^{1,2}_{0}(\C, \nu )$, then $\langle Q^{-1/2}Df, Q^{1/2}Dg\rangle = \langle  Df,  Dg\rangle  $ (as an element of $L^1(\C, \nu)$). 
\end{itemize}
\end{Proposition}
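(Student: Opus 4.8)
The plan is to reduce all three assertions to known properties of the model spaces $L^p(\C,\nu)$ and to the closure constructions of Lemma \ref{Le:chiusura}. For (i), the point is that $W^{1,p}_{\theta}(\C,\nu)$ is, by definition, the domain of a closed operator $Q^\theta D$ acting from $L^p(\C,\nu)$ to $L^p(\C,\nu;X)$; equipped with the graph norm it is isometrically isomorphic to the graph of that operator, which is a closed subspace of the product $L^p(\C,\nu)\times L^p(\C,\nu;X)$. Since $1<p_0'\le p<\infty$ and $\C$ carries the finite measure $\nu$, both factors are reflexive (Lebesgue--Bochner spaces over a finite measure with values in a Hilbert space are reflexive for $1<p<\infty$), hence so is their product and so is any closed subspace of it. This gives (i).

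For (ii), I would use (i) together with Mazur's lemma. Let $(f_n)$ be bounded in $W^{1,p}_{\theta}(\C,\nu)$ with $f_n\to f$ a.e. in $\C$. By reflexivity of $W^{1,p}_{\theta}(\C,\nu)$ a subsequence converges weakly to some $g\in W^{1,p}_{\theta}(\C,\nu)$; in particular $f_n\to g$ weakly in $L^p(\C,\nu)$. On the other hand the sequence is bounded in $L^p(\C,\nu)$ and converges a.e. to $f$, so by the usual argument (Fatou gives $f\in L^p$, then Vitali/uniform integrability of the $p$-th powers, or simply testing against $L^{p'}$ functions and using a.e.\ convergence plus equi-integrability) $f_n\to f$ weakly in $L^p(\C,\nu)$ as well. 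Uniqueness of weak limits forces $f=g$, so $f\in W^{1,p}_{\theta}(\C,\nu)$.

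For (iii), the statement is a pointwise-in-$L^1$ identity, so the natural route is density and closedness. Take $f_n\in{\mathcal F}{\mathcal C}^1_b(X)$ with $f_n\to f$ in $W^{1,2}_{-1/2}(\C,\nu)$ and $g_m\in{\mathcal F}{\mathcal C}^1_b(X)$ with $g_m\to g$ in $W^{1,2}_{0}(\C,\nu)$. For smooth cylindrical functions the identity $\langle Q^{-1/2}Df_n, Q^{1/2}Dg_m\rangle=\langle Df_n, Dg_m\rangle$ holds pointwise, because $Q$ is self-adjoint and $Df_n$, $Dg_m$ lie in the (finite-dimensional) span on which $Q^{\pm1/2}$ are genuine bounded operators; both sides equal $\sum_k D_kf_n\,D_kg_m$. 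Now $Q^{-1/2}Df_n\to Q^{-1/2}Df$ in $L^2(\C,\nu;X)$ and $Q^{1/2}Dg_m\to Q^{1/2}Dg$ in $L^2(\C,\nu;X)$, while simultaneously $Df_n\to Df$ and $Dg_m\to Dg$ in $L^2(\C,\nu;X)$ (the latter because $W^{1,2}_{-1/2}\subset W^{1,2}_0$ and $W^{1,2}_0\subset W^{1,2}_{1/2}$ continuously, as already noted after Lemma \ref{Le:chiusura}, so convergence in the stronger norms forces convergence of the plain gradients). Passing first $n\to\infty$ and then $m\to\infty$ in the $L^1(\C,\nu)$ pairing — each product of an $L^2$-convergent sequence with an $L^2$-bounded one converges in $L^1$ — yields the claimed equality a.e.

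I expect the only genuinely delicate point to be the interchange of limits and the bilinearity argument in (iii): one must make sure that the two approximating sequences can be handled independently, which is why I would take the limits one at a time, using at each stage that one factor converges in $L^2$ while the other stays bounded in $L^2$, so Hölder gives convergence of the product in $L^1$. Everything else is soft functional analysis (graphs of closed operators, reflexivity of Lebesgue--Bochner spaces, Mazur/weak limits) together with the elementary fact that on $\mathcal F\mathcal C^1_b(X)$ the operators $Q^{\pm1/2}D$ are literal compositions with bounded operators, so the algebraic identity in (iii) is immediate there.
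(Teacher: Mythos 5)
Your proposal is correct and follows essentially the same route as the paper: (i) via the graph isometry into $L^p(\C,\nu)\times L^p(\C,\nu;X)$, and (iii) via approximation by cylindrical functions, using the continuous inclusions $W^{1,2}_{-1/2}\subset W^{1,2}_{0}\subset W^{1,2}_{1/2}$ so that both pairings pass to the limit in $L^1$. For (ii) the paper simply cites the Gaussian case in Bogachev (Lemma 5.4.4), and your self-contained argument (weak compactness from (i) plus identification of the weak $L^p$ limit with the a.e.\ limit via uniform integrability) is exactly the argument behind that reference.
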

\begin{proof}
 The proof of statement (i) is similar to  the standard proof in finite dimensions.  The mapping $u\mapsto Tu = (u, Q^{\theta}Du)$ is an isometry from $W^{1,p}_{\theta}(\C, \nu)$ to the product space $E:= L^p(\C, \nu) \times L^p(\C, \nu; X)$, which implies that  the range of $T$ is closed in $E$. Now, $L^p(\C, \nu)$ and $L^p(\C, \nu; X)$ are reflexive (e.g. \cite[Ch. IV]{DU})  so that $E$ is reflexive, and $T(W^{1,p}_{\theta}(\C, \nu))$ is reflexive too. Being isometric to a reflexive space,  $W^{1,p}_{\theta}(\C, \nu)$ is reflexive. 
 
Concerning (ii), the proof of the analogous statement for Gaussian measures given in \cite[Lemma 5.4.4]{Boga} works as well in our case. 
 
Statement (iii) is proved   approaching $f$ by a sequence of ${\mathcal F}{\mathcal C}^1_b(X)$ functions in $W^{1,2}_{-1/2}(X, \nu )$ (and hence, in $W^{1,2}_{0}(X, \nu )$) and 
approaching $g$ by a sequence of ${\mathcal F}{\mathcal C}^1_b(X)$ functions in $W^{1,2}_{0}(X, \nu )$ (and hence, in $W^{1,2}_{1/2}(X, \nu )$). Since the equality $\langle Q^{-1/2}Df_n, QDg_n\rangle = \langle  Df_n,  Dg_n\rangle  $ is true for the approximating functions, the claim follows letting $n\to \infty$.  
\end{proof}  
 
In the rest of the paper to simplify notation we shall drop the subindex $0$, namely we shall set $W^{1,p} (\C, \nu) :=  W^{1,p}_{0}(\C, \nu)$.

\subsection{Elliptic problems in the whole space, with regular $U$}
\label{ellittica}

Here we  report some results from \cite{DPL0} that will be used in the sequel. They concern weak solutions to 
\begin{equation}
\label{e1X}
\lambda u -  \mathcal K u = f, 
\end{equation}
where $ \mathcal K  u= \frac12\mbox{Tr}\;[D^2u] -\langle Ax + DU(x),Du\rangle$, in the case that  $U:X\mapsto \R$ is a differentiable convex function bounded from below, with Lipschitz continuous gradient. 

Given $\lambda >0$, $f\in L^2(X, \nu)$, a weak solution to \eqref{e1X} is a function $u\in W^{1,2}(X, \nu)$ such that 
\begin{equation}
 \label{weak}
\lambda\int_X u\,\varphi\,d\nu+\frac12\int_X\langle Du,D\varphi\rangle\,d\nu=\int_Xf\,\varphi\,d\nu,\quad \forall\varphi\in W^{1,2} (X,\nu).
 \end{equation}
Existence and uniqueness of a weak solution $u$ to  \eqref{e1X} is an easy consequence of the Lax--Milgram lemma. Taking $u$ as a test function and using the H\"older inequality in the right hand side we  obtain
\begin{equation}
\label{diss}
\lambda\int_X u^2d\nu +\frac12\int_X \|Du\|^2d\nu \leq 
 \frac{1}{\lambda} \int_X f^2d\nu  .
\end{equation}

We denote by  $K :D(K) \subset L^2(X, \nu)\mapsto L^2(X, \nu)$ the operator associated to the quadratic form  $(u, \varphi)\mapsto  \int_X\langle  Du, D\varphi\rangle\,d\nu$ in $W^{1,2} (X,\nu)$. So, the domain $D(K )$  consists of all $u\in W^{1,2} (X,\nu)$ such that there exists $v\in L^2(X, \nu)$ satisfying 
 $$\frac{1}{2}\int_X\langle  Du, D\varphi\rangle\,d\nu = - \int_Xv\,\varphi \,d\nu $$
 for all $\varphi\in  W^{1,2} (X,\nu)$, or equivalently for all $\varphi \in {\mathcal F}{\mathcal C}^1_b(X)$. 
In this case, $v =  Ku$. 
The weak solution $u$ to  \eqref{e1X} belongs to $D(K)$ and it is just $(\lambda I- K)^{-1}f$.

\begin{Theorem}
\label{stimeLip}
For every $\lambda >0$ and  $f\in L^2(X, \nu)$ the weak solution $u$  to \eqref{e1X} belongs to $W^{2,2}(X, \nu) $ $\cap$ $W^{1,2}_{-1/2}(X, \nu)$, and   the estimate
 \begin{equation}
\label{e25}
\ds\lambda\int_X \| Du \|^2d\nu +\frac12\int_X \quad\Tr\;[ (D^2u )^2]d\nu 
+\int_X\|Q^{-1/2 }Du\|^2d\nu  
\le 4\int_X f^2d\nu  
\end{equation}
holds. Moreover, the weak solution is also a strong solution in the Friedrichs  sense, that is: 
there is a sequence $(u_n)$ of ${\mathcal F}{\mathcal C}^2_b(X)$ functions (in fact, $u_n\in {\mathcal F}{\mathcal C}^3_b(X)$) that converge to $u$ in $L^2(X, \nu)$ and such that $\lambda u_n - {\mathcal K} u_n \to f$ in $L^2(X, \nu)$. 
\end{Theorem}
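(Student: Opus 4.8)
The plan is to prove the estimate first for smooth data with a regularized drift, then pass to the limit. Since $DU_\alpha$ is only Lipschitz and we want identities involving $D^2u$, the natural first step is to approximate $U$ by functions $V$ that are smooth cylindrical (say $V = P_n^*U\star$ a mollifier, composed with $P_n$), and to approximate $f$ by cylindrical functions $f_n\in\mathcal{F}\mathcal{C}^\infty_b(X)$. For such regularized problems the solution $u$ is smooth in the relevant variables (a finite-dimensional elliptic regularity statement plus the usual arguments to control the infinitely many variables through the Ornstein--Uhlenbeck structure), so all the integrations by parts below are legitimate. This reduces matters to proving \eqref{e25} with a constant independent of the regularization, and then invoking the closedness of the operators $D$, $(D,D^2)$, $Q^{-1/2}D$ in $L^2(X,\nu)$ together with the weak lower semicontinuity of the $L^2$ norms to pass to the limit; the Friedrichs-strong-solution assertion is then exactly the statement that $(u_n)$ is such an approximating sequence, so it comes for free from the construction.

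The heart of the matter is the a priori estimate \eqref{e25}, which I would derive by the Bochner--Bakry--Émery type identity, i.e. differentiating the equation. Write $\mathcal{K}u = \frac12\Tr[D^2u] - \langle Ax+DU(x),Du\rangle$ and apply $D_h$ to $\lambda u - \mathcal{K}u = f$. One gets $\lambda D_h u - \mathcal{K}(D_h u) + \frac{1}{2\lambda_h}D_h u + \langle D_h DU, Du\rangle = D_h f$ (using $A e_h = -\frac{1}{2\lambda_h}e_h$ with the normalization $Q = -\frac12 A^{-1}$, so $\lambda_h$ here plays the role of the eigenvalue of $Q$). Multiply by $D_h u$, integrate against $\nu$, sum over $h$, and integrate by parts in the second-order term: the key point is that $-\int_X \mathcal{K}(D_h u)\, D_h u\, d\nu = \frac12\int_X \|D(D_h u)\|^2 d\nu$ by definition of $K$ as the form operator. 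Summing over $h$ converts $\sum_h \|D D_h u\|^2$ into $\Tr[(D^2u)^2] = \|D^2u\|_{\mathcal{L}_2}^2$. The term $\sum_h \frac{1}{2\lambda_h}(D_h u)^2 = \|Q^{-1/2}Du\|^2$ produces precisely the third term on the left of \eqref{e25}. The convexity of $U$ enters here in an essential way: $\sum_h \langle D_h DU, Du\rangle D_h u = \langle D^2U\, Du, Du\rangle \geq 0$, so this term can be dropped from the left-hand side. Combining with the zeroth-order estimate \eqref{diss} (which gives $\lambda\int u^2 + \frac12\int\|Du\|^2 \leq \frac1\lambda\int f^2$) and handling the right-hand side $\sum_h \int D_h f\, D_h u\, d\nu = \int\langle Df, Du\rangle d\nu$ by Cauchy--Schwarz and absorption yields \eqref{e25} with the stated constant $4$.

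The main obstacle is making the differentiation step rigorous without assuming more regularity than we have, and in particular controlling the infinitely many tail variables so that the formal sum over $h$ actually converges and the integrations by parts are justified. In the cylindrical approximation this is a genuine but finite-dimensional computation for the $n$ leading variables; the contribution of the remaining variables is governed by the fact that $u$ depends on them only through the Gaussian part, and one checks the relevant sums are dominated by $\int f^2 d\nu$ uniformly in $n$. A secondary subtlety is that for \eqref{diss} and for the form-operator identity one must know that $D_h u$ (resp.\ cutoff versions of it) is an admissible test function in $W^{1,2}(X,\nu)$; this is where Hypothesis \ref{Hyp} and the integration-by-parts formula \eqref{partinuX} are used, to ensure $\mathcal{F}\mathcal{C}^1_b(X)$ is dense and the weighted Sobolev calculus behaves. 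Since the statement explicitly attributes these facts to \cite{DPL0}, I would structure the write-up as: (1) reduce to smooth regularized data citing \cite{DPL0} for the approximation scheme; (2) carry out the Bochner identity and the estimate on the regularized level; (3) pass to the limit using reflexivity/closedness from Proposition \ref{varie} and the preceding lemmas.
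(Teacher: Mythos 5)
You should first be aware that the paper does not actually prove Theorem \ref{stimeLip}: Section 2.2 opens with ``Here we report some results from \cite{DPL0}'', and the theorem is imported from that companion paper without proof. So there is no internal argument to compare against line by line. That said, the strategy you outline --- regularize $U$ and $f$, differentiate the equation in each direction $e_h$, use the form identity to turn $-\int \mathcal{K}(D_hu)\,D_hu\,d\nu$ into $\frac12\int\|D(D_hu)\|^2d\nu$, use convexity to discard $\int\langle D^2U\,Du,Du\rangle\,d\nu\ge 0$, and pass to the limit via the closedness of $D$, $(D,D^2)$, $Q^{-1/2}D$ and weak lower semicontinuity --- is exactly the scheme of \cite{DPL0} and of its finite-dimensional antecedent \cite{DPL}, and your commutator identity $\lambda D_hu-\mathcal{K}(D_hu)+\frac{1}{2\lambda_h}D_hu+\langle D_hDU,Du\rangle=D_hf$ is correct.

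There is, however, one step that fails as written: the treatment of the right-hand side. You propose to handle $\sum_h\int_X D_hf\,D_hu\,d\nu=\int_X\langle Df,Du\rangle\,d\nu$ ``by Cauchy--Schwarz and absorption''. Cauchy--Schwarz gives $\int\langle Df,Du\rangle\,d\nu\le\eps\int\|Du\|^2d\nu+\frac{1}{4\eps}\int\|Df\|^2d\nu$; the first term can be absorbed into $\lambda\int\|Du\|^2d\nu$, but the second is controlled by $\|f\|^2_{W^{1,2}(X,\nu)}$, not by $\int_X f^2d\nu$, and it does not remain bounded when the regularized data converge to a general $f\in L^2(X,\nu)$. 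Since \eqref{e25} must close with only $4\int_X f^2d\nu$ on the right, the correct move is to undo the integration by parts on this term: by the form identity, $\frac12\int_X\langle Df,Du\rangle\,d\nu=-\int_X f\,\mathcal{K}u\,d\nu=-\int_X f(\lambda u-f)\,d\nu\le\int_X f^2d\nu+\lambda\|f\|_{L^2(X,\nu)}\|u\|_{L^2(X,\nu)}\le 2\int_X f^2d\nu$, the last step using the zeroth-order bound $\|u\|_{L^2(X,\nu)}\le\lambda^{-1}\|f\|_{L^2(X,\nu)}$ from \eqref{diss}; this is also where the constant $4$ comes from. A smaller slip: with $Ae_h=-\frac{1}{2\lambda_h}e_h$ one has $\sum_h\frac{1}{2\lambda_h}(D_hu)^2=\frac12\|Q^{-1/2}Du\|^2$, not $\|Q^{-1/2}Du\|^2$, so the factors of $2$ need to be tracked if you want the coefficients exactly as displayed in \eqref{e25}. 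Finally, note that the Friedrichs assertion does not come entirely ``for free'': the solutions of the regularized problems are not themselves cylindrical, so an extra approximation of each regularized solution by $\mathcal{F}\mathcal{C}^2_b(X)$ functions (as in Remark \ref{Rem:1}) is still required.
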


\begin{Remark}
\label{Rem:1}
{\em 
Note that estimates \eqref{diss} and \eqref{e25} imply that the above mentioned sequence of cylindrical functions $(u_n)$ converge to $u$ in $W^{2,2}(X, \nu) $ $\cap$ $W^{1,2}_{-1/2}(X, \nu)$. Indeed, it is sufficient to set $\lambda u_n - {\mathcal K} u_n = f_n$, and to use \eqref{diss} and \eqref{e25} with $u$ replaced by $u-u_n$ and $f$ replaced by $f-f_n$. }
 \end{Remark}


\section{The Neumann problem}
\label{sect_Neumann}


Throughout this section $U$ satisfies Hypothesis \ref{Hyp}. 
Moreover, $G:X\mapsto \R$   satisfies the assumptions of Lemma \ref{Le:chiusura} and $\C = G^{-1}((-\infty, 0])$ is a closed convex set. 
Given any $\lambda>0$ and $f\in L^2(\C, \nu)$, a function $u\in W^{1,2}(\C, \nu)$ is a weak solution to \eqref{Neumann} if 
\begin{equation}
\label{debole}
\lambda\int_{\C}  u\,\varphi\,d\nu+\frac12\int_{\C} \langle Du,D\varphi\rangle\,d\nu = \int_{\C}f\,\varphi\,d\nu,\quad \forall\varphi\in W^{1,2}(\C,\nu).
 \end{equation}
Since the restrictions to $\C$ of the functions in $ {\mathcal F}{\mathcal C}^1_b(X)$ are dense in  $W^{1,2}(\C,\nu)$, it is enough that the above equality is satisfied for  every $\varphi\in {\mathcal F}{\mathcal C}^1_b(X)$. 

Existence and uniqueness of a weak solution is an easy consequence of the Lax--Milgram Lemma. As in the case of the whole space, taking $\varphi= u$ in \eqref{debole} we obtain
\begin{equation}
\label{dissC}
\lambda\int_{\C} u^2d\nu +\frac12\int_{\C} \|Du\|^2d\nu \leq 
 \frac{1}{\lambda} \int_{\C} f^2d\nu  .
\end{equation}

\subsection{Regularity of weak solutions}
\label{Regularity}

Here we use the results of \S \ref{ellittica}  to study Sobolev regularity of the weak solutions to  \eqref{Neumann}.  
We approach the  problem in $\C$ by penalized problems in the whole space $X$, replacing $U$ by 
\begin{equation}
\label{Valpha}
V_{\alpha}(x):= U_{\alpha}(x)  + \frac{1}{2\alpha} \dist (x, \C)^2
\end{equation}
where $\alpha >0$ and $U_{\alpha}$ are  the approximations of $U$ given by Hypothesis \ref{Hyp}.

The corresponding Kolmogorov operator ${\mathcal K}_{\alpha} $ is defined on the smooth cylindrical functions by 
$${\mathcal K}_{\alpha}  u(x) = {{\mathcal L}}u(x) - \langle DU_{\alpha}(x), Du(x)\rangle - \frac{1}{\alpha} 
 \langle x-\Pi_{ \C}(x), Du(x)\rangle , \quad x\in X, $$
where $\Pi_{ \C}(x)$ is the unique element of $\C$ with minimal distance from $x$. $\Pi_{ \C}(x)$ is called ``projection of $x$ on $ \C$".

Since the function $x\mapsto DV_{\alpha}(x) = DU_{\alpha}(x)+ \frac{1}{\alpha} 
(x-\Pi_{\overline{\C}}(x))$ is Lipschitz continuous, the results of \S \ref{ellittica} may be applied. In particular, for every $\lambda >0$ and  $f\in C_b(X)$  the problem 
\begin{equation}
\label{Kappaalpha}
\lambda u_{\alpha} - {\mathcal K}_{\alpha}  u_{\alpha} = f
\end{equation}
has a unique weak solution $u_{\alpha}\in W^{1,2}(X, \nu_{\alpha})$, where
\begin{equation}
\label{nualpha}
\nu_{\alpha}(dx) :=   \exp(-2V_{\alpha}(x) )\mu(dx) .
\end{equation}
By Theorem  \ref{stimeLip}, $u_{\alpha}\in W^{2,2}(X, \nu_{\alpha})\subset W^{2,2}(X, \nu )$, and  estimate \eqref{e25} implies
\begin{equation}
\label{eq:a0}
 \begin{array}{l}
\ds\lambda\int_X \| Du_{\alpha} \|^2d\nu_{\alpha}  +\frac12\int_X \quad\Tr\;[ (D^2u_{\alpha} )^2]d\nu_{\alpha}  

\\
\\
 \ds +\int_X\|Q^{-1/2 }Du_{\alpha} \|^2d\nu_{\alpha}  +\int_X \langle D^2U_{\alpha}   Du_{\alpha} , Du_{\alpha}  \rangle d\nu_{\alpha}  \le 4\int_X f^2d\nu_{\alpha}   .
\end{array} 
\end{equation}
Taking into account that $U_{\alpha}\leq U$ and that $V_{\alpha} \geq C$ for each $\alpha$, from \eqref{diss} and \eqref{eq:a0} 
we obtain 
$$\|u_{\alpha}\|^2_{W^{2,2}(\C, \nu)} \leq \bigg( \frac{1}{\lambda^2} +  \frac{4}{\lambda} + 8\bigg) 
 \int_X f^2 d\nu_{\alpha} \leq \bigg( \frac{1}{\lambda^2} +  \frac{4}{\lambda} + 8\bigg)\|f\|_{\infty}^2e^{-2C}, $$
$$ \int_{\C} \|Q^{-1/2}Du_{\alpha}\|^2\,d\nu \leq 4\|f\|_{\infty}^2e^{-2 C}, $$
so that the restrictions of $u_{\alpha}$ to $\C$ are bounded in $W^{2,2}(\C, \nu)$ and in $W^{1,2}_{-1/2}(\C, \nu)$. A sequence $(u_{\alpha_{n|\C}})$ converges weakly to a function $u$ in $W^{2,2}(\C, \nu)$ and in $W^{1,2}_{-1/2}(\C, \nu)$.

\begin{Proposition}
\label{Pr:weak}
Let $f\in C_b(X)$ and let $\alpha_n \to 0$ be such that $(u_{\alpha_{n|\C}})$ converges weakly to a function $u$  in $W^{2,2}(\C, \nu)$ and in $W^{1,2}_{-1/2}(\C, \nu)$. 
Then $(u_{\alpha_{n|\C}})$ converges strongly  to   $u$ in $W^{1,2}(\C, \nu)$, moreover 
$u$ is the weak solution to \eqref{Neumann} and it satisfies
\begin{equation}
\label{stimaNeu}
\frac{1}{2} \int_{\C} \Tr\;[(D^2u )^2]\,d\nu +\int_{\C} \|Q^{-1/2}Du \|^2 d\nu 
\leq 4\|f\|_{L^2(\C, \nu)}^2 .
\end{equation}
\end{Proposition}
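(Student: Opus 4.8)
The plan is to pass to the limit $n\to\infty$ in the weak formulation of the penalized equations \eqref{Kappaalpha}, using the uniform bounds already established, and then to deduce \eqref{stimaNeu} from lower semicontinuity. As a preliminary, since $\C$ is closed we have $\dist(x,\C)>0$ for $x\notin\C$, so $e^{-2V_{\alpha_n}(x)}=e^{-2U_{\alpha_n}(x)}\exp(-\dist(x,\C)^2/\alpha_n)\to e^{-2U(x)}\one_{\C}(x)$ for a.e. $x$ — here I first pass to a further subsequence along which $U_{\alpha_n}\to U$ a.e., which is legitimate because Hypothesis \ref{Hyp}(iii) gives convergence in $L^{p_0}(X,\mu)$ — while $e^{-2V_{\alpha_n}}\le e^{-2C}$ by Hypothesis \ref{Hyp}(ii). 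By dominated convergence it follows that $\nu_{\alpha_n}(X\setminus\C)\to0$, that $\int_{\C}(e^{-2U_{\alpha_n}}-e^{-2U})\,d\mu\to0$ (the integrand is $\ge0$ since $U_{\alpha_n}\le U$), and that $\int_X\psi\,d\nu_{\alpha_n}\to\int_{\C}\psi\,d\nu$ for every bounded measurable $\psi$.

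To prove that $u$ is the weak solution, I would test \eqref{Kappaalpha} against an arbitrary $\varphi\in {\mathcal F}{\mathcal C}^1_b(X)$ and split every integral over $X$ into its part over $\C$ and its part over $X\setminus\C$. The parts over $X\setminus\C$ tend to $0$: by Cauchy--Schwarz each is bounded by $\|\varphi\|_\infty$ (resp. $\|D\varphi\|_\infty$) times $\nu_{\alpha_n}(X\setminus\C)^{1/2}$ times the $L^2(X,\nu_{\alpha_n})$ norm of $u_{\alpha_n}$ or $Du_{\alpha_n}$, and these norms are uniformly bounded by \eqref{diss} and \eqref{eq:a0}. On $\C$ we have $\nu_{\alpha_n}=e^{-2U_{\alpha_n}}\mu$, so I write $d\nu_{\alpha_n}=d\nu+(e^{-2U_{\alpha_n}}-e^{-2U})\,d\mu$ there; the $d\nu$-parts converge to $\lambda\int_{\C}u\varphi\,d\nu+\frac12\int_{\C}\langle Du,D\varphi\rangle\,d\nu$ by the weak convergences $u_{\alpha_n}|_{\C}\rightharpoonup u$ and $Du_{\alpha_n}|_{\C}\rightharpoonup Du$ in $L^2(\C,\nu)$ (consequences of weak convergence in $W^{2,2}(\C,\nu)$), while the remainder terms vanish because, for instance, $|\int_{\C}\langle Du_{\alpha_n},D\varphi\rangle(e^{-2U_{\alpha_n}}-e^{-2U})\,d\mu|\le\|D\varphi\|_\infty\big(\int_{\C}\|Du_{\alpha_n}\|^2\,d\nu_{\alpha_n}\big)^{1/2}\big(\int_{\C}(e^{-2U_{\alpha_n}}-e^{-2U})\,d\mu\big)^{1/2}$, whose first factor is uniformly bounded and second factor tends to $0$; the zeroth-order remainder is identical. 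Since $\int_X f\varphi\,d\nu_{\alpha_n}\to\int_{\C}f\varphi\,d\nu$ by dominated convergence, passing to the limit shows that $u$ satisfies \eqref{debole} for every $\varphi\in {\mathcal F}{\mathcal C}^1_b(X)$, hence for all $\varphi\in W^{1,2}(\C,\nu)$ by density, so by uniqueness $u$ is the weak solution.

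For the strong convergence in $W^{1,2}(\C,\nu)$ I would use the energy identities. Taking $\varphi=u_{\alpha_n}$ in \eqref{Kappaalpha} gives $\lambda\int_X u_{\alpha_n}^2\,d\nu_{\alpha_n}+\frac12\int_X\|Du_{\alpha_n}\|^2\,d\nu_{\alpha_n}=\int_X f\,u_{\alpha_n}\,d\nu_{\alpha_n}$, and the splitting argument above (now using $u_{\alpha_n}|_{\C}\rightharpoonup u$ in $L^2(\C,\nu)$ and $f\in L^2(\C,\nu)$) shows the right-hand side tends to $\int_{\C}fu\,d\nu$, which equals $\lambda\int_{\C}u^2\,d\nu+\frac12\int_{\C}\|Du\|^2\,d\nu$ by the weak formulation for $u$ tested with $u$ itself. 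On the other hand, since $\nu_{\alpha_n}\ge\nu$ on $\C$ and the integrands are nonnegative, $\int_X u_{\alpha_n}^2\,d\nu_{\alpha_n}\ge\int_{\C}u_{\alpha_n}^2\,d\nu$ and $\int_X\|Du_{\alpha_n}\|^2\,d\nu_{\alpha_n}\ge\int_{\C}\|Du_{\alpha_n}\|^2\,d\nu$, so weak lower semicontinuity forces the $\liminf$ of each of these to be at least the corresponding quantity for $u$; comparing with the previous line, each $\liminf$ is in fact a limit equal to that quantity, i.e. $\|u_{\alpha_n}|_{\C}\|_{W^{1,2}(\C,\nu)}\to\|u\|_{W^{1,2}(\C,\nu)}$. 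Convergence of norms plus weak convergence in the Hilbert space $W^{1,2}(\C,\nu)$ gives strong convergence, first along the chosen subsequence and then, since $u$ is the unique weak solution and hence independent of the subsequence, along the full sequence by the standard subsequence argument.

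Finally, for \eqref{stimaNeu} I start from \eqref{eq:a0}, discard the two nonnegative terms $\lambda\int_X\|Du_{\alpha_n}\|^2\,d\nu_{\alpha_n}$ and $\int_X\langle D^2U_{\alpha_n}\,Du_{\alpha_n},Du_{\alpha_n}\rangle\,d\nu_{\alpha_n}$ (the latter $\ge0$ by convexity of $U_{\alpha_n}$), and use once more $\nu_{\alpha_n}\ge\nu$ on $\C$ to get $\frac12\int_{\C}\Tr[(D^2u_{\alpha_n})^2]\,d\nu+\int_{\C}\|Q^{-1/2}Du_{\alpha_n}\|^2\,d\nu\le4\int_X f^2\,d\nu_{\alpha_n}$. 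The right-hand side tends to $4\|f\|_{L^2(\C,\nu)}^2$, while the left-hand side is lower semicontinuous along the weak convergences $u_{\alpha_n}|_{\C}\rightharpoonup u$ in $W^{2,2}(\C,\nu)$ and in $W^{1,2}_{-1/2}(\C,\nu)$ (which give $D^2u_{\alpha_n}\rightharpoonup D^2u$ in $L^2(\C,\nu;{\mathcal L}_2(X))$ and $Q^{-1/2}Du_{\alpha_n}\rightharpoonup Q^{-1/2}Du$ in $L^2(\C,\nu;X)$), so \eqref{stimaNeu} follows by taking $\liminf$. The main obstacle throughout is that the reference measure $\nu_{\alpha_n}$ genuinely differs from $\nu$, even restricted to $\C$ (where $U_{\alpha_n}\ne U$), whereas the available compactness is only with respect to the fixed measure $\nu$; the device that resolves this is the sign $e^{-2U_{\alpha_n}}-e^{-2U}\ge0$ together with the uniform bounds on $\|u_{\alpha_n}\|_{L^2(X,\nu_{\alpha_n})}$ and $\|Du_{\alpha_n}\|_{L^2(X,\nu_{\alpha_n})}$, which controls every measure-discrepancy remainder by a bounded quantity times $\big(\int_{\C}(e^{-2U_{\alpha_n}}-e^{-2U})\,d\mu\big)^{1/2}$, without ever dividing by the possibly tiny weight $e^{-2U}$.
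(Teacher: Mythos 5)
Your proof is correct and follows essentially the same route as the paper: penalized problems, splitting all integrals into their parts over $\C$ (handled by weak convergence in $L^2(\C,\nu)$ plus the sign $e^{-2U_{\alpha_n}}-e^{-2U}\geq 0$ to control the measure discrepancy) and over $\C^c$ (killed by $\int_{\C^c}e^{-2V_{\alpha_n}}d\mu\to 0$), followed by weak lower semicontinuity for \eqref{stimaNeu}. The only difference is that you write out explicitly the two steps the paper delegates to \cite{DPL0} — the convergence of the integrals over $\C$ and the energy-identity argument giving norm convergence, hence strong convergence in $W^{1,2}(\C,\nu)$ — and both are carried out correctly.
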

\begin{proof}
Since $\C $ is closed, $\dist (x, \C)>0$ for every $x\in \C^c$. Therefore, by dominated convergence, 
\begin{equation}
\label{convmis}
\limsup_{n\to\infty}  \int_{\C^c} e^{-2V_{\alpha_n}}d\mu \leq e^{-2C} \lim_{n\to \infty} \int_{\C^c} e^{-2\dist (x, \C)^2/{\alpha_n}}d\mu = 0.
\end{equation}

Let $\varphi\in {\mathcal F}{\mathcal C}^1_b(X)$, $n\in \N $. Then 
\begin{equation}
\label{debolen}
\lambda\int_{X}  u_{\alpha_n}\,\varphi\,d\nu_{\alpha_n}+\frac12\int_{X} \langle Du_{\alpha_n},D\varphi\rangle\,d\nu_{\alpha_n} = \int_{X}f\,\varphi\,d\nu_{\alpha_n} .
\end{equation}
The right hand side is splitted as the sum of an integral over $\C$ and an integral over $\C^c$. We have
$$\lim_{n\to \infty}  \int_{\C} f\,\varphi \,e^{-2U_{\alpha_n}}d\mu =  \int_{\C} f\,\varphi \,e^{-2U }d\mu $$
by dominated convergence, and  
$$\int_{\C^c} f\,\varphi \,e^{-2V_{\alpha_n}}d\mu \leq  \|f\|_{\infty}
\|\varphi\|_{\infty} \int_{\C^c} e^{-2V_{\alpha_n}}d\mu $$ 
that vanishes as $n\to \infty$, by \eqref{convmis}. So, the right hand side of \eqref{debolen} goes to $\int_{\C} f\,\varphi \,e^{-2U }d\mu  $  as $n \to \infty$. 

The integrals in the left hand side too are splitted as integrals over $\C$ and integrals over $\C^c$. Concerning the integrals over $\C$, arguing as in \cite[proof of Thm. 3.7]{DPL0} we obtain
$$\lim_{n\to \infty}  \int_{\C} (\lambda u_{\alpha_n}\,\varphi\ +\frac12  \langle Du_{\alpha_n},D\varphi\rangle) e^{-2U_{\alpha_n}}d\mu =  \int_{\C} (\lambda u \,\varphi\ +\frac12  \langle Du ,D\varphi\rangle) e^{-2U }d\mu .$$
Concerning  the integrals over $\C^c$, by the H\"older inequality we get
$$
\begin{array}{l}
\ds \bigg|\int_{\C ^c}  (\lambda u_{\alpha_n}\,\varphi\ +\frac12  \langle Du_{\alpha_n},D\varphi\rangle) e^{-2V_{\alpha_n}}d\mu \bigg|  
\\
\\
\ds \leq
\bigg( \int_{\C ^c}  (\lambda u_{\alpha_n}\,\varphi\ +\frac12  \langle Du_{\alpha_n},D\varphi\rangle)^2 e^{-2V_{\alpha_n}}d\mu\bigg)^{1/2 } \bigg( \int_{\C ^c} e^{-2V_{\alpha_n}}d\mu\bigg)^{1/2 }
\\
\\
\leq \ds 
  \|\varphi\|_{C^1_b(X)} (\|\lambda u_{\alpha_n}\|_{L^2(X, \nu_{\alpha_n})} + \frac{1}{2} 
\| \,\|Du_{\alpha_n}\|\, \|_{L^2(X, \nu_{\alpha_n})}) \bigg( \int_{\C ^c} e^{-2V_{\alpha_n}}d\mu\bigg)^{1/2 }
\end{array}$$
where $\|\lambda u_{\alpha_n}\|_{L^2(X, \nu_{\alpha_n})} $ and $\| \,\|Du_{\alpha_n}\|\, \|_{L^2(X, \nu_{\alpha_n})}$ are bounded by a constant independent of $n$ by \eqref{eq:a0}, and $ \int_{\C ^c} e^{-2V_{\alpha_n}}d\mu$ vanishes as $n\to \infty$ by \eqref{convmis}. 

Putting everything together and letting $n\to \infty$ in \eqref{debolen} we get 
$$\lambda\int_{\C}  u \,\varphi\,d\nu +\frac12\int_{\C} \langle Du ,D\varphi\rangle\,d\nu  = \int_{\C}f\,\varphi\,d\nu , $$
that is, $u$ is a weak solution to \eqref{Neumann}. Now, the argument of \cite[Lemma 3.8]{DPL0} shows that $u_{\alpha_{n|\C}}$ converges to $u$ in 
$W^{1,2}(\C, \nu)$. 

It remains to prove that $u$ satisfies \eqref{stimaNeu}. Since $u_{\alpha_{n|\C}}$ converges weakly to $u$ in $W^{2,2}(\C, \nu)$ and in $W^{1,2}_{-1/2}(\C, \nu)$, then
$$\frac{1}{2} \int_{\C} ( \Tr\;[(D^2u )^2]  + \|Q^{-1/2}Du \|^2) d\nu  
 \leq \limsup_{n\to \infty} 
\frac{1}{2} \int_{\C} ( \Tr\;[(D^2u_{\alpha_n})^2]  +  \|Q^{-1/2}Du_{\alpha_n} \|^2 )d\nu $$
$$\leq \limsup_{n\to \infty} 
\frac{1}{2} \int_{\C} ( \Tr\;[(D^2u_{\alpha_n})^2]  +  \|Q^{-1/2}Du_{\alpha_n} \|^2 ) d\nu_{\alpha_n} 
 \leq \limsup_{n\to \infty} 
4 \int_{X}f^2\,d\nu_{\alpha_n} .$$
Here we have used \eqref{e25} in the last inequality, and $U_{\alpha}\leq U$ in the last but one inequality. 

We already know that the integral $ \int_{X}f^2\,d\nu_{\alpha_n}$ goes to $ \int_{\C}f^2\,e^{-2U}d\mu$,   as $n\to \infty$. Then, \eqref{stimaNeu} follows. 
\end{proof}

\begin{Corollary}
\label{Cor:u}
For every $\lambda >0$ and $f\in L^2(\C, \nu)$ the weak solution $u$ of \eqref{Neumann} belongs to $W^{2,2}(\C, \nu)\cap W^{1,2}_{-1/2}(\C, \nu)$ and it satisfies \eqref{stimaNeu}. 
\end{Corollary}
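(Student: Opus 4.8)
The plan is to reduce the general case to Proposition \ref{Pr:weak}, which already yields the statement when $f\in C_b(X)$, by a density argument that exploits the linearity of \eqref{Neumann} together with the a priori bounds \eqref{dissC} and \eqref{stimaNeu}.

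First I would fix $f\in L^2(\C,\nu)$ and choose a sequence $(f_n)\subset C_b(X)$ whose restrictions to $\C$ converge to $f$ in $L^2(\C,\nu)$; this is legitimate because the restrictions to $\C$ of bounded continuous functions on $X$ are dense in $L^2(\C,\nu)$, a fact already used in the proof of Lemma \ref{Le:chiusura}. Let $u_n\in W^{1,2}(\C,\nu)$ be the weak solution of \eqref{Neumann} with datum $f_n$. By Proposition \ref{Pr:weak} we have $u_n\in W^{2,2}(\C,\nu)\cap W^{1,2}_{-1/2}(\C,\nu)$, and $u_n$ satisfies \eqref{stimaNeu} with $f_n$ in place of $f$.

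Next I would use linearity: the difference $u_n-u_m$ is the weak solution of \eqref{Neumann} with datum $f_n-f_m$, so applying \eqref{dissC} and \eqref{stimaNeu} to it and recalling that the squared $W^{2,2}(\C,\nu)$ norm of a function $w$ is $\int_\C(w^2+\|Dw\|^2+\Tr[(D^2w)^2])\,d\nu$ while its squared $W^{1,2}_{-1/2}(\C,\nu)$ norm is $\int_\C(w^2+\|Q^{-1/2}Dw\|^2)\,d\nu$, one obtains $\|u_n-u_m\|_{W^{2,2}(\C,\nu)}^2+\|u_n-u_m\|_{W^{1,2}_{-1/2}(\C,\nu)}^2\le c(\lambda)\,\|f_n-f_m\|_{L^2(\C,\nu)}^2$. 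Hence $(u_n)$ is Cauchy in $W^{2,2}(\C,\nu)\cap W^{1,2}_{-1/2}(\C,\nu)$ and converges there to some $v$ in this intersection. In particular $u_n\to v$ in $W^{1,2}(\C,\nu)$, so one may pass to the limit in the weak identity \eqref{debole} written for $u_n$ and $f_n$ (the first and third terms converge because $u_n\to v$ and $f_n\to f$ in $L^2(\C,\nu)$, the second because $Du_n\to Dv$ in $L^2(\C,\nu;X)$), which shows that $v$ is a weak solution of \eqref{Neumann} with datum $f$. By uniqueness of weak solutions $v=u$, and therefore $u\in W^{2,2}(\C,\nu)\cap W^{1,2}_{-1/2}(\C,\nu)$.

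Finally, \eqref{stimaNeu} for $u$ follows by letting $n\to\infty$ in the same inequality written for $u_n$: the left-hand side converges because $D^2u_n\to D^2u$ in $L^2(\C,\nu;{\mathcal L}_2(X))$ and $Q^{-1/2}Du_n\to Q^{-1/2}Du$ in $L^2(\C,\nu;X)$, and the right-hand side converges because $f_n\to f$ in $L^2(\C,\nu)$. No step here is genuinely hard; the only points worth a line of justification are the density of $C_b(X)_{|\C}$ in $L^2(\C,\nu)$ and the remark that, by \eqref{dissC} and \eqref{stimaNeu}, the $W^{2,2}$ and $W^{1,2}_{-1/2}$ norms of a solution are controlled by the $L^2$ norm of its datum — both already available in the excerpt.
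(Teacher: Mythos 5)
Your argument is correct and is essentially the paper's own proof: approximate $f$ by restrictions of $C_b(X)$ functions, apply Proposition \ref{Pr:weak} to each approximant, use linearity together with \eqref{dissC} and \eqref{stimaNeu} on differences to get a Cauchy sequence in $W^{2,2}(\C,\nu)\cap W^{1,2}_{-1/2}(\C,\nu)$, and pass to the limit. The only cosmetic difference is that you identify the limit with $u$ via the weak formulation, whereas the paper gets the convergence $u_n\to u$ in $W^{1,2}(\C,\nu)$ directly from \eqref{dissC} applied to $u_n-u$; both routes are fine.
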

\begin{proof}
Let $(f_n)$ be a sequence of functions in $C_b(X)$ that converge to the null extension of $f$ to $X$ in $L^2(X, \nu)$. By estimate \eqref{dissC}, the corresponding weak solutions to \eqref{Neumann} with $f$ replaced by $f_{n|\C}$ converge to $u$ in $W^{1,2}(\C, \nu)$, and by estimate \eqref{stimaNeu} they are a Cauchy sequence also in $W^{2,2}(\C, \nu)$ and in $W^{1,2}_{-1/2}(\C, \nu)$, so that $u\in W^{2,2}(\C, \nu)$ $\cap$ $W^{1,2}_{-1/2}(\C, \nu)$ and it satisfies \eqref{stimaNeu}. 
\end{proof}

\subsection{The Neumann boundary condition}
\label{Neu_bordo}

Here we show that the weak solution to  problem \eqref{Neumann} satisfies the Neumann condition $\langle Du, DG\rangle =0$ at $G^{-1}(0)$, in the sense of the traces of Sobolev functions. We need further assumptions on $G$.  
\begin{Hypothesis}
\label{HypG} 
$G:X\mapsto \R$ is a $C_{2,p}$-quasicontinuous function for every $p>1$, and
\begin{itemize}
\item[(i)] $G\in  \bigcap_{p>1} W^{2,p}_{1/2}(X, \mu), \quad {\ds \frac{1}{\|Q^{1/2}DG\|}} \in \bigcap_{p>1} L^p(X, \mu); $
\item[(ii)] $G\in  \bigcap_{p>1} W^{1,p}(X, \mu) .$
\end{itemize}
Moreover, $\C: = G^{-1}((-\infty, 0])$ is a closed convex nonempty set. 
\end{Hypothesis}

Let us recall that a function $G$  is called $C_{2,p}$-quasicontinuous if for each $\eps >0$ there is an open set $A\subset X$ such that $C_{2,p}(A)$  $\leq \eps$ and $G_{|X\setminus A}$ is continuous. Here $C_{2,p}(A)$ denotes the usual Gaussian capacity of order $(2,p)$, see \cite[Sect. 5.9]{Boga}. 
Every element  of $\cap_{p>1}W^{2,p}_{1/2}(X, \mu)$ has a version which is $C_{2,p}$-quasicontinuous   for every $p>1$.

Assumption (i) coincides with  the hypotheses of \cite{Tracce}, where it was shown that the elements of $W^{1,q}_{1/2} (\mathcal C, \mu)$ with $q>1$ have traces at $G^{-1}(0)$, as well as at the other level sets of $G$. Such traces belong to $L^1(G^{-1}(0), \rho)$, where $\rho$ is the Hausdorff-Gauss measure of Feyel and De la Pradelle  \cite{FP}. 

Assumption (ii) will be used in Proposition  \ref{finale},  for the Neumann  condition $\langle Du, DG\rangle$ $ =$ $0$ be meaningful. To this aim one needs that $DG$ exists (at least, near $G^{-1}(0)$), and (i) is not sufficient.

The trace of any Sobolev function $u\in  W^{1,p}_{1/2} (\mathcal C, \mu)$ at $G^{-1}(0)$ is  denoted by $u_{|G^{-1}(0)}$.  It coincides $\rho$-a.e. with any $C_{1,p}$-quasicontinuous version of $u$, in particular if $u$ has a continuous version, its trace coincides $\rho$-a.e. with the restriction of $u$ to $G^{-1}(0)$. The trace operator $u\mapsto u_{|G^{-1}(0)}$ is bounded from $ W^{1,p}_{1/2} (X, \mu)$  to $L^q(G^{-1}(0)), \rho)$, for every $q\in [1, p)$. (Under further assumptions that are not needed here, it is bounded from $ W^{1,p}_{1/2} (X, \mu)$  to $L^p(G^{-1}(0)), \rho)$). 
The integration formula 
\begin{equation}
\label{parti}
\int_{\C}  D_ku  \,d\mu = \int_{\C} \frac{x_k}{\lambda_k}u\,d\mu + \int_{G^{-1}(0)} u_{|G^{-1}(0)}  \frac{D_kG}{\|Q^{1/2}DG\|} \,d\rho 
\end{equation}
holds for every $k\in \N$ and $u\in  W^{1,p}_{1/2} (X, \mu)$ for some $p>1$.  For the proofs of these statements and for other properties of traces we refer to  \cite{Tracce}. 

Moreover, if $u (x)\geq C$ for $\mu$-a.e. $x$, then $u_{|G^{-1}(0)}\geq C$, $\rho$-a.e. This is not immediate, since under Hypothesis \ref{HypG}, the set $G^{-1}(0)$ is $\mu$-negligible. However, this 
can be seen approaching $u$ in $W^{1,p}_{1/2} (\mathcal C, \mu)$ by a sequence of continuous functions $u_n \in W^{1,p}_{1/2} (\mathcal C, \mu)$ such that $u_n(x)\geq C$ for every $x\in C$ (for instance, we may take $u_n(x) = \max\{v_n(x), \,C\}$, where $(v_n)$ is any sequence of Lipschitz continuous functions approaching $u$ in $W^{1,p}_{1/2} (\mathcal C, \mu)$). Since $u_n(x)\geq C$ for every $x\in G^{-1}(0)$, and $u_{n|G^{-1}(0)}$ converges to $u_{|G^{-1}(0)}$ in $L^1(G^{-1}(0), \rho)$, the statement follows.

We shall show that the elements of $W^{1,p} (\C, \nu)$ with $p>p_0/(p_0-1) $ have traces at the boundary that belong to $L^1(G^{-1}(0), e^{-2U}d\rho)$. 
The weight $e^{-2U}$ is meaningful in the surface integrals, as the next lemma shows. 

\begin{Lemma}
\label{Le:U}
$U(x)<\infty$ for $\rho$-a.e. $x\in G^{-1}(0)$, and the trace of $\exp(-2U)$ coincides with $\exp(-2U_{|G^{-1}(0)})$ for $\rho$-a.e. $x\in G^{-1}(0)$. 
\end{Lemma}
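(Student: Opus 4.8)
The plan is to deduce both statements from the trace theory of \cite{Tracce} combined with the approximations provided by Hypothesis \ref{Hyp}. First I would note that $U\in W^{1,p_0}_{1/2}(X,\mu)$, since by Hypothesis \ref{Hyp}(iii) it is the limit in this space of the functions $U_\alpha$; consequently $U$ has a trace $U_{|G^{-1}(0)}$ at $G^{-1}(0)$, belonging to $L^q(G^{-1}(0),\rho)$ for every $q\in[1,p_0)$, and coinciding $\rho$-a.e. with (the restriction to $G^{-1}(0)$ of) a fixed $C_{1,p_0}$-quasicontinuous version of $U$. Being $\rho$-integrable, this trace is finite $\rho$-a.e., which is the first assertion. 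In the same way, as already observed right after Hypothesis \ref{Hyp}, $e^{-2U}\in W^{1,p_0}_{1/2}(X,\mu)$, so it has a trace $(e^{-2U})_{|G^{-1}(0)}\in L^q(G^{-1}(0),\rho)$; the remaining task is to identify this trace with $e^{-2U_{|G^{-1}(0)}}$.

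For that identification I would fix $q\in(1,p_0)$ and use that the trace operator is bounded from $W^{1,p_0}_{1/2}(X,\mu)$ into $L^q(G^{-1}(0),\rho)$. Applying it to the two convergent families $U_\alpha\to U$ and $e^{-2U_\alpha}\to e^{-2U}$ in $W^{1,p_0}_{1/2}(X,\mu)$ yields $U_{\alpha|G^{-1}(0)}\to U_{|G^{-1}(0)}$ and $(e^{-2U_\alpha})_{|G^{-1}(0)}\to(e^{-2U})_{|G^{-1}(0)}$ in $L^q(G^{-1}(0),\rho)$. Extracting a sequence $\alpha_n\to0$ along which both convergences hold $\rho$-a.e. on $G^{-1}(0)$, and using that each $U_\alpha$ is continuous on $X$ (it is everywhere differentiable), so that its trace is simply its pointwise restriction — and likewise for the continuous function $e^{-2U_\alpha}$ — I would obtain, for $\rho$-a.e. $x\in G^{-1}(0)$, that $U_{\alpha_n}(x)\to U_{|G^{-1}(0)}(x)$ (a finite real number) and $e^{-2U_{\alpha_n}(x)}\to(e^{-2U})_{|G^{-1}(0)}(x)$. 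Continuity of $t\mapsto e^{-2t}$ then forces $(e^{-2U})_{|G^{-1}(0)}(x)=e^{-2U_{|G^{-1}(0)}(x)}$ for $\rho$-a.e. $x$, completing the proof.

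The only subtle points are bookkeeping: one must interpret ``$U(x)$'' on the $\mu$-negligible set $G^{-1}(0)$ through a fixed $C_{1,p_0}$-quasicontinuous version of $U$ (equivalently, through the trace), and one must invoke the fact — available in \cite{Tracce} and recalled in this section — that the trace of a function possessing a continuous version coincides $\rho$-a.e. with its restriction. I do not expect a genuine obstacle here: once the $\rho$-a.e. convergence of the traces of $U_\alpha$ and of $e^{-2U_\alpha}$ is in hand, the conclusion is a one-line continuity argument.
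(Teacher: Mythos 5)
Your proof is correct and follows essentially the same route as the paper: both rest on the convergence $U_\alpha\to U$ and $e^{-2U_\alpha}\to e^{-2U}$ in $W^{1,p_0}_{1/2}(X,\mu)$, the continuity of the trace operator into $L^q(G^{-1}(0),\rho)$, and the identification of the traces of the continuous functions $U_\alpha$, $e^{-2U_\alpha}$ with their restrictions. The only (harmless) difference is in the last step: the paper transfers the $L^1$ convergence of $U_{\alpha|G^{-1}(0)}$ to that of $e^{-2U_{\alpha|G^{-1}(0)}}$ via the Lipschitz continuity of $\xi\mapsto e^{-2\xi}$ on $[C,+\infty)$ (using $U_{|G^{-1}(0)}\geq C$), whereas you pass to a $\rho$-a.e. convergent subsequence and use plain continuity of the exponential, which works just as well.
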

\begin{proof}
Since  $U\in W^{1,p_0}_{1/2} (X, \mu)$,  its trace at $G^{-1}(0)$ belongs to $L^1(G^{-1}(0), \rho)$ hence it cannot be equal to $+\infty$ in a set with positive measure. Let us show that  $ e^{-2U}_{|G^{-1}(0)}$ coincides with $ e^{-2U_{|G^{-1}(0)}}$ for $\rho$-a.e. $x\in G^{-1}(0)$.  
We already know that  $U_{\alpha}$ and $e^{-2U_{\alpha}}$ converge to $U$ and to $e^{-2U }$ in $W^{1,p_0}(X, \mu)$ as $\alpha \to 0$. 
Then, their traces at $G^{-1}(0)$ converge to   $U_{|G^{-1}(0)}$ and  of $(e^{-2U })_{|G^{-1}(0)}$ in $L^1(G^{-1}(0), \rho)$. 
$U_{\alpha}$ and $e^{-2U_{\alpha}}$ are continuous, their traces are just their restrictions at $G^{-1}(0)$, $\rho$-a.e. 
It remains to show that $e^{-2U_{\alpha |G^{-1}(0)}}$ converges to $e^{-2U_{  |G^{-1}(0)}}$ in $L^1(G^{-1}(0), \rho)$. To this aim we remark that 
$U_{|G^{-1}(0)}\geq C$, $\rho$-a.e.
Now, since $\xi\mapsto e^{-2\xi}$ is Lipschitz continuous in $[C, +\infty)$ and both $U_{\alpha |G^{-1}(0)}$ and $U_{|G^{-1}(0)}$ have values in $[C, +\infty)$, then $e^{-2U_{\alpha |G^{-1}(0)}}$ converges to $e^{-2U_{  |G^{-1}(0)}}$ in $L^1(G^{-1}(0), \rho)$, along the converging subsequence, and the statement follows. \end{proof}

\vspace{3mm}

As a first step we establish a formula similar to \eqref{parti}, that involves  the measure $\nu$ in $\C$ and the measure $e^{-2U}d\rho$ in $G^{-1}(0)$. 
 
\begin{Lemma}
\label{Le:partinu}
Let $u\in  W^{1,p}_{1/2}(\C, \nu)$, with $p> p_0/(p_0-1) $. Then for every $k\in \N$ we have
\begin{equation}
\label{partinu}
\int_{\C}  D_ku  \,e^{-2U }d\mu = \int_{\C}(2D_kU  + \frac{x_k}{\lambda_k})u\,e^{-2U }d\mu + \int_{G^{-1}(0)}  u_{ |G^{-1}(0)}  \frac{D_kG}{\|Q^{1/2}DG\|}\,e^{-2U_{|G^{-1}(0)}}d\rho .
\end{equation}
\end{Lemma}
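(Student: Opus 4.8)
The plan is to derive \eqref{partinu} from the known Gaussian formula \eqref{parti} by inserting the weight $e^{-2U}$ and then handling the product rule. More precisely, I would like to apply \eqref{parti} with $u$ replaced by $u\,e^{-2U}$; since $u\in W^{1,p}_{1/2}(\C,\nu)$ and $e^{-2U}\in W^{1,p_0}_{1/2}(X,\mu)$ is bounded, one expects $u\,e^{-2U}\in W^{1,q}_{1/2}(\C,\mu)$ for some $q>1$, whence \eqref{parti} yields
\begin{equation*}
\int_{\C} D_k(u\,e^{-2U})\,d\mu = \int_{\C}\frac{x_k}{\lambda_k}\,u\,e^{-2U}\,d\mu + \int_{G^{-1}(0)}(u\,e^{-2U})_{|G^{-1}(0)}\,\frac{D_kG}{\|Q^{1/2}DG\|}\,d\rho.
\end{equation*}
Expanding $D_k(u\,e^{-2U}) = (D_ku)\,e^{-2U} - 2(D_kU)\,u\,e^{-2U}$ on the left, moving the second term to the right, and using Lemma \ref{Le:U} to rewrite the trace of $u\,e^{-2U}$ as $u_{|G^{-1}(0)}\,e^{-2U_{|G^{-1}(0)}}$, gives exactly \eqref{partinu}.

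The main obstacle is justifying the two steps that are not purely formal: first, that $u\,e^{-2U}$ genuinely belongs to a Gaussian Sobolev space on $\C$ to which \eqref{parti} applies with the correct gradient $D_k(u\,e^{-2U})=(D_ku)e^{-2U}-2(D_kU)u\,e^{-2U}$; and second, that $(u\,e^{-2U})_{|G^{-1}(0)}=u_{|G^{-1}(0)}\,(e^{-2U})_{|G^{-1}(0)}$, i.e.\ that the trace operator is multiplicative on this product. The clean way to handle both at once is approximation: take $u_n\in{\mathcal F}{\mathcal C}^1_b(X)$ with $u_{n|\C}\to u$ in $W^{1,p}_{1/2}(\C,\nu)$, and recall from \S\ref{Sobolev spaces} that $e^{-2U_\alpha}\to e^{-2U}$ in $W^{1,p_0}_{1/2}(X,\mu)$ with $C\le U_\alpha\le U$ so that all the $e^{-2U_\alpha}$ (and $e^{-2U}$) are uniformly bounded and Lipschitz-controlled on $[C,+\infty)$. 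For each fixed $n,\alpha$ the product $u_n\,e^{-2U_\alpha}$ lies in $\bigcap_{r>1}W^{1,r}_{1/2}(X,\mu)$ and is cylindrical-times-smooth, so \eqref{parti} holds for it with the Leibniz gradient and with the trace of the product equal to the product of the (continuous) restrictions.

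Then I would pass to the limit. Fix $\alpha$ and let $n\to\infty$: by the $L^p(\nu)$ convergence $u_n\to u$ and $D_ku_n\to D_ku$ (and boundedness of $e^{-2U_\alpha}$ together with $x_k,D_kU\in L^r(X,\mu)$ for all $r$, plus Hölder), every interior integral converges, and by boundedness of the trace operator $W^{1,q}_{1/2}(\C,\mu)\to L^1(G^{-1}(0),\rho)$ the boundary integral converges; this establishes \eqref{partinu} with $U$ replaced by $U_\alpha$ and $e^{-2U_{|G^{-1}(0)}}$ replaced by $e^{-2U_{\alpha|G^{-1}(0)}}$. Finally let $\alpha\to0$: $D_kU_\alpha\to D_kU$ in $L^{p_0}(X,\mu)$ (Hypothesis \ref{Hyp}(iii) together with Lemma \ref{regU-Moreau}), $e^{-2U_\alpha}\to e^{-2U}$ in $W^{1,p_0}_{1/2}(X,\mu)$, and, by Lemma \ref{Le:U} and its proof, $e^{-2U_{\alpha|G^{-1}(0)}}\to e^{-2U_{|G^{-1}(0)}}$ in $L^1(G^{-1}(0),\rho)$ along a subsequence; combined with $u_{|G^{-1}(0)}\in L^1(G^{-1}(0),\rho)$ and the $L^\infty$ bounds, this lets all terms pass to the limit and yields \eqref{partinu}. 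The one point requiring a little care is the convergence of the boundary term $\int_{G^{-1}(0)}u_{|G^{-1}(0)}\,\frac{D_kG}{\|Q^{1/2}DG\|}\,e^{-2U_{\alpha|G^{-1}(0)}}d\rho$, which follows from dominated convergence once one knows $\frac{D_kG}{\|Q^{1/2}DG\|}$ is $\rho$-a.e.\ bounded (indeed $|D_kG|\le\lambda_k^{-1/2}\|Q^{1/2}DG\|$ pointwise) and the weights are uniformly bounded.
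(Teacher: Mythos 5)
Your overall strategy is exactly the paper's: the published proof consists of applying \eqref{parti} to $u\,e^{-2U}$, observing that this product lies in $W^{1,r}_{1/2}$ with $r=p_0p/(p_0+p)>1$ (which is where the hypothesis $p>p_0/(p_0-1)$ enters, matching your Hölder count), and that the trace of the product is the product of the traces. Your instinct to justify the Leibniz rule and the multiplicativity of the trace by approximation is sound, and your closing observations (boundedness of $D_kG/\|Q^{1/2}DG\|$ by $\lambda_k^{-1/2}$, convergence of the traces of $e^{-2U_{\alpha}}$ from Lemma \ref{Le:U}) are correct.

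However, the order of limits you choose creates a genuine gap. Fixing $\alpha$ and letting $n\to\infty$ first requires the interior integrals $\int_{\C}D_ku_n\,e^{-2U_{\alpha}}d\mu$, $\int_{\C}x_ku_n\,e^{-2U_{\alpha}}d\mu$, etc.\ to converge to the same expressions with $u$ in place of $u_n$. But you only control $u_n\to u$ and $D_ku_n\to D_ku$ in $L^p(\C,\nu)$, and the density of $e^{-2U_{\alpha}}d\mu$ with respect to $\nu=e^{-2U}d\mu$ is $e^{2(U-U_{\alpha})}\geq 1$, which is unbounded whenever $U$ is unbounded above (the relevant situation in the paper's examples); neither Hölder nor domination applies, and in fact the intermediate identity ``\eqref{partinu} with $U$ replaced by $U_{\alpha}$'' need not even have finite terms for a general $u\in W^{1,p}_{1/2}(\C,\nu)$, since $\int_{\C}|D_ku|\,e^{-2U_{\alpha}}d\mu=\int_{\C}|D_ku|\,e^{2(U-U_{\alpha})}d\nu$ may diverge. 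The repair is either to reverse the order — for each \emph{fixed} cylindrical $u_n$ let $\alpha\to 0$ first (everything is then bounded and the traces converge by Lemma \ref{Le:U}), obtaining \eqref{partinu} for $u_n$, and only afterwards let $n\to\infty$, so that all interior integrals are taken against $\nu$ and all boundary integrals against $e^{-2U_{|G^{-1}(0)}}d\rho$, where your Hölder estimates and Proposition \ref{Pr:traccia} do apply — or to drop $U_{\alpha}$ from the interior altogether: $u_n e^{-2U}\in W^{1,p_0}_{1/2}(X,\mu)$ directly (bounded $C^1_b$ times Sobolev), with Leibniz gradient and with trace $u_{n}\cdot e^{-2U_{|G^{-1}(0)}}$, and then $u_ne^{-2U}\to ue^{-2U}$ in $W^{1,r}_{1/2}(\C,\mu)$ follows from your own Hölder computation.
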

\begin{proof} It is sufficient to apply formula \eqref{parti} to the function $u   e^{-2U }$, which  belongs to $W^{1,r}_{1/2}(X, \mu)$ with  $r=p_0p/(p_0+p)$, and to remark that the trace of the product $u   e^{-2U }$ is the product of the respective traces. 
\end{proof}

\begin{Proposition}
\label{Pr:traccia}
Let $u\in C^1_b(X)$ and let $p > p_0/(p_0-1) $. Then for every $q\in [1, p(p_0-1)/p_0 )$
there is $C_0= C_0(G, q)>0$, independent of $u$ and $U$, such that  
\begin{equation}
\label{maggtraccia}
\int_{G^{-1}(0)} |u|^q e^{-2U_{|G^{-1}(0)}}d\rho \leq C_0  \|u\|_{W^{1,p}_{1/2}(\C,\nu)}^q e^{-2C (p-q)/p}(1+\|Q^{1/2} DU\|\,\|_{L^{p_0}(X, \mu)}).
\end{equation}
\end{Proposition}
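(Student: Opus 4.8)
The plan is to recover $\int_{G^{-1}(0)}|u|^q e^{-2U_{|G^{-1}(0)}}d\rho$ as a boundary term in the integration by parts formula \eqref{partinu} and then to estimate the resulting volume integrals by H\"older's inequality. Since $u\in C^1_b(X)$ we may assume $q>1$, the case $q=1$ following by a routine approximation (replace $|u|^q$ by $(u^2+\eps)^{q/2}$ and let $\eps\to0$). Put $N:=\|Q^{1/2}DG\|$. For each $k\in\N$ I would apply Lemma \ref{Le:partinu} to $w_k:=|u|^q\,\lambda_k D_kG/N$; this is admissible because, by Hypothesis \ref{HypG}(i), $\lambda_k D_kG/N\in\bigcap_{r<\infty}W^{1,r}_{1/2}(X,\mu)$, so that $w_k$ --- a bounded $C^1$ function times this factor --- lies in $W^{1,r}_{1/2}(\C,\nu)$ for every $r<\infty$, in particular for some $r>p_0'$. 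Summing \eqref{partinu} over $k$ and using $\sum_k\lambda_k(D_kG)^2=N^2$, the boundary terms add up, by the multiplicativity of traces (cf. Lemma \ref{Le:U}) and dominated convergence of the partial sums (note $0\le\sum_{k\le m}\lambda_k(D_kG)^2/N^2\le1$), to the left-hand side of \eqref{maggtraccia}, while the interchange of $\sum_k$ with the volume integrals is justified by the Cauchy--Schwarz bounds $\sum_k\lambda_k|D_kG|\,|D_ku|\le N\|Q^{1/2}Du\|$ and $\sum_k\lambda_k|D_kG|\,|D_kU|\le N\|Q^{1/2}DU\|$ together with Hypothesis \ref{HypG}(i). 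In this way one arrives at the identity
\begin{equation}
\label{eq:planid}
\int_{G^{-1}(0)}|u|^q e^{-2U_{|G^{-1}(0)}}d\rho=\int_{\C}\left(\frac{\langle Q^{1/2}D(|u|^q),Q^{1/2}DG\rangle}{N}-\frac{2\,|u|^q\,\langle Q^{1/2}DU,Q^{1/2}DG\rangle}{N}+|u|^q\,R_G\right)d\nu ,
\end{equation}
where $R_G:=\sum_{k}\big(D_k(\lambda_k D_kG/N)-x_k D_kG/N\big)$ --- the Gaussian divergence of the field $QDG/N$ --- depends only on $G$.

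Next I would estimate the three integrands in \eqref{eq:planid}, each time extracting the factor $e^{-2C(p-q)/p}$ from $U\ge C$ and $\nu(\C)\le e^{-2C}$. Since $\|Q^{1/2}D(|u|^q)\|=q|u|^{q-1}\|Q^{1/2}Du\|$, Cauchy--Schwarz gives that the first integrand is at most $q|u|^{q-1}\|Q^{1/2}Du\|$, and H\"older's inequality in $L^\bullet(\C,\nu)$ with exponents $p/(q-1),\,p,\,p/(p-q)$ bounds the first term by $q\,e^{-2C(p-q)/p}\|u\|_{W^{1,p}_{1/2}(\C,\nu)}^q$. Likewise $|\langle Q^{1/2}DU,Q^{1/2}DG\rangle|\le N\|Q^{1/2}DU\|$, so the second integrand is at most $2|u|^q\|Q^{1/2}DU\|$, and H\"older with exponents $p/q,\,p/(p-q)$ bounds the second term by $2\,e^{-2C(p-q)/p}\|u\|_{L^p(\C,\nu)}^q\,\|\,\|Q^{1/2}DU\|\,\|_{L^{p_0}(X,\mu)}$, where the last step uses $\|Q^{1/2}DU\|\in L^{p_0}(X,\mu)$ together with $p/(p-q)\le p_0$, i.e. $q\le p(p_0-1)/p_0$ --- this is precisely where the admissible range of $q$ is used. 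Finally, since $R_G\in L^s(X,\mu)$ for every $s<\infty$ (see below), the same H\"older argument bounds the third term by $C(G,p,q)\,e^{-2C(p-q)/p}\|u\|_{L^p(\C,\nu)}^q$. Adding the three estimates and using $\|u\|_{L^p(\C,\nu)}\le\|u\|_{W^{1,p}_{1/2}(\C,\nu)}$ yields \eqref{maggtraccia}.

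I expect the main obstacle to be the control of the ``boundary divergence'' term $R_G$. One has first to make sense of it --- it is built from $QDG$, the Hessian $D^2G$ and $1/N$ --- and then to prove $R_G\in L^s(X,\mu)$ for every $s<\infty$; this rests on the full second order Sobolev regularity of $G$ and on $1/\|Q^{1/2}DG\|\in\bigcap_p L^p(X,\mu)$ from Hypothesis \ref{HypG}(i), through (a version of) Meyer's inequalities, and is exactly the type of estimate underlying the trace theory of \cite{Tracce}. A second, more bookkeeping-type delicacy is the rigorous verification that $w_k$ belongs to the space to which Lemma \ref{Le:partinu} applies and that summation over $k$ commutes with the trace operator (this is where the partial-sum argument above enters); once identity \eqref{eq:planid} is available, the H\"older estimates and the threshold $q<p(p_0-1)/p_0$ are straightforward.
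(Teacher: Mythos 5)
Your overall strategy is sound and the H\"older bookkeeping at the end is exactly right (including the role of the threshold $q<p(p_0-1)/p_0$ and the origin of the factor $e^{-2C(p-q)/p}$), but the step you yourself flag as ``the main obstacle'' is a genuine gap, not a routine verification. You test the integration-by-parts formula \eqref{partinu} against $w_k=|u|^q\lambda_kD_kG/\|Q^{1/2}DG\|$; for this you need $\lambda_kD_kG/\|Q^{1/2}DG\|$ to lie in $W^{1,r}_{1/2}(X,\mu)$ for some admissible $r$, and you need the divergence-type term $R_G$ --- which contains $\langle Q^{1/2}DG,\,Q^{1/2}D(1/\|Q^{1/2}DG\|)\rangle$ --- to make sense and lie in every $L^s$. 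Hypothesis \ref{HypG}(i) only gives $1/\|Q^{1/2}DG\|\in\bigcap_{p>1}L^p(X,\mu)$; it says nothing about Sobolev regularity of $1/\|Q^{1/2}DG\|$. To obtain that you would first have to show $N:=\|Q^{1/2}DG\|\in W^{1,r}_{1/2}$ (a Lipschitz function of the $X$-valued Sobolev map $Q^{1/2}DG$) and then apply a chain rule to $t\mapsto 1/t$, which is not Lipschitz near $0$ and so requires a truncation argument combined with the $L^p$ bounds on $1/N$ and on the Hessian of $G$. None of this is supplied, and it is not what \cite{Tracce} provides: the trace theory (and the paper) uses only the $L^s$-integrability of $1/N$ on $X$ and on $G^{-1}(0)$, never its differentiability.

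The paper's proof is engineered precisely to avoid differentiating $1/N$. It applies \eqref{partinu} to $\lambda_k|u|^qD_kG$ \emph{without} the normalization, so the volume terms involve only $L_0G$, $\langle Q^{1/2}Du,Q^{1/2}DG\rangle$ and $\langle Q^{1/2}DU,Q^{1/2}DG\rangle$, all controlled directly by Hypotheses \ref{Hyp} and \ref{HypG}(i); the surface term then comes out weighted by $\|Q^{1/2}DG\|$ (estimate \eqref{conquibus}). The weight is removed a posteriori by a H\"older inequality on $G^{-1}(0)$ using only $\|Q^{1/2}DG\|^{-1}\in L^s(G^{-1}(0),\rho)$, and this is where the strict loss from $q$ to some $r\in(q,\,p(p_0-1)/p_0)$ occurs. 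If you could actually establish the Sobolev regularity of $QDG/\|Q^{1/2}DG\|$, your one-pass identity would be a legitimate alternative yielding the unweighted surface integral directly; as written, that regularity is asserted rather than proved, and it is strictly stronger than what the stated hypotheses guarantee.
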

\begin{proof}
Applying   \eqref{partinu} to the functions $\lambda_k |u|^q D_kG$, that belong to $W^{1,s}_{1/2}(X, \mu)$ for every $s>1$, and summing over $k$, we get
$$\int_{\C} ( q|u|^{q-2}u\langle Q^{1/2}Du, Q^{1/2}DG\rangle +  (L_0G -2\langle Q^{1/2}DU, Q^{1/2}DG\rangle)|u|^q)
  \,e^{-2U }d\mu $$
$$=   \int_{G^{-1}(0)} |u|^q  \|Q^{1/2}DG\| \,e^{-2U }d\rho , $$
where $L_0$ is the Ornstein--Uhlenbeck operator of the Malliavin Calculus, 
$$L_0G(x) = \sum_{k=1}^{\infty} (\lambda_k D_{kk}G(x) - x_kD_kG(x)). $$
Here, for every $s>1$ the series converges in $L^s(X, \mu)$ and there is $c_s>0$ such that $\|L_0G\|_{L^s(X, \mu)}\leq c_s\|G\|_{W^{2,s}_{1/2}(X, \mu)}$ (e.g., \cite[\S 5.8]{Boga}). 
Then, for every $q\geq 1$, 
$$ \int_{G^{-1}(0)} |u|^q  \|Q^{1/2}DG\| \,e^{-2U_{|G^{-1}(0)} }d\rho \leq $$
$$\leq \int_{\C} ( q|u|^{q-1} \|Q^{1/2}Du\|\,  \|Q^{1/2}DG\| + |u|^q(|L_0G| + 2\|Q^{1/2}DU\|\, \|Q^{1/2}DG\|))e^{-2U} d\mu.$$
The latter integral is finite, since $u$ and $\|Q^{1/2}Du\|$ are bounded,  
$ \|Q^{1/2}DG\|$, $L_0G \in L^s(\C, \mu) \subset L^s(\C, \nu)$ for every $s$, and $\|Q^{1/2}DU\|\in L^{p_0}(\C, \mu) \subset L^{p_0}(\C, \nu)$. Using  the H\"older inequality to estimate it in terms of $ \|u\|_{W^{1,p}_{1/2}(\C, \nu)}$, with $p>q$, we get
$$\begin{array}{ll}
(i) &
 \ds \int_{\C} q |u|^{q-1} \|Q^{1/2}Du\|\,  \|Q^{1/2}DG\| e^{-2U} d\mu
 \\
 \\
&  \leq q\|u\|_{L^p(\C, \nu)}^{q-1} \|\,\|Q^{1/2}Du\|\,\|_{L^p(\C, \nu)} \|\,  \|Q^{1/2}DG\| \|_{L^{p/(p-q)}(\C, \nu)} ,
\\
\\
(ii) & \ds
\int_{\C}  |u|^q  |L_0G|\,e^{-2U} d\mu 
\leq \|u\|_{L^p(\C, \nu)}^{q} \|\,  L_0G\|_{L^{p/(p-q)}(\C, \nu)},
\\
\\
(iii) & \ds  \int_{\C}    |u|^q \|Q^{1/2}DU\|\, \|Q^{1/2}DG\|\,e^{-2U} d\mu
\leq   \|u\|_{L^p(\C, \nu)}^{q} \|\,\|Q^{1/2}DU\| \,\|_{L^{p_0}(\C, \nu)}
 \|\,\|Q^{1/2}DG\| \,\|_{L^{s}(\C, \nu)} ,
 \end{array}$$
with $1/s= 1-(q/p + 1/p_0)$. Then, 
\begin{equation}
\label{conquibus}
\int_{G^{-1}(0)} |u|^q  \|Q^{1/2}DG\| \,e^{-2U_{|G^{-1}(0)} }d\rho
 \leq C_1 \|u\|_{W^{1,p}_{1/2}(\C,\nu)}^q  e^{-2C (p-q)/p}(1+\|Q^{1/2} DU\|\,\|_{L^{p_0}(X, \mu)}), 
 \end{equation}
where $C_1>0$ depends only on $q$ and $G$. 

Now we recall  that $ \|Q^{1/2}DG\|^{-1}\in L^s (G^{-1}(0),  \rho) 
 \subset L^s (G^{-1}(0), e^{-2U_{|G^{-1}(0)}} \rho)$ $ \forall s>1$. 
Going back to  \eqref{maggtraccia}, 
for any $u\in C^1_b(X)$ and $1\leq q < r < p(p_0-1)/p_0 $ we may write $\int_{G^{-1}(0)} |u|^q  e^{-2U }d\rho = 
\int_{G^{-1}(0)} |u|^q  \|Q^{1/2}DG\|^{q/r} \, \|Q^{1/2}DG\|^{-q/r} e^{-2U_{|G^{-1}(0)} }d\rho$, and using the H\"older inequality, estimate \eqref{conquibus} with $r$ instead of $q$,  we obtain \eqref{maggtraccia}.  
\end{proof}

Let $v\in W^{1,p}_{1/2}(\C, \nu)$ for some $p > p_0/(p_0-1) $. By proposition \ref{Pr:traccia},  for every sequence $(v_n)\subset C^1_b(X)$ such that $v_{n|\C}$ converges to $v$ in $W^{1,p}_{1/2}(\C, \nu)$,   $v_{n|G^{-1}(0)}$ is a Cauchy sequence in 
$L^q(G^{-1}(0), e^{-2U}d\rho)$, for each $q\in [1, p(p_0-1)/p_0 )$. This allows to define the traces at the boundary of the elements of $W^{1,p}_{1/2}(\C, \nu)$. 

\begin{Definition}
Let $v\in W^{1,p}_{1/2}(\C, \nu)$ for some $p>p_0/(p_0-1) $. The trace of $v$ at $G^{-1}(0)$ is the unique element of $L^1(G^{-1}(0), e^{-2U_{|G^{-1}(0)}} \rho)$ to which $v_{n|G^{-1}(0)}$ converges, for every  sequence $(v_n)\subset C^1_b(X)$ such that $v_{n|\C}$ converges to $v$ in $W^{1,p}_{1/2}(\C, \nu)$. It is denoted by 
$v_{|G^{-1}(0)}$. 
\end{Definition}

\begin{Corollary}
\label{Cor:traccia}
The trace at $G^{-1}(0)$ of any $v\in W^{1,p}_{1/2}(\C, \nu)$ belongs to $L^q(G^{-1}(0), e^{-2U_{|G^{-1}(0)}} \rho)$ for $1\leq q<p(p_0-1)/p_0 $, and estimate
\eqref{maggtraccia} holds with $u$ replaced by $u_{|G^{-1}(0)}$ in the surface integral. Therefore,   the mapping $W^{1,p}_{1/2}(\C, \nu)\mapsto L^q(G^{-1}(0), e^{-2U}d\rho)$, $v\mapsto v_{|G^{-1}(0)}$ is bounded. 
Since   $W^{1,p} (\C, \nu) \subset W^{1,p}_{1/2}(\C, \nu)$ with continuous embedding, the same holds for functions in $W^{1,p} (\C, \nu)$. 
\end{Corollary}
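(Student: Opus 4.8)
The plan is to deduce Corollary~\ref{Cor:traccia} almost immediately from Proposition~\ref{Pr:traccia} and the definition of the trace that precedes it, using only a density/passage-to-the-limit argument. First I would fix $v\in W^{1,p}_{1/2}(\C,\nu)$ with $p>p_0/(p_0-1)$ and a sequence $(v_n)\subset C^1_b(X)$ with $v_{n|\C}\to v$ in $W^{1,p}_{1/2}(\C,\nu)$; by the discussion preceding the Definition, $v_{n|G^{-1}(0)}\to v_{|G^{-1}(0)}$ in $L^q(G^{-1}(0),e^{-2U_{|G^{-1}(0)}}\rho)$ for every $q\in[1,p(p_0-1)/p_0)$, so in particular that limit exists in $L^q$ and not merely in $L^1$.

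Next I would apply estimate \eqref{maggtraccia} of Proposition~\ref{Pr:traccia} to each $u=v_n$, obtaining
\[
\int_{G^{-1}(0)}|v_n|^q e^{-2U_{|G^{-1}(0)}}d\rho \leq C_0\,\|v_n\|_{W^{1,p}_{1/2}(\C,\nu)}^q\,e^{-2C(p-q)/p}\big(1+\|\,\|Q^{1/2}DU\|\,\|_{L^{p_0}(X,\mu)}\big),
\]
with $C_0$ independent of $n$. Since $\|v_n\|_{W^{1,p}_{1/2}(\C,\nu)}\to\|v\|_{W^{1,p}_{1/2}(\C,\nu)}$, the right-hand sides are bounded; passing to a further subsequence along which $v_{n|G^{-1}(0)}\to v_{|G^{-1}(0)}$ $\rho$-a.e., Fatou's lemma gives $v_{|G^{-1}(0)}\in L^q(G^{-1}(0),e^{-2U_{|G^{-1}(0)}}\rho)$ together with the same inequality with $u$ replaced by $v_{|G^{-1}(0)}$ in the surface integral. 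Alternatively one simply lets $n\to\infty$ directly in the $L^q$-convergence, which already yields the bound with the limit in place of $v_n$. This proves the asserted estimate and shows the trace map $v\mapsto v_{|G^{-1}(0)}$ is bounded from $W^{1,p}_{1/2}(\C,\nu)$ to $L^q(G^{-1}(0),e^{-2U}d\rho)$, the linearity being evident from the construction of the trace. The final assertion for $W^{1,p}(\C,\nu)$ follows because $\|Q^0 Dw\|=\|Dw\|\geq\|Q^{1/2}Dw\|$ pointwise up to the operator norm $\|Q^{1/2}\|$, giving the continuous inclusion $W^{1,p}(\C,\nu)\subset W^{1,p}_{1/2}(\C,\nu)$ noted in the statement, so composition of bounded maps is bounded.

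There is essentially no substantial obstacle here: the real work was done in Proposition~\ref{Pr:traccia} (the integration-by-parts identity \eqref{partinu}, the Hölder estimates (i)--(iii), and the interpolation step raising $\|Q^{1/2}DG\|^{-1}$ to a suitable power to remove the weight $\|Q^{1/2}DG\|$) and in Lemma~\ref{Le:U} (so that $e^{-2U_{|G^{-1}(0)}}$ is the genuine trace and the surface measure makes sense). The only point requiring a line of care is checking that the constant and the norm on the right-hand side of \eqref{maggtraccia} behave well under the limit, i.e.\ that nothing blows up as $v_n\to v$; since $C_0$ is independent of $u$ and $U$, and the factor $1+\|\,\|Q^{1/2}DU\|\,\|_{L^{p_0}(X,\mu)}$ does not involve $v$ at all, this is immediate. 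Hence the corollary is a routine consequence, and I would present it as such.
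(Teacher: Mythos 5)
Your argument is correct and is exactly the route the paper intends: the corollary is stated without proof as an immediate consequence of Proposition \ref{Pr:traccia} and the preceding Definition, namely applying \eqref{maggtraccia} to an approximating sequence $(v_n)\subset C^1_b(X)$ and passing to the limit in $L^q(G^{-1}(0),e^{-2U_{|G^{-1}(0)}}\rho)$, with the embedding $W^{1,p}(\C,\nu)\subset W^{1,p}_{1/2}(\C,\nu)$ handled by the boundedness of $Q^{1/2}$ as already noted in Section 2. No gaps.
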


Let $u$ be the weak solution $u$ to \eqref{Neumann}. The main result of this section is the fact that    $\langle Du, DG\rangle  $ has   trace at $G^{-1}(0)$, and  that such trace vanishes.  

The first step is the following proposition. 

\begin{Proposition}
\label{difficile}
Let $f\in C_b(X)$ and let $u$, $u_{\alpha}$  be the weak solutions   to \eqref{Neumann} and to \eqref{Kappaalpha}, respectively. Then 
\begin{itemize}
\item[(i)] For every $\alpha >0$ and $p<2$, $\langle Du_{\alpha}, DG\rangle \in W^{1,p}_{1/2}(X, \nu_{\alpha})$, and there is $C_p$ independent of $f$ and $\alpha$ such that $\|\langle Du_{\alpha}, DG\rangle\|_{ W^{1,p}_{1/2}(X, \nu_{\alpha})}\leq C_p\|f\|_{L^2(X, \nu_{\alpha})}$. 
\item[(ii)] The function $\langle Du, DG\rangle  $ belongs to $W^{1,p}_{1/2}(\C, \nu)$, for every $p\in [p_0', 2)$, and  $\|\langle Du , DG\rangle\|_{ W^{1,p}_{1/2}(\C, \nu )}$ $\leq$ $ C_p\|f\|_{L^2(X, \nu )}$, with the same constant $C_p$ as in (i). 
\item[(iii)] There is a vanishing sequence $(\alpha_n)$ such that $u_{\alpha_n|\C}$ converge weakly to $u$ in $W^{2,2}(\C, \nu)$ and in  $W^{1,2}_{-1/2}(\C, \nu)$,   $\langle Du_{\alpha_n}, DG\rangle _{|\C} $ converges weakly to $\langle Du, DG\rangle_{|\C} $ in $W^{1,p}_{1/2}(\C, \nu)$. 
\end{itemize}
\end{Proposition}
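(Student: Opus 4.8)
The plan is to work at the level of the penalized, regularized problems \eqref{Kappaalpha}, where everything is smooth and the estimates of Theorem \ref{stimeLip} apply, and then pass to the limit $\alpha\to 0$ using weak compactness in the relevant Sobolev spaces. First I would treat (i). Recall from Theorem \ref{stimeLip} and Remark \ref{Rem:1} that $u_\alpha$ is a strong solution in the Friedrichs sense, so there is a sequence of cylindrical $\mathcal{FC}^3_b(X)$ functions converging to $u_\alpha$ in $W^{2,2}(X,\nu_\alpha)\cap W^{1,2}_{-1/2}(X,\nu_\alpha)$; hence it suffices to bound $\langle Dv, DG\rangle$ in $W^{1,p}_{1/2}(X,\nu_\alpha)$ for cylindrical $v$ in terms of the $W^{2,2}\cap W^{1,2}_{-1/2}$ norm of $v$, uniformly. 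For such $v$ one computes $D_h\langle Dv, DG\rangle = \sum_k (D_{hk}v\, D_kG + D_kv\, D_{hk}G)$, hence
$$\|Q^{1/2}D\langle Dv, DG\rangle\| \le \|D^2v\|_{\mathcal L_2(X)}\,\|Q^{1/2}DG\| + \|Q^{1/2}D^2G\|_{\mathcal L_2(X)}\,\|Dv\|_X .$$
Wait — one must be careful: the second term really needs $\|D^2G\|$ paired against $\|Q^{1/2}Dv\|$ or vice versa, so I would instead write $D_h\langle Dv,DG\rangle$ with one derivative of $G$ carrying a $Q^{1/2}$ and the Hessian of $v$ in Hilbert–Schmidt norm, and the Hessian of $G$ (which lies in every $L^s$ by Hypothesis \ref{HypG}(i)) paired with $Q^{1/2}Dv$. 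Then Hölder in $L^p$ with the Hilbert–Schmidt norm of $D^2 v$ in $L^2$, $\|Q^{1/2}DG\|$ in the conjugate exponent (finite by Hypothesis \ref{HypG}(i), since $G\in\cap_s W^{2,s}_{1/2}$), together with the control $\|Q^{-1/2}Dv\|\in L^2$ from $W^{1,2}_{-1/2}$, gives $\|\langle Dv,DG\rangle\|_{W^{1,p}_{1/2}}\le C_p\|v\|_{W^{2,2}\cap W^{1,2}_{-1/2}}$ for $p<2$; combined with \eqref{e25} (in the form \eqref{eq:a0}, where the extra term $\int\langle D^2U_\alpha Du_\alpha, Du_\alpha\rangle d\nu_\alpha\ge 0$ by convexity is simply dropped) this yields the stated bound with $C_p$ depending only on $G$ and $p$, not on $f$, $\alpha$, or $U$.

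For (iii) and (ii) I would argue by compactness. By the uniform bounds of Proposition \ref{Pr:weak} and its proof, $(u_{\alpha|\C})$ is bounded in $W^{2,2}(\C,\nu)\cap W^{1,2}_{-1/2}(\C,\nu)$, and by part (i) together with $U_\alpha\le U$ and $V_\alpha\ge C$ (so that $d\nu_\alpha$ controls $d\nu$ on $\C$ up to the constant $e^{-2C}$ in the wrong direction — more precisely $\nu$ restricted to $\C$ has density $e^{-2U}\le e^{-2U_\alpha}$ against $\mu$, hence $\|\cdot\|_{L^p(\C,\nu)}\le\|\cdot\|_{L^p(X,\nu_\alpha)}$), $(\langle Du_\alpha, DG\rangle_{|\C})$ is bounded in $W^{1,p}_{1/2}(\C,\nu)$ for $p\in[p_0',2)$. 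Since these spaces are reflexive (Proposition \ref{varie}(i)), along a suitable subsequence $\alpha_n\to 0$ we get the weak convergences asserted in (iii), with the weak limit of $u_{\alpha_n|\C}$ being the weak solution $u$ to \eqref{Neumann} by Proposition \ref{Pr:weak}. The only point requiring care is the identification of the weak limit of $\langle Du_{\alpha_n}, DG\rangle$ as $\langle Du, DG\rangle$: since $u_{\alpha_n|\C}\to u$ strongly in $W^{1,2}(\C,\nu)$ by Proposition \ref{Pr:weak}, $Du_{\alpha_n}\to Du$ in $L^2(\C,\nu;X)$, while $DG$ (more precisely $Q^{1/2}DG$) is fixed and lies in every $L^s$; hence $\langle Du_{\alpha_n},DG\rangle\to\langle Du,DG\rangle$ at least in $L^r(\C,\nu)$ for $r<2$ (say by pairing $Q^{-1/2}Du_{\alpha_n}\rightharpoonup Q^{-1/2}Du$ weakly in $L^2$ against $Q^{1/2}DG\in L^s$), which pins down the weak $W^{1,p}_{1/2}$-limit. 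Then (ii) follows from the lower semicontinuity of the norm under weak convergence, carrying the uniform bound of (i) past the limit and using $\int_X f^2 d\nu_{\alpha_n}\to\int_\C f^2 e^{-2U}d\mu=\|f\|_{L^2(\C,\nu)}^2=\|f\|_{L^2(X,\nu)}^2$ (recalling $f$ is extended by $0$ outside $\C$ in the $L^2(X,\nu)$ norm, or simply that $\|f\|_{L^2(X,\nu_{\alpha_n})}$ has the right limit).

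The main obstacle I anticipate is \textbf{part (i)}: getting the constant $C_p$ genuinely independent of $\alpha$ and of $U$. The subtlety is that the natural estimate for $\langle Dv,DG\rangle$ wants the full Hessian $D^2v$ in Hilbert–Schmidt norm and the first derivative $Dv$ weighted by $Q^{-1/2}$ (to absorb a $Q^{1/2}$ onto $DG$), and both of these are controlled by \eqref{e25}/\eqref{eq:a0} only because of the simultaneous $W^{2,2}$ and $W^{1,2}_{-1/2}$ bounds there — one must make sure the Hölder exponents match up so that the integrability of $\|Q^{1/2}DG\|$, $\|Q^{-1/2}DG\|$, and the Hessian of $G$ (all finite by Hypothesis \ref{HypG}(i)) suffices for every $p<2$, and in particular that no derivative of $U_\alpha$ enters (it does not, since $G$ is independent of $U$ and the penalization). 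A second, more routine, technical point is justifying that $\langle Dv, DG\rangle$ genuinely lies in $W^{1,p}_{1/2}(X,\nu_\alpha)$ and not merely that its formal gradient is in $L^p$: this is handled by the density of cylindrical functions and the closability built into the definition of the Sobolev spaces, approximating $G$ itself by $P_m$-cylindrical truncations if needed and invoking Proposition \ref{varie}(ii).
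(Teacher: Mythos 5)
Your overall strategy coincides with the paper's: part (i) is proved on the Friedrichs approximants $u_{\alpha,n}\in{\mathcal F}{\mathcal C}^2_b(X)$ furnished by Theorem \ref{stimeLip} and Remark \ref{Rem:1}, the key point being a pointwise bound on $\|Q^{1/2}D\langle Dv,DG\rangle\|$ in terms of quantities controlled by \eqref{diss} and \eqref{e25}, followed by an a.e.\ convergence argument and Proposition \ref{varie}(ii); parts (ii) and (iii) then follow by reflexivity, weak lower semicontinuity of the norm, and identification of the weak limit through the continuity of $v\mapsto\langle Dv,DG\rangle$ from $W^{1,2}(\C,\nu)$ into $L^p(\C,\nu)$ --- all exactly as in the paper.

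However, the central estimate, which you yourself single out as the main obstacle, is not carried out correctly, and in the forms you state it, it fails. Writing $D_h\langle Dv,DG\rangle=\sum_j(D_{hj}v\,D_jG+D_jv\,D_{hj}G)$ and multiplying by $\lambda_h^{1/2}$, the admissible pairings are: for the first sum, $\lambda_h^{1/2}\bigl(\sum_j(D_{hj}v)^2\bigr)^{1/2}\,\|DG\|$, which after summing the squares over $h$ gives $(\sup_k\lambda_k)^{1/2}\,\|D^2v\|_{{\mathcal L}_2(X)}\,\|DG\|$ with the \emph{unweighted} gradient of $G$ --- this is precisely where Hypothesis \ref{HypG}(ii) is needed, a point your write-up misses since you attribute all the integrability of $G$ to \ref{HypG}(i); and, for the second sum, $\sum_j(\lambda_j^{-1/2}D_jv)(\lambda_h^{1/2}\lambda_j^{1/2}D_{hj}G)$, which yields $\|Q^{-1/2}Dv\|\,\|Q^{1/2}D^2G\,Q^{1/2}\|_{{\mathcal L}_2(X)}$. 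Your displayed inequality instead puts $\|Q^{1/2}DG\|$ against $\|D^2v\|_{{\mathcal L}_2(X)}$, which would require controlling $\sum_{h,j}\lambda_h\lambda_j^{-1}(D_{hj}v)^2$ rather than the Hilbert--Schmidt norm, and this is not provided by \eqref{e25}; your ``corrected'' pairing of the Hessian of $G$ against $Q^{1/2}Dv$ likewise needs $Q^{1/2}D^2G\,Q^{-1/2}$ in Hilbert--Schmidt norm, which Hypothesis \ref{HypG} does not give; and the quantity $\|Q^{-1/2}DG\|$ that you list as ``finite by Hypothesis \ref{HypG}(i)'' is not assumed integrable anywhere. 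With the weights distributed as above, the rest of your argument (H\"older with exponent $2/(2-p)$, the bounds \eqref{diss} and \eqref{e25} giving a constant independent of $\alpha$, $U$ and $f$, and the limit passages in (ii)--(iii)) goes through as in the paper.
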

\begin{proof}
 By Remark \ref{Rem:1}, there exists a sequence  $(u_{\alpha, n} )\subset {\mathcal F}{\mathcal C}^2_b(X) $ that converges to $u_{\alpha}$ in $W^{2,2} (X, \nu_{\alpha})$ $\cap$ $ W^{1,2}_{-1/2}(X, \nu_{\alpha})$, and such that $f_n:= \lambda u_{\alpha, n}  - K_{\alpha_n}u_{\alpha, n} $ goes to $f$ in $L^2(X, \nu_{\alpha})$ as $n\to \infty$. We set
 $$v_{\alpha} :=\langle Du_{\alpha}, DG\rangle  , \quad v_{\alpha, n} := \langle Du_{\alpha, n}, DG\rangle  . $$
Then,  $v_{\alpha, n} = \sum_{k=1}^{\infty} D_ku_{\alpha, n}D_kG \in W^{1,p}_{1/2}(X, \nu_{\alpha})$ for every $p>1$, since the series is in fact a finite sum (note that, since $DV_{\alpha}$ is Lipschitz continuous, then $\|DV_{\alpha}\|\in L^p(X, \nu)$ for every $\alpha$, and the Sobolev spaces with respect to $\nu_{\alpha}$ are well defined for every $p>1$). By the H\"older inequality,  $\lim_{n\to \infty}v_{\alpha, n}  = v_{\alpha} $ in $L^p(X, \nu_{\alpha})$. Possibly  replacing $v_{\alpha, n}  $ by a subsequence, we may assume that $(v_{\alpha, n}  )$ converges to $ v_{\alpha} $, $\nu_{\alpha}$-a.e.
 
Now we prove that the sequence $(v_{\alpha, n} )$ is bounded in $W^{1,p}_{1/2}(X, \nu_{\alpha})$, for $p<2$. 
 For every $k\in \N$ we have
$$\begin{array}{l}
| D_k \langle Du_{\alpha, n}, DG\rangle |   \leq   \ds \bigg|\sum_{j\in \N} (D_{kj}u_{\alpha, n}D_jG + D_juD_{kj}G)\bigg|
\\
\\
  \leq  \ds \bigg(\sum_{j\in \N} (D_{kj}u_{\alpha, n})^2\bigg)^{1/2} \bigg(\sum_{j\in \N} (D_{j}G)^2\bigg)^{1/2}
 + \bigg(\sum_{j\in \N}\lambda _j^{-1}(D_{j}u_{\alpha, n})^2\bigg)^{1/2}\bigg(\sum_{j\in \N}\lambda _j (D_{kj}G)^2\bigg)^{1/2}
\end{array}$$
so that 
$$\begin{array}{l}
\|Q^{1/2}D(\langle Du_{\alpha, n}, DG\rangle)\| 
\\
\\
\leq \sqrt{2}[ \max_{k\in \N} \lambda_k^{1/2} ( \textrm{Tr}(D^2u_{\alpha, n}^2))^{1/2}\|DG\| + \|Q^{-1/2}Du_{\alpha, n}\|  (\textrm{Tr}(Q^{1/2}D^2GQ^{1/2})^2 )^{1/2}].
\end{array}$$
By our assumptions,    $\|DG\| $ and    $\textrm{Tr}(Q^{1/2}D^2GQ^{1/2})^2 )^{1/2}$ belong to $L^s(X, \mu)\subset L^s(X,  \nu_{\alpha})$ for every $s$. 
Since $e^{-2U_{\alpha}}\leq e^{-2C}$, their $L^s(X,  \nu_{\alpha})$-norm is bounded by a constant independent of $\alpha$. 
Using the H\"older inequality  with with $s= 2/(2-p)$,  we obtain
$$
\int_{X}\|Q^{1/2}D(\langle Du_{\alpha, n}, DG\rangle)\|^p d\nu_{\alpha} \leq c_p(\|u_{\alpha, n}\|_{W^{2,2}(X, \nu_{\alpha})} +  \|u_{\alpha, n}\|_{W^{1,2}_{-1/2}(X, \nu_{\alpha})})^{p }, 
$$
where $c_p>0$ does not depend on $\alpha$ and $n$. By estimates \eqref{diss} and \eqref{e25}, 
$$\|u_{\alpha, n}\|_{W^{2,2}(X, \nu_{\alpha})} +  \|u_{\alpha, n}\|_{W^{1,2}_{-1/2}(X, \nu_{\alpha})} \leq k_{\lambda}\|f_n\|_{L^{2,}(X, \nu_{\alpha})}, $$ 
where $k_{\lambda}$ depends only on $\lambda$. Therefore, 
$$\limsup_{n\to \infty}\|\langle Du_{\alpha, n}, DG\rangle)\|_{W^{1,p}_{1/2}(X, \nu_{\alpha})} \leq C_{p, \lambda}\|f\|_{L^{2}(X, \nu_{\alpha})} .$$  
where $C_{p, \lambda}$ is independent of $f$,  $\alpha $ and $n$. Applying now Proposition \ref{varie}(ii), with  $X$ replacing $\C$  and $\nu_{\alpha}$  replacing $\nu$,  yields   statement (i). 

 Let now $u_{\alpha_n}$ be any sequence of solutions to \eqref{Kappaalpha} such that the restrictions $u_{\alpha_n|\C}$ converge weakly to $u$ in $W^{2,2}(\C, \nu)$ and in  $W^{1,2}_{-1/2}(\C, \nu)$. 
Since $W^{1,p}_{1/2}(X, \nu_{\alpha})\subset W^{1,p}_{1/2}(X, \nu)$
  for every $p\in [p_0', 2)$, by (i) the sequence $\langle Du_{\alpha_n}, DG\rangle _{|\C}$ is bounded in $W^{1,p}_{1/2}(\C, \nu)$. More precisely, we have 
 $$\|\langle Du_{\alpha_n}, DG\rangle _{|\C}\|_{W^{1,p}_{1/2}(\C, \nu)} \leq  \|\langle Du_{\alpha_n}, DG\rangle _{|\C}\|_{W^{1,p}_{1/2}(\C, \nu_{\alpha_n})} \leq  C_{p, \lambda} \|f\|_{L^2(X, \nu_{\alpha_n})},$$
 so that 
\begin{equation}
\label{civuole}\limsup_{n\to \infty} \|\langle Du_{\alpha_n}, DG\rangle _{|\C}\|_{W^{1,p}_{1/2}(\C, \nu)} \leq \limsup_{n\to \infty} C_{p, \lambda} \|f\|_{L^2(X, \nu_{\alpha_n})} = C_{p, \lambda} \|f\|_{L^2(X, \nu )}. 
\end{equation}
Since $W^{1,p}_{1/2}(\C, \nu)$ is reflexive by Proposition \ref{varie}(i), a subsequence of $\langle Du_{\alpha_n}, DG\rangle  _{|\C}$ converges weakly in $W^{1,p}_{1/2}(\C, \nu)$ to an element $\psi \in W^{1,p}_{1/2}(\C, \nu)$, that satisfies $\|\psi\|_{W^{1,p}_{1/2}(\C, \nu)} \leq C_{p, \lambda} \|f\|_{L^2(X, \nu )}$, by 
\eqref{civuole}. 

Let us identify $\psi $ with $\langle Du, DG\rangle  $. Indeed, by the H\"older inequality, 
the mapping $v\mapsto \langle Dv, DG\rangle$ is bounded from $W^{1,2}(\C, \nu)$ to $L^p(\C, \nu)$. Since  $u_{\alpha_{n}|\C}$   converges weakly to $u$ in $W^{1,2}(\C, \nu)$, then $\langle Du_{\alpha_n}, DG\rangle  _{|\C}$ converges weakly to $\langle Du, DG\rangle  $ in $L^{p} (\C, \nu)$. Then 
$\psi =\langle Du, DG\rangle \in L^{p} (\C, \nu)$. This proves statements (ii) and (iii). 
\end{proof}

The reason why we need two steps in the proof of Proposition \ref{difficile} is that, while the sequence of  cylindrical functions $(u_{\alpha, n})$ that approaches $u_{\alpha}$ is bounded   both in $W^{2,2}(X, \nu)$ and in $W^{1,2}_{-1/2}(X, \nu)$, it seems not easy to find a sequence of cylindrical functions that approaches $u$ and that is bounded  both in $W^{2,2}(\C, \nu)$ and in $W^{1,2}_{-1/2}(\C, \nu)$.

As a consequence of Proposition \ref{difficile}(ii), the function $ \langle Du, DG\rangle $ has trace at $G^{-1}(0)$, that belongs to $L^q(G^{-1}(0), \rho)$ for every $q<2$. To show that such a trace vanishes we shall use 
the integration formula of the next lemma for the approximating functions $u_{\alpha_n}$. 

\begin{Lemma}
\label{Le:ualpha}
Fix $\alpha >0$, $f\in L^2(X, \nu_{\alpha})$, and let $u_{\alpha}$ be the weak solution to 
\eqref{Kappaalpha}. Then, for every 
$\varphi \in C^1_b(X)$ we have
\begin{equation}
\label{eq:ualpha}
  \int_{\C}  K_{\alpha}u_{\alpha}\varphi \,e^{-2U_{\alpha}}d\mu =  - \frac{1}{2} \int_{\C} \langle Du_{\alpha}, D\varphi\rangle e^{-2U_{\alpha}}d\mu 
+  \frac{1}{2} \int_{G^{-1}(0)} \varphi\, \frac{\langle Du_{\alpha}, DG\rangle }{\|Q^{1/2}DG\|}\,e^{-2U_{\alpha}}d\rho .
\end{equation}
\end{Lemma}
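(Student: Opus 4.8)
The plan is to start from the weak formulation of \eqref{Kappaalpha}, written for the penalized operator ${\mathcal K}_{\alpha}$ on the whole space $X$, and to split the integrals into integrals over $\C$ and over $\C^c$, but this time keeping track of the boundary term that appears when one integrates by parts on $\C$ alone. Concretely, recall that $u_{\alpha}$ is a strong solution in the Friedrichs sense (Theorem \ref{stimeLip} and Remark \ref{Rem:1}), so there is a sequence $(u_{\alpha, n})\subset {\mathcal F}{\mathcal C}^3_b(X)$ converging to $u_{\alpha}$ in $W^{2,2}(X, \nu_{\alpha})\cap W^{1,2}_{-1/2}(X, \nu_{\alpha})$, with $f_n := \lambda u_{\alpha, n} - {\mathcal K}_{\alpha}u_{\alpha, n} \to f$ in $L^2(X, \nu_{\alpha})$. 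For these smooth cylindrical functions the identity \eqref{eq:ualpha} is an integration by parts, which I would derive first for $u_{\alpha, n}$ and then pass to the limit $n\to\infty$.

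For the smooth case, the key computation is: write ${\mathcal K}_{\alpha}u_{\alpha, n} = \frac12 \mathrm{Tr}[D^2 u_{\alpha, n}] - \langle Ax + DU_{\alpha}(x), Du_{\alpha, n}\rangle - \frac1\alpha\langle x - \Pi_{\C}(x), Du_{\alpha, n}\rangle$, multiply by $\varphi e^{-2U_{\alpha}}$ and integrate over $\C$. Since on $\C$ one has $\Pi_{\C}(x) = x$, the penalization term drops out of the integral over $\C$, so ${\mathcal K}_{\alpha}u_{\alpha, n}$ restricted to $\C$ equals ${\mathcal K}u_{\alpha, n}$, the Kolmogorov operator associated to $U_{\alpha}$ without penalization. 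Now I would sum the integration by parts formula \eqref{parti} against $\mu$ (or rather its $\nu$-weighted form; but it is cleanest to work directly against $e^{-2U_{\alpha}}d\mu$), applied componentwise to the functions $\lambda_k D_k u_{\alpha, n}\, \varphi$, with the weight $e^{-2U_{\alpha}}$ folded into $\varphi$. Summing over $k$ turns $\sum_k \lambda_k \int_{\C} D_k(D_k u_{\alpha, n}\varphi e^{-2U_{\alpha}}) d\mu$ into the bulk terms involving $\frac12\mathrm{Tr}[D^2 u_{\alpha, n}]$, the Ornstein--Uhlenbeck drift $\langle Ax, \cdot\rangle$ (recall $Q = -\frac12 A^{-1}$, so $\frac{x_k}{\lambda_k}$ produces exactly $-2\langle Ax, \cdot\rangle$ up to the normalization built into the definitions here), the $DU_{\alpha}$ drift term, the cross term $-\frac12\langle Du_{\alpha, n}, D\varphi\rangle e^{-2U_{\alpha}}$, and the surface integral $\frac12\int_{G^{-1}(0)} \varphi \langle Du_{\alpha, n}, DG\rangle \|Q^{1/2}DG\|^{-1} e^{-2U_{\alpha}} d\rho$ coming from the $D_k G / \|Q^{1/2}DG\|$ factor in \eqref{parti}. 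Rearranging gives \eqref{eq:ualpha} for $u_{\alpha, n}$.

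Then I would let $n\to\infty$. The bulk terms on the right of \eqref{eq:ualpha} pass to the limit because $u_{\alpha, n}\to u_{\alpha}$ in $W^{1,2}(X, \nu_{\alpha})$ and $D u_{\alpha, n}\to Du_{\alpha}$ in $L^2$; the left-hand side converges because ${\mathcal K}u_{\alpha, n} = \lambda u_{\alpha, n} - f_n \to \lambda u_{\alpha} - f = {\mathcal K}u_{\alpha}$ in $L^2(\C, \nu_{\alpha})$ (note ${\mathcal K}u_{\alpha} = {\mathcal K}_{\alpha}u_{\alpha}$ on $\C$). For the surface term I would invoke Proposition \ref{difficile}(i): $\langle Du_{\alpha, n}, DG\rangle \to \langle Du_{\alpha}, DG\rangle$ in $W^{1,p}_{1/2}(X, \nu_{\alpha})$ for $p<2$ — this requires checking that the approximating sequence for $v_{\alpha} = \langle Du_{\alpha}, DG\rangle$ from the proof of that proposition is exactly (a subsequence of) $\langle Du_{\alpha, n}, DG\rangle$, which it is, being built from the same $u_{\alpha, n}$ — combined with the boundedness of the trace operator $W^{1,p}_{1/2}(X, \mu)\to L^q(G^{-1}(0), \rho)$ and the bound $e^{-2U_{\alpha}}\le e^{-2C}$, so that $\langle Du_{\alpha, n}, DG\rangle_{|G^{-1}(0)}\to \langle Du_{\alpha}, DG\rangle_{|G^{-1}(0)}$ in $L^q(G^{-1}(0), e^{-2U_{\alpha}}d\rho)$; testing against $\varphi \|Q^{1/2}DG\|^{-1} \in L^{q'}$ (using $\|Q^{1/2}DG\|^{-1}\in \bigcap_s L^s(G^{-1}(0),\rho)$ from Hypothesis \ref{HypG}(i)) then gives convergence of the surface integral. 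The main obstacle I expect is precisely this last point: making the integration by parts on the subdomain $\C = G^{-1}((-\infty,0])$ rigorous, i.e. justifying that the boundary contribution in \eqref{parti} really has the stated form when applied to the not-globally-smooth weight $e^{-2U_{\alpha}}$ and to $\varphi$ cut off near $G^{-1}(0)$, and controlling that the approximating cylindrical functions $u_{\alpha, n}$ (which live on all of $X$) generate traces that converge to the trace of $u_{\alpha}$ in a space against which $\varphi \|Q^{1/2}DG\|^{-1}$ pairs — this is where Hypothesis \ref{HypG} and the trace theory of \cite{Tracce} do the heavy lifting.
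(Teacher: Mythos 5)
Your overall strategy is the paper's: approximate $u_\alpha$ by the Friedrichs sequence $(u_{\alpha,n})\subset{\mathcal F}{\mathcal C}^2_b(X)$ from Theorem \ref{stimeLip} and Remark \ref{Rem:1}, establish \eqref{eq:ualpha} for each $u_{\alpha,n}$ by summing the weighted integration-by-parts formula \eqref{partinu} over $k$ (the paper applies it to $\varphi D_k u_{\alpha,n}$), and pass to the limit. Two remarks, one cosmetic and one substantive.

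The cosmetic one: your test functions $\lambda_k D_k u_{\alpha,n}\,\varphi$ carry a spurious $\lambda_k$. With that factor the summation produces $\tfrac12\sum_k\lambda_k D_{kk}u_{\alpha,n}$ and the drift $\sum_k x_k D_k u_{\alpha,n}$, i.e.\ the Malliavin Ornstein--Uhlenbeck operator $L_0$, not the operator $\tfrac12\Tr[D^2\cdot]+\langle Ax,D\cdot\rangle$ appearing in ${\mathcal K}_\alpha$ (here $\langle Ax,Du\rangle=-\tfrac12\sum_k\lambda_k^{-1}x_kD_ku$). Dropping the $\lambda_k$, i.e.\ applying \eqref{partinu} to $\varphi D_ku_{\alpha,n}$ and multiplying by $\tfrac12$, gives exactly \eqref{eq:ualpha} for $u_{\alpha,n}$; you hedge on the normalization, so this is easily repaired.

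The substantive one concerns the surface term. You pass to the limit by asserting that $\langle Du_{\alpha,n},DG\rangle\to\langle Du_\alpha,DG\rangle$ \emph{strongly} in $W^{1,p}_{1/2}(X,\nu_\alpha)$, citing Proposition \ref{difficile}(i), and then compose with the trace operator. But neither the statement nor the proof of Proposition \ref{difficile}(i) gives strong convergence: what is established there is that the sequence $v_{\alpha,n}=\langle Du_{\alpha,n},DG\rangle$ is \emph{bounded} in $W^{1,p}_{1/2}(X,\nu_\alpha)$ and converges to $v_\alpha$ in $L^p(X,\nu_\alpha)$, after which Proposition \ref{varie}(ii) yields only that the limit belongs to $W^{1,p}_{1/2}$. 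Strong convergence of $Q^{1/2}Dv_{\alpha,n}$ in $L^p$ is not available and is not needed. The paper closes the argument differently: by reflexivity a subsequence of $(v_{\alpha,n})$ converges \emph{weakly} in $W^{1,p}_{1/2}(X,\nu_\alpha)$ (necessarily to $v_\alpha$, by the $L^p$ convergence), and the map
$$v\mapsto \int_{G^{-1}(0)}\varphi\,\frac{\langle Dv,DG\rangle}{\|Q^{1/2}DG\|}\,e^{-2U_\alpha}\,d\rho$$
is a bounded linear functional on $W^{1,p}_{1/2}(X,\nu_\alpha)$ (this is where the trace estimates of Proposition \ref{Pr:traccia} and $\|Q^{1/2}DG\|^{-1}\in\bigcap_s L^s(G^{-1}(0),\rho)$ enter), so the surface integrals converge along that subsequence, which suffices since the other terms of \eqref{3.15} converge along the full sequence. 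You should replace your strong-convergence step by this weak-convergence/bounded-functional argument; as written, that step is unjustified.
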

\begin{proof} By Theorem \ref{stimeLip} and Remark  \ref{Rem:1},  there exists a sequence  $u_{\alpha, n}\in  {\mathcal F}{\mathcal C}^2_b(X)$   such that $\lim_{n\to \infty} u_{\alpha, n}$ $ = u_{\alpha}$, 
$\lim_{n\to \infty} {\mathcal K}_{\alpha}u_{\alpha, n} =  K_{\alpha}u_{\alpha} =   \lambda u_{\alpha}- f$, in $L^2(X, \nu_{\alpha})$, and $\lim_{n\to \infty} u_{\alpha, n} = u_{\alpha}$ in $W^{2,2}_{0}(X, \nu_{\alpha})$ $\cap$ $ W^{1,2}_{-1/2}(X, \nu_{\alpha})$. 
For every $k\in \N$,  $\varphi D_k u_{\alpha, n}\in C^1_b(X)$. 
Replacing $u$ by $\varphi D_k u_{\alpha, n}$  in \eqref{partinu}, with $\nu$ replaced by  $\nu_{\alpha}$, and summing over $k$ yields 
\begin{equation}
\label{3.15}
 \int_{\C}  {\mathcal K}_{\alpha} u_{\alpha, n}\,\varphi \,e^{-2U_{\alpha}}d\mu =  - \frac{1}{2} \int_{\C} \langle Du_{\alpha, n}, D\varphi\rangle e^{-2U_{\alpha}}d\mu 
 +  \frac{1}{2} \int_{G^{-1}(0)} \varphi\, \frac{\langle Du_{\alpha, n}, DG\rangle }{\|Q^{1/2}DG\|}\,e^{-2U_{\alpha}}d\rho .
\end{equation}
The integrals over $\C$ converge to their respective limits by dominated convergence. Concerning the integral over $G^{-1}(0)$, in the proof of Proposition \ref{difficile} we have shown that the sequence $(\langle Du_{\alpha, n}, DG\rangle)$ is bounded in $W^{1,p}_{1/2}(X, \nu_{\alpha})$ and converges to $\langle Du_{\alpha}, DG\rangle$ in   $L^p(X, \nu_{\alpha})$. Since $W^{1,p}_{1/2}(X, \nu_{\alpha})$ is reflexive, a subsequence converges weakly to $\langle Du_{\alpha}, DG\rangle$. The linear functional
$$v\mapsto \int_{G^{-1}(0)} \varphi\, \frac{\langle Dv, DG\rangle }{\|Q^{1/2}DG\|}\,e^{-2U_{\alpha}}d\rho $$
is bounded in  $W^{1,p}_{1/2}(X, \nu_{\alpha})$, hence it is an element of $(W^{1,p}_{1/2}(X, \nu_{\alpha}))'$. 
Letting $n\to \infty$ along the weakly convergent subsequence, yields 
$$\lim_{n\to \infty}  \int_{G^{-1}(0)} \varphi\, \frac{\langle Du_{\alpha, n}, DG\rangle }{\|Q^{1/2}DG\|}\,e^{-2U_{\alpha}}d\rho = \int_{G^{-1}(0)} \varphi\, \frac{\langle Du_{\alpha}, DG\rangle }{\|Q^{1/2}DG\|}\,e^{-2U_{\alpha}}d\rho , $$
and \eqref{eq:ualpha} follows.

Note that the approximation procedure   is needed, because we do not know whether the series $\sum_k (D_{kk}u_{\alpha}/2 - x_kD_ku_{\alpha}/\lambda_k + D_kUD_ku_{\alpha})$ converges to $ {\mathcal K}_{\alpha}  u_{\alpha}$
in $L^1(\C, \nu_{\alpha})$, while replacing $u_{\alpha}$ by  $u_{\alpha, n}$ this is just a finite sum, and \eqref{3.15} follows. 
\end{proof}

\begin{Theorem}
\label{finale}
For  $\lambda >0$ and $f\in L^2(\C, \nu)$, let $u$ be the weak solution  of \eqref{Neumann}. 
Then $\langle Du, DG\rangle =0$ at $G^{-1}(0)$, $\rho$--a.e.
\end{Theorem}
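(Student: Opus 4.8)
The plan is to pass to the limit $\alpha_n\to 0$ in the boundary integration-by-parts formula \eqref{eq:ualpha} of Lemma \ref{Le:ualpha}, applied to the penalized solutions $u_{\alpha_n}$ along the vanishing sequence $(\alpha_n)$ furnished by Proposition \ref{difficile}(iii). First I would reduce to the case $f\in C_b(X)$: given $f\in L^2(\C,\nu)$, approximate it in $L^2$ by $f_k\in C_b(X)$; by \eqref{dissC} the corresponding weak solutions $u_k$ converge to $u$ in $W^{1,2}(\C,\nu)$, while the map $f\mapsto\langle Du,DG\rangle$ is linear and, by Proposition \ref{difficile}(ii), bounded from $C_b(X)\subset L^2(\C,\nu)$ into $W^{1,p}_{1/2}(\C,\nu)$ for $p\in[p_0',2)$, so $\langle Du_k,DG\rangle\to\langle Du,DG\rangle$ in $W^{1,p}_{1/2}(\C,\nu)$. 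Then Corollary \ref{Cor:traccia} gives convergence of the traces in $L^q(G^{-1}(0),e^{-2U}d\rho)$, so that the vanishing of each $\langle Du_k,DG\rangle_{|G^{-1}(0)}$ forces the vanishing of $\langle Du,DG\rangle_{|G^{-1}(0)}$.

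Assume then $f\in C_b(X)$ and fix $(\alpha_n)$ as in Proposition \ref{difficile}(iii); by Proposition \ref{Pr:weak} one has $u_{\alpha_n|\C}\to u$ in $W^{1,2}(\C,\nu)$ and $u$ is the weak solution of \eqref{Neumann}. Writing $K_{\alpha_n}u_{\alpha_n}=\lambda u_{\alpha_n}-f$ and rearranging \eqref{eq:ualpha} gives, for $\varphi\in C^1_b(X)$,
$$\frac12\int_{G^{-1}(0)}\varphi\,\frac{\langle Du_{\alpha_n},DG\rangle}{\|Q^{1/2}DG\|}\,e^{-2U_{\alpha_n}}d\rho=\int_{\C}(\lambda u_{\alpha_n}-f)\,\varphi\,e^{-2U_{\alpha_n}}d\mu+\frac12\int_{\C}\langle Du_{\alpha_n},D\varphi\rangle\,e^{-2U_{\alpha_n}}d\mu.$$
The right-hand side only involves integrals over $\C$, and the very arguments used in the proof of Proposition \ref{Pr:weak} (using $U_{\alpha_n}\le U$, $e^{-2U_{\alpha_n}}\to e^{-2U}$ in $L^{p_0}(\C,\mu)$, and $u_{\alpha_n|\C}\to u$ in $W^{1,2}(\C,\nu)$) show that it converges to $\int_{\C}(\lambda u-f)\varphi\,d\nu+\frac12\int_{\C}\langle Du,D\varphi\rangle\,d\nu$, which is $0$ by the weak formulation \eqref{debole}. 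Hence the surface integrals on the left tend to $0$.

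The core of the argument is then to identify that same limit with $\int_{G^{-1}(0)}\varphi\,\langle Du,DG\rangle\,\|Q^{1/2}DG\|^{-1}e^{-2U}d\rho$. Fix $p\in(p_0',2)$ and $q\in(1,p(p_0-1)/p_0)$, and set $\sigma:=e^{-2U_{|G^{-1}(0)}}\rho$, $\sigma_n:=e^{-2U_{\alpha_n|G^{-1}(0)}}\rho$, $g_n:=\langle Du_{\alpha_n},DG\rangle_{|G^{-1}(0)}$, $g:=\langle Du,DG\rangle_{|G^{-1}(0)}$, and $\psi:=\varphi\,\|Q^{1/2}DG\|^{-1}$. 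By Proposition \ref{difficile}(i)--(iii), $\langle Du_{\alpha_n},DG\rangle_{|\C}\rightharpoonup\langle Du,DG\rangle_{|\C}$ in $W^{1,p}_{1/2}(\C,\nu)$ with $\|\langle Du_{\alpha_n},DG\rangle\|_{W^{1,p}_{1/2}(\C,\nu_{\alpha_n})}$ bounded; applying Corollary \ref{Cor:traccia} once with the weight $\nu$ and once, uniformly in $n$, with $\nu_{\alpha_n}$ (legitimate since the constant $C_0$ in \eqref{maggtraccia} is independent of $U$ and $\|Q^{1/2}DU_{\alpha_n}\|_{L^{p_0}}$ is bounded) yields that $g_n$ is bounded in $L^q(\sigma_n)$ and $g_n\rightharpoonup g$ weakly in $L^q(\sigma)$; moreover $\sigma\le\sigma_n$ because $U_{\alpha_n}\le U$. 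Since $\psi\in L^{q'}(\sigma)$ (as $\|Q^{1/2}DG\|^{-1}\in\bigcap_s L^s(\rho)$), I would split
$$\int g_n\psi\,d\sigma_n-\int g\psi\,d\sigma=\int(g_n-g)\psi\,d\sigma+\int g_n\psi\,(e^{-2U_{\alpha_n}}-e^{-2U})\,d\rho,$$
where the first term tends to $0$ by weak convergence, and the second is bounded, using $0\le e^{-2U_{\alpha_n}}-e^{-2U}\le e^{-2C}$ and Hölder, by $\|g_n\|_{L^q(\sigma_n)}\big(\int|\psi|^{q'}(e^{-2U_{\alpha_n}}-e^{-2U})\,d\rho\big)^{1/q'}$, whose last factor tends to $0$ since $e^{-2U_{\alpha_n}}\to e^{-2U}$ in $L^1(\rho)$ (Lemma \ref{Le:U}) and $|\psi|^{q'}\in L^r(\rho)$ for all $r$. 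Combining with the previous paragraph gives $\int_{G^{-1}(0)}\varphi\,\langle Du,DG\rangle\,\|Q^{1/2}DG\|^{-1}e^{-2U}d\rho=0$ for every $\varphi\in C^1_b(X)$; since $e^{-2U}>0$ and $\|Q^{1/2}DG\|>0$ $\rho$-a.e. on $G^{-1}(0)$ (Lemma \ref{Le:U} and $\|Q^{1/2}DG\|^{-1}\in L^1(\rho)$) and the traces of $C^1_b(X)$ functions are dense in $L^1(G^{-1}(0),\rho)$, this forces $\langle Du,DG\rangle=0$ $\rho$-a.e. on $G^{-1}(0)$.

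I expect the main difficulty to be this last step: reconciling the weak convergence of the traces, which naturally lives in the Sobolev space over $\C$ built on the fixed weight $e^{-2U}$, with the surface integrals carrying the moving weights $e^{-2U_{\alpha_n}}$, when a priori one only controls $g_n$ in the varying spaces $L^q(\sigma_n)$. What makes it go through is the monotonicity $e^{-2U}\le e^{-2U_{\alpha_n}}\le e^{-2C}$ — which both transfers $L^q(\sigma_n)$-bounds to $L^q(\sigma)$ and dominates the error integrand — together with the $L^1(\rho)$-convergence of the boundary weights from Lemma \ref{Le:U}. The concluding positivity/density step on $G^{-1}(0)$ is routine but relies on the properties of the Hausdorff--Gauss measure $\rho$ and of the quasicontinuous version of $G$ from \cite{Tracce,FP}.
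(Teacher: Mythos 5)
Your proposal is correct and follows essentially the same route as the paper's proof: you pass to the limit in the boundary identity \eqref{eq:ualpha} along the sequence from Proposition \ref{difficile}(iii), split the surface term into a fixed-weight part handled by weak convergence of $\langle Du_{\alpha_n},DG\rangle$ in $W^{1,p}_{1/2}(\C,\nu)$ (equivalently, the functional \eqref{dual} being in the dual) and an error term with the weight difference $e^{-2U_{\alpha_n}}-e^{-2U}$ controlled by the uniform trace estimate \eqref{maggtraccia} applied with $U_{\alpha_n}$ in place of $U$, and then reduce general $f\in L^2(\C,\nu)$ to $f\in C_b(X)$ via Proposition \ref{difficile}(ii). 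The only differences are cosmetic (the order of the reduction step and the final localization via density rather than monotone approximation of indicators of balls).
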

\begin{proof} To begin with, we consider data $f\in C_b(X)$. Using a subsequence of the approximating functions $u_{\alpha}$ and Lemma  \ref{Le:ualpha}, we shall prove that 
\begin{equation}
\label{intbordo}
  \int_{G^{-1}(0)}\varphi \, \frac{\langle Du , DG\rangle}{\|Q^{1/2}DG\|}\,  e^{-2U_{|G^{-1}(0)} }d\rho = 0, \quad \varphi \in C^1_b(X).
\end{equation}
Then, 
using Proposition \ref{difficile} we will prove that \eqref{intbordo} holds even if $f\in L^2(\C, \mu)$. 
From \eqref{intbordo} the statement will follow.

\vspace{3mm}
\noindent{\em First step: if $f\in C_b(X)$, \eqref{intbordo} holds.}
For every $\alpha >0$ let $u_{\alpha }$ be the weak solution to 
\eqref{Kappaalpha}. Fix $p\in (p'_0, 2)$, and let
 $(\alpha_n)$ be any vanishing sequence such that  
$(u_{\alpha_n|\C})$ converges weakly to $u$ in $W^{2,2}(\C, \nu)$ and in $W^{1,2}_{-1/2}(\C, \nu)$, and 
  $\langle Du_{\alpha_n}, DG\rangle _{|\C} $ converges weakly to $\langle Du, DG\rangle  $ in $W^{1,p}_{1/2}(\C, \nu)$. Such sequence exists by Proposition \ref{difficile}. Moreover, possibly choosing a further subsequence, we may assume that $\exp(-2U_{\alpha_n}(x))\to \exp(-2U_{|G^{-1}(0)}(x)$, $\rho$-a.e. in $G^{-1}(0)$. Indeed,   as we already remarked in the proof of Lemma \ref{Le:U}, since $\exp(-2U_{\alpha_n})$ converges to $\exp(-2U )$ in $W^{1,p_0}_{1/2}(\C,\mu)$, the trace of $\exp(-2U_{\alpha_n})$ at $G^{-1}(0)$ converges to the trace of $\exp(-2U )$ in $L^1(G^{-1}(0), \rho)$. 

By Lemma  \ref{Le:ualpha}, for every $\varphi \in C^1_b(X)$  and $n\in \N$ we have
\begin{equation}
\label{eq:ualphan}
\int_{\C} (\lambda u_{\alpha_n}- f)\varphi \,e^{-2U_{\alpha_n}}d\mu =    - \frac{1}{2} \int_{\C} \langle Du_{\alpha_n}, D\varphi\rangle e^{-2U_{\alpha_n}}d\mu 
 +  \frac{1}{2} \int_{G^{-1}(0)} \varphi\, \frac{\langle Du_{\alpha_n}, DG\rangle _{|G^{-1}(0)}}{\|Q^{1/2}DG\|}\,e^{-2U_{\alpha_n}}d\rho .
\end{equation}
Letting $n\to \infty$,  the proof of Proposition  
\ref{Pr:weak} yields
$$\lim_{n\to \infty}\int_{\C} (\lambda u_{\alpha_n}- f)\varphi \,e^{-2U_{\alpha_n}}d\mu 
  =  \int_{\C} (\lambda u -f)\,\varphi \,e^{-2U }d\mu , $$
$$\lim_{n\to \infty}\frac{1}{2} \int_{\C} \langle Du_{\alpha_n}, D\varphi\rangle e^{-2U_{\alpha_n}}d\mu  
 = \frac{1}{2} \int_{\C} \langle Du , D\varphi\rangle e^{-2U }d\mu .$$
We split the surface integral in \eqref{eq:ualphan} as $I_{1,n} + I_{2,n}$, where
$$I_{1,n}  =  \int_{G^{-1}(0)} \varphi\, \frac{\langle Du_{\alpha_n} , DG\rangle _{|G^{-1}(0)}}{\|Q^{1/2}DG\|}\,e^{-2U_{|G^{-1}(0)} }d\rho  $$
$$I_{2,n}=  \int_{G^{-1}(0)} \varphi\, \frac{\langle Du_{\alpha_n} , DG\rangle _{|G^{-1}(0)}}{\|Q^{1/2}DG\|}\,(e^{-2U_{\alpha_n} }- e^{-2U_{|G^{-1}(0)}})d\rho  .$$
Since $\varphi \exp(-2U_{|G^{-1}(0)}) \in L^q(G^{-1}(0), \rho)$ for every $q>1$, the mapping
\begin{equation}
\label{dual}
v\mapsto \int_{G^{-1}(0)} \varphi\, \frac{\langle Dv , DG\rangle _{|G^{-1}(0)}}{\|Q^{1/2}DG\|}\,e^{-2U_{|G^{-1}(0)}}d\rho
\end{equation}
is in the dual space of $W^{1,p}_{1/2}(\C, \nu)$. Since $\langle Du_{\alpha_n}, DG\rangle _{|\C} $ converges weakly to $\langle Du, DG\rangle  $ in $W^{1,p}_{1/2}(\C, \nu)$, then 
$$\lim_{n\to \infty} I_{1,n}=\int_{\partial\C} \varphi\, \frac{\langle Du , DG\rangle _{|G^{-1}(0)}}{\|Q^{1/2}DG\|}\,e^{-2U_{|G^{-1}(0)} }d\rho .$$
Choosing $q\in (1, p(p_0-1)/p_0)$ and using the H\"older inequality with respect to the measure $e^{-2U_{\alpha_n}}  \rho$
we get
$$\begin{array}{l}
|I_{2,n}|     \leq     \|\varphi\|_{\infty}
\ds  \int_{G^{-1}(0)} \frac{|\langle Du_{\alpha_n} , DG\rangle |}{\|Q^{1/2}DG\|}\,(1- e^{-2U_{|G^{-1}(0)} + 2U_{\alpha_n}}) e^{-2U_{\alpha_n}}  d\rho 
\\ 
 \\
 \leq    \ds \|\varphi\|_{\infty}\bigg(  \int_{G^{-1}(0)}  |\langle Du_{\alpha_n} , DG\rangle |^q e^{-2U_{\alpha_n} } d\rho \bigg)^{1/q}\bigg(  \int_{G^{-1}(0)} \bigg(\frac{ 1- e^{-2U_{|G^{-1}(0)} + 2U_{\alpha_n}}}{\|Q^{1/2}DG\|} \bigg)^{q'} e^{-2U_{\alpha_n}}  d\rho \bigg)^{1/q'}.\end{array}
$$
Now we use Proposition \ref{Pr:traccia},  with $U$ replaced by $U_{\alpha_n}$. Estimate \eqref{maggtraccia} yields
$$\begin{array}{l} \ds \int_{G^{-1}(0)}   |\langle Du_{\alpha_n} , DG\rangle _{|G^{-1}(0)} | ^q e^{-2U_{\alpha_n} } d\rho\\
\\
  \leq 
C_0  \|\langle Du_{\alpha_n} , DG\rangle \|_{W^{1,p}_{1/2}(\C,\nu_{\alpha_n})}^q e^{-2C_{\alpha_n} (p-q)/p}(1+\|Q^{1/2} DU_{\alpha_n}\|\,\|_{L^{p_0}(X, \mu)}). \end{array}$$
By Proposition \ref{difficile}(i),   $\|\langle Du_{\alpha_n} , DG\rangle \|_{W^{1,p}(\C, \nu_{\alpha_n})}$ is  bounded by a constant independent of $n$. Moreover, 
 $U_{\alpha_n}(x) \geq C$ for every $x$, so that $e^{-2C_{\alpha_n} (p-q)/p} \leq 
e^{-2C  (p-q)/p} $, and $\|\, \|Q^{1/2}DU_{\alpha_n}\|\,\|_{L^{p_0} (X, \mu)}$  is bounded by a constant independent of $n$ by Hypothesis \ref{Hyp}. 

On the other hand the integral $ \int_{G^{-1}(0)} ((1- e^{-2U_{|G^{-1}(0)} + 2U_{\alpha_n}})/\|Q^{1/2}DG\|)^{q'} e^{-2U_{\alpha_n}}  d\rho $ vanishes by dominated convergence as $n\to \infty$, since $e^{-2U_{|G^{-1}(0)} + 2U_{\alpha_n}}\to 0$ $\rho$-a.e. in $G^{-1}(0)$,  $1- e^{-2U + 2U_{\alpha_n}}\in [0,1]$, $e^{-2U_{\alpha_n}} \leq e^{-2C } $, and $1/ \|Q^{1/2}DG\|\in L^s(G^{-1}(0), \rho)$ for every $s$. 
Therefore, 
$$\lim_{n\to \infty} I_{2,n}=0.$$
So, letting $n\to \infty$ in \eqref{eq:ualphan} we get
$$\begin{array}{lll}
\ds \int_{\C} (\lambda u -f)\,\varphi \,e^{-2U }d\mu& =&\ds - \frac{1}{2} \int_{\C} \langle Du , D\varphi\rangle e^{-2U }d\mu \\
\\
&&\ds +  \frac{1}{2}\int_{G^{-1}(0)}\varphi\, \frac{\langle Du , DG\rangle _{|G^{-1}(0)}}{\|Q^{1/2}DG\|}\,e^{-2U_{|G^{-1}(0)} }d\rho ,
 \end{array}$$
and since $u$ is a weak solution to \eqref{Neumann}, then $\int_{G^{-1}(0)} \varphi\, \frac{\langle D u, DG\rangle _{|G^{-1}(0)}}{\|Q^{1/2}DG\|}\,e^{-2U_{|G^{-1}(0)} }d\rho =0$.

 \vspace{3mm}
\noindent{\em Second step: if $f\in L^2(\C, \nu)$,  \eqref{intbordo} holds.}

Approaching the null extension of $f$ to the whole $X$ by a sequence of functions $f_n\in C^1_b(X)$, the sequence of the solutions $u_n$ to \eqref{Neumann} with datum $f_n$ converge to $u$ in $W^{2,2}(\C, \nu)$ $\cap$ $W^{1,2}_{-1/2}(\C, \nu)$, by Corollary \ref{Cor:u}. By Proposition \ref{difficile}(ii), the sequence $(\langle Du_n, DG\rangle)$ converge to $\langle Du, DG\rangle$
in $W^{1,p}_{1/2}(\C, \nu)$,  for every $p\in [p'_0, 2)$.

For every $n$ we have $\int_{G^{-1}(0)} \varphi\, \frac{\langle D u_n, DG\rangle _{|G^{-1}(0)}}{\|Q^{1/2}DG\|}\,e^{-2U_{|G^{-1}(0)} }d\rho =0$, and 
since the mapping \eqref{dual}
is continuous from $W^{1,p}(\C, \nu)$ to $\R$, letting $n\to \infty$ yields that $u$ satisfies \eqref{intbordo}. 

\vspace{3mm}
\noindent{\em Third step: $\langle Du , DG\rangle _{|G^{-1}(0)}=0$, $\rho$--a.e.}

Let $x\in X$, $r>0$, and let $(\varphi_n)$ be  a sequence of nonnegative functions belonging to $C^1_b(X)$, that converge monotonically to $\one_{B(x, r)}$. Then, 
$$\begin{array}{ll}
0 & =   \ds \lim_{n\to \infty} \int_{G^{-1}(0)} \varphi_n\, \frac{|\langle Du, DG\rangle _{|G^{-1}(0)}|}{\|Q^{1/2}DG\|}\,e^{-2U_{|G^{-1}(0)}}d\rho 
\\
\\
& = \ds \int_{G^{-1}(0) \cap B(x,r)}  \frac{|\langle Du , DG\rangle _{|G^{-1}(0)}|}{\|Q^{1/2}DG\|}\,e^{-2U_{|G^{-1}(0)}}d\rho ,
\end{array}$$
and since $d\rho$ is a Borel measure, $|\langle Du , DG\rangle _{|G^{-1}(0)}| \,e^{-2U_{|G^{-1}(0)}} =0$, $\rho$--a.e.
By Lemma \ref{Le:U}, $e^{-2U_{|G^{-1}(0)}} $ cannot vanish on a set with positive surface measure. It follows that $ \langle Du, DG\rangle _{|G^{-1}(0)} =0$, $\rho$--a.e.
 \end{proof}

\section{Applications}

\subsection{Admissible sets $\C$.}
Admissible sets $\C$ are for instance halfplanes such as $\C= \{x\in X:\; \langle x, y\rangle \leq  c\}$, for any $y\in X$ and $c\in \R$, balls and ellipsoids 
such as $\C= \{x\in X:\; \sum_{k\in \N} \alpha_k x_k^2 \leq  r^2\}$, where $(\alpha_k)$ is any bounded sequence with positive values. In these cases, $G(x) = r^2 - \sum_{k\in \N} \alpha_k x_k^2$ is smooth and Hypothesis \ref{HypG} is easily seen to hold. See \cite{Tracce}. 
 
We could also take an unbounded sequence $\alpha_k$, still satisfying 
\begin{equation}
\label{ulteriore}
\sum_{k=1}^{\infty}  \alpha_k^2\lambda_k<+\infty . 
\end{equation}
Indeed, in this case we have also $\sum_{k=1}^{\infty}  \alpha_k \lambda_k<+\infty  $, so that $G$ is $C_{2,p}$-quasicontinuous function for every $p>1$, and Hypothesis \ref{HypG}(i) is satisfied by \cite[sect. 5.3]{Tracce}. 
Moreover it is easy to see that $\C = G^{-1}(-\infty, 0]$ is convex and closed. However, in this case $G$ is not continuous, and the interior part of $\C$ is empty.

Another class of admissible domains, that may be seen as generalization of halfplanes,  are the regions below graphs of concave functions. For every $k\in \N$ set $X= $ span $e_k \oplus Y_k$, where $Y_k$ is the orthogonal complement of the linear span of $e_k$. The measure $\mu$ may be seen as the product measure of two Gaussian measures on span $e_k $ and on $Y_k$, precisely $\mu \circ \Pi_k^{-1}$ and $\mu_Y:= \mu \circ (I-\Pi_k)^{-1}$, where $\Pi_k$ is the orthogonal projection on the linear span of $e_k$, $\Pi_kx = \langle  x, e_k\rangle e_k = x_k e_k$. 

For every concave $F:Y_k\mapsto \R$, the set $\C = \{x: \;x_k \leq  F((I-\Pi_k)x)\}$ is convex. If in addition $F\in W^{2,p}_{1/2}(Y, \mu_Y) \cap W^{1,p}_{0}(Y, \mu_Y)$ for every $p>1$, then the function $G(x)= x_k- F((I-\Pi_k)x)$ satisfies Hypothesis \ref{HypG}. Indeed, it is continuous, it belongs to 
$W^{2,p}_{1/2}(X, \mu) \cap W^{1,p}_{0}(X, \mu)$ for every $p>1$, and $\|Q^{1/2}DG\| \geq \lambda_k^{1/2}$, so that $1/\|Q^{1/2}DG\|\in L^{p}(X, \mu)$ for every $p>1$.

\subsection{Kolmogorov equations of stochastic reaction--diffusion systems.}
\label{reactoin-diffusion}

We choose here  $X=L^2((0,1), d\xi)$, and $D(A)=W^{2,2}((0, \pi), d\xi) \cap W^{1,2}_0((0, \pi), d\xi)$, $Ax = x''$. 
$X$ is endowed with the Gaussian measure $\mu $  with mean $0$ and covariance $Q := -\frac12\,A^{-1}$. As   orthonormal basis of $X$ 
we choose $\{e_k(\xi) := \sqrt{2}\sin(k \pi \xi), \;k\in \N \}$ that
consists of  eigenfunctions of $Q$ with eigenvalues $\lambda_k = 1/(2k^2 \pi^2)$. 

The function $U$ is defined by 
\begin{equation}
\label{e4.2}
U(x)= \left\{ \begin{array}{lll}  & \ds \int_0^{1} \Phi(x(\xi))d\xi, & x\in L^{p}(0,1), 
\\
\\
 & +\infty , & x \notin L^{p}(0,1). 
\end{array}\right. 
\end{equation}
where $\Phi:\R\mapsto \R$ is a $C^1$ convex lowerly bounded function, such that 
\begin{equation}
\label{e4.4}
 |\Phi'(t)| \leq C(1+ |t|^{p-1}), \quad  t\in \R, 
 \end{equation}
for some $C>0$, $p\geq 1$. Note that \eqref{e4.4} implies $\Phi (t)\leq C_1(1+ |t|^{p})$ for every $t$, so that $U(x)<+\infty$ for every $x\in L^p(0,1)$. 

In the paper \cite{DPL0} we proved that $U$ satisfies the hypothesis of Lemma \ref{regU-Moreau}, with any $p_1>1$ and
$$D_0U(x) = \Phi'(x), \quad \mu-a.e. \;x\in X. $$
Therefore, Hypothesis \ref{Hyp} is satisfied taking  as $U_{\alpha}$ the Moreau--Yosida approximations of $U$. 
 
If $G:X\mapsto \R$ satisfies Hypothesis \ref{HypG}, the results of Corollary \ref{Cor:u} and of Theorem \ref{finale} hold. Namely, the weak solution $u$ to  \eqref{Neumann} in $\C = G^{-1}((-\infty, 0])$ belongs to $W^{2,2}(\C, \nu)\cap W^{1,2}_{-1/2}(\C, \nu)$,  it satisfies \eqref{stimaNeu}, and 
$\langle Du, DG\rangle _{|G^{-1}(0)}=0$, $\rho$-a.e. 

In this setting, \eqref{Neumann} is the Kolmogorov equation of the reaction--diffusion problem
$$\left\{ 
\begin{array}{l}
\displaystyle{dX = \bigg(\frac{\partial^2 X}{\partial \xi^2} -\Phi'(X)\bigg)}dt+ N_{\mathcal C}(X(t))dt \ni dW(t), \quad t>0,\; \xi\in (0,1),
\\
\\
X(t,0)= X(t,1)=0, \quad t>0,
\\
\\
X(0,x)=x,\quad   \xi\in (0,1).
 \end{array}\right.$$

\subsection{Kolmogorov equations of Cahn--Hilliard type equations.}
\label{Cahn--Hilliard}

Cahn--Hilliard type operators are characterized by a fourth order linear part and a nonlinearity of the type $u\mapsto \partial^2/\partial \xi^2(f\circ u )$.  
In the above section we have interpreted the nonlinearity $x\mapsto \Phi'\circ x$ as the gradient of a suitable function in the space $X= L^2(0,1)$. For a nonlinearity of the type $x\mapsto  \partial ^2/\partial \xi ^2 ( \Phi'\circ x)$ be a gradient, we have to change reference space and replace $L^2(0,1)$ by a  Sobolev space with negative exponent. It is convenient to work with functions with null average, setting
$$\overline{x} = \int_0^1 x(\xi)d\xi, \quad x\in L^2(0,1), $$
$$\mathcal{H} := \{ x\in H^1(0, 1): \;\overline{x} =0\}, \quad \|x\|_{\mathcal{H}} := \|x'\|_{L^2(0,1)}. $$
We take as $X$ the dual space of $\mathcal{H}$, endowed with the dual norm. We consider the spaces $L^p(0,1)$ as   subspaces of $X$, identifying any $x\in L^p(0,1)$ with the element $y\mapsto \int_0^1 x(\xi)y(\xi)d\xi$ of $X$. 

A realization of the negative second order derivative is a canonical isometry from $\mathcal{H} $ to $X$. More precisely, for every $x\in \mathcal{H}$ we define
$$Bx (y) =  \int_0^1 x'(\xi)y'(\xi) d\xi, \quad y\in \mathcal{H}, $$
so that for every $x\in \mathcal{H}$ we have $\|Bx\|_{X} = \sup_{y\neq 0} 
\langle  x, y\rangle _{\mathcal{H}} /\|y\|_{\mathcal{H}} = \|x\|_{\mathcal{H}}$.  If $x\in H^2(0,1)\cap \mathcal{H}$ and $x'(0)= x'(1)=0$, then $Bx(y) = -\langle x'', y\rangle _{L^2(0,1)}$ for every $y\in \mathcal{H}$. Therefore $B$ may be seen as an extension to $\mathcal{H}$ of the negative second order derivative with Neumann boundary condition.   It follows that if $y\in X$ and $g\in L^2(0,1)$, then $\langle B^{-1}y,g\rangle _{L^2(0,1)} = \langle  y,g\rangle _{X}$. 

The functions $e_k(\xi)=  \sqrt{2}\cos(k\pi\xi)/k\pi $, $k\in \N$, constitute an orthonormal basis of $\mathcal{H}$, and therefore, setting  $ f_k:= Be_k = k^2\pi^2 e_k$, the set $\{ f_k:\; k\in \N\} $  is an orthonormal basis of $X$. The operator $A := -B^2: D(B^2)\mapsto X$ is a realization of the negative fourth order derivative with null boundary condition for the first and third order derivatives in $X$, and we have $A^{-1}f_k = - f_k/k^4\pi^4$. Therefore $Q:= -A^{-1}/2$ is of trace class, and the Gaussian measure $\mu$ in $X$ with mean $0$ and covariance $Q$ is well defined. 

As in section \ref{reactoin-diffusion}, let $\Phi:\R\mapsto \R$ be any regular convex lowerly bounded function, satisfying \eqref{e4.4}, and let $U$ be defined by \eqref{e4.2}. It is possible to see  that $U\in  W^{1,q}_{1/2}(H, \mu)$ for every $q>1$, while in general
$U\notin  W^{1,2}_0(H, \mu)$. The proof given in \cite{DPL0} in a slightly different context works also in the present situation. Also, rephrasing the proof of Prop. 6.2 and Cor. 6.3 of \cite{DPL0} yields the following lemma, 

\begin{Lemma}
\label{Le:ellittiche}
For every $p\geq 1$, $\mu(\{ x\in L^p(0,1):\;\overline{x}=0\})=1$, and in addition $\int_{X} \|x\|_{L^p(0,1)}^q d\mu <+\infty $ for every $q>1$.  
\end{Lemma}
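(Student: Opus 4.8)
The plan is to establish the measure-theoretic facts in Lemma~\ref{Le:ellittiche} by transferring known integrability properties of Gaussian measures on Hilbert spaces to the present negative-order Sobolev space $X$, exploiting the explicit diagonalization $Qf_k = f_k/(2k^4\pi^4)$. First I would make precise the identification of $L^p(0,1)$ with a subspace of $X$ and compute how the basis $\{f_k\}$ of $X$ pairs with smooth functions; the point is that for $x\in L^2(0,1)$ with $\overline x=0$ one has $\langle x, f_k\rangle_X = \langle B^{-1}x, f_k\rangle_{L^2} = \langle x, B^{-1}f_k\rangle_{L^2} = \langle x, e_k\rangle_{L^2}$, so the $X$-coordinates of such an $x$ are exactly its Fourier--cosine coefficients against $e_k(\xi)=\sqrt2\cos(k\pi\xi)/k\pi$. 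Thus a sample point $x=\sum_k \xi_k f_k$ under $\mu$, with $\xi_k$ independent centered Gaussians of variance $1/(2k^4\pi^4)$, corresponds formally to the random function $\sum_k \xi_k k^2\pi^2 \sqrt2\cos(k\pi\xi) = \sum_k (k^2\pi^2\xi_k)\sqrt2\cos(k\pi\xi)$, a random Fourier series whose coefficients $\eta_k := k^2\pi^2\xi_k$ are independent centered Gaussians of variance $1/(2)$ times $1/(k^0)$ --- wait, one must recompute: $\mathrm{Var}(\eta_k) = k^4\pi^4 \cdot 1/(2k^4\pi^4) = 1/2$. So the would-be function has i.i.d.\ coefficients, which does \emph{not} lie in $L^2(0,1)$; hence the correct viewpoint is that $\mu$ lives on a space of genuinely negative-order distributions and one must argue more carefully about $L^p$-membership, presumably using that the relevant reaction--diffusion analogue was already handled in \cite{DPL0}.

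Accordingly, the main line of argument I would follow is to invoke and adapt the corresponding result of \cite{DPL0} (cited as Prop.~6.2 and Cor.~6.3 there). The structure should be: show $\mu(\{x\in L^p(0,1):\overline x=0\})=1$ by representing $x$ under $\mu$ as a random distribution and checking $L^p$-integrability of an appropriate regularization, or more cleanly by exhibiting $L^p(0,1)\cap\{\overline x=0\}$ as a measurable linear subspace of full measure using a Fernique-type / zero--one-law argument: any measurable linear subspace has $\mu$-measure $0$ or $1$, and one rules out $0$ by producing one point of $\mathcal H$ (hence of every $L^p$) in the topological support of $\mu$, e.g.\ any finite linear combination of the $f_k$. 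Then $\overline x=0$ $\mu$-a.s.\ because the functional $x\mapsto\overline x$ is, in this identification, the pairing with the constant function $1\in\mathcal H$... actually $1\notin\mathcal H$ since $\mathcal H$ requires null average, so $\overline x=0$ is built into the identification itself and needs no argument beyond the construction of $X$ as the dual of $\mathcal H$. For the moment estimate $\int_X\|x\|_{L^p(0,1)}^q\,d\mu<\infty$, once almost-sure membership in $L^p(0,1)$ is known, finiteness of all moments is automatic from Fernique's theorem: $x\mapsto\|x\|_{L^p(0,1)}$ is a measurable seminorm, finite $\mu$-a.e., hence by Fernique there is $\varepsilon>0$ with $\int_X e^{\varepsilon\|x\|_{L^p}^2}d\mu<\infty$, which gives all polynomial moments.

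Concretely I would write the proof in two short steps. Step 1: $\|x\|_{L^p(0,1)}$ is a $\mu$-measurable seminorm that is finite almost everywhere --- this is the content imported from \cite[Prop.~6.2]{DPL0}, which treats the analogous Ornstein--Uhlenbeck/reaction--diffusion measure on a scale of spaces and whose proof (based on estimating $\mathbb E\|P_n x\|_{L^p}^q$ uniformly in $n$ via hypercontractivity / Sobolev embedding in the Fourier variables and then passing to the limit) carries over verbatim once one notes the covariance eigenvalues here, $\lambda_k = 1/(2k^4\pi^4)$, decay even faster than in the reaction--diffusion case, so all the series converge a fortiori. Since $\overline x=0$ holds identically on $X$ by construction, this already gives $\mu(\{x\in L^p(0,1):\overline x=0\})=\mu(\{x\in L^p(0,1)\})=1$. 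Step 2: apply Fernique's theorem (e.g.\ \cite[Thm.~2.8.5]{Boga}) to the seminorm $\|\cdot\|_{L^p(0,1)}$ to obtain exponential integrability, hence $\int_X\|x\|_{L^p(0,1)}^q d\mu<\infty$ for every $q>1$. The main obstacle, and the only place real work is hidden, is Step 1: proving that a typical sample of $\mu$ actually lies in $L^p(0,1)$ and not merely in some larger distribution space. This is precisely why the lemma points to \cite{DPL0} rather than giving a self-contained proof; the uniform-in-$n$ bound on $\mathbb E\|P_nx\|_{L^p(0,1)}^q$ must be extracted from the Gaussian structure, using that on the linear span of $f_1,\dots,f_n$ the measure $\mu\circ P_n^{-1}$ is a finite-dimensional Gaussian whose $L^p(0,1)$-moments can be controlled by Khintchine/Gaussian hypercontractivity together with the rapid decay of $\lambda_k$, and then $P_n x\to x$ in $X$ forces, along a subsequence, a.e.\ and $L^p$ convergence on a full-measure set.
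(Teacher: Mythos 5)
The paper offers no internal proof of this lemma to compare against: it is stated as following from ``rephrasing the proof of Prop.~6.2 and Cor.~6.3 of \cite{DPL0}'', so your general scaffolding (measurable seminorm plus zero--one law for the full-measure statement, Fernique for the moments, the real content imported from \cite{DPL0}) is a reasonable reconstruction, and your observation that $\overline{x}=0$ costs nothing because the embedding $L^p(0,1)\hookrightarrow X=\mathcal{H}'$ annihilates constants is correct.

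However, your preliminary computation contains an arithmetic error that gives you a false picture of the measure and misplaces the difficulty. Since $e_k(\xi)=\sqrt2\cos(k\pi\xi)/(k\pi)$, one has $f_k=k^2\pi^2e_k=k\pi\sqrt2\cos(k\pi\xi)$, not $k^2\pi^2\sqrt2\cos(k\pi\xi)$. Hence if $x=\sum_k\xi_kf_k$ with $\xi_k$ independent centered Gaussians of variance $\lambda_k=1/(2k^4\pi^4)$, the coefficient of the $L^2$-orthonormal function $\sqrt2\cos(k\pi\xi)$ is $\eta_k=k\pi\,\xi_k$, whose variance is $k^2\pi^2\cdot(2k^4\pi^4)^{-1}=1/(2k^2\pi^2)$ --- summable, not constant. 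In particular $\mathbb{E}\,\|x\|_{L^2(0,1)}^2=\sum_k 1/(2k^2\pi^2)=1/12<+\infty$, so $\mu$ is concentrated on $L^2(0,1)$ and the samples are honest functions, not ``genuinely negative-order distributions''. Had your variance computation been right, the lemma would be \emph{false} for $p\ge2$, which should have been a warning sign; instead you proceed to assert that \cite{DPL0} proves the opposite of what you just computed. Once the normalization is fixed, the ``hard'' Step~1 essentially evaporates: the pointwise variance $C(\xi,\xi)=\sum_k\cos^2(k\pi\xi)/(k^2\pi^2)$ is bounded by $1/6$, so by Fubini and the Gaussian moment identity $\mathbb{E}\,\|P_nx\|_{L^p(0,1)}^p=\int_0^1\mathbb{E}|P_nx(\xi)|^p\,d\xi\le c_p\,6^{-p/2}$ uniformly in $n$, which gives $x\in L^p(0,1)$ $\mu$-a.e.\ and the $q=p$ moment in one line; general $q$ then follows from Fernique (or Gaussian moment equivalence) exactly as you say. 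So the strategy is salvageable, but the quantitative heart of your writeup is wrong and must be redone with the correct normalization of $e_k$.
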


Let us check that Hypothesis \ref{Hyp} is satisfied. The approximating functions $U_{\alpha}$ are constructed as in \cite{DPDT}, namely we consider the Moreau--Yosida approximations of $\Phi$, 
$$\Phi_{\alpha} ( r) = \inf\{ \Phi(s) + (r-s)^2/2\alpha : s\in \R\}, \quad r\in \R, $$
and we define, for $\alpha >0$, 
$$U_{\alpha}(x) = \int_0^1 \Phi_{\alpha}((I+\alpha B)^{-1}x (\xi))d\xi, \quad  x\in X. $$

\begin{Lemma}
Let $\Phi \in C^2(\R)$ be a convex and lowerly bounded function satisfying 
\begin{equation}
\label{e4.5}
 |\Phi''(t)| \leq K(1+ |t|^{p-2}), \quad  t\in \R, 
 \end{equation}
for some $K>0$, $p\geq 2$. 
Then the functions  $U_{\alpha} $ satisfy  Hypothesis \ref{Hyp}. 
\end{Lemma}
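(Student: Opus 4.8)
The plan is to check the three items of Hypothesis~\ref{Hyp} for $U_\alpha(x)=\int_0^1\Phi_\alpha\big((I+\alpha B)^{-1}x(\xi)\big)\,d\xi$. Write $T_\alpha:=(I+\alpha B)^{-1}$ and let $\Phi_\alpha$ be the scalar Moreau--Yosida approximations of $\Phi$, so that (see \cite[Ch.~2]{Brezis} and \eqref{M-Y}) $\Phi_\alpha\in C^{1,1}(\R)$ is convex, $\Phi_\alpha'=\Phi'\circ J_\alpha$ with $J_\alpha=I-\alpha\Phi_\alpha'$, $\Phi_\alpha'$ is $1/\alpha$--Lipschitz, $\inf\Phi_\alpha=\inf\Phi=:C$, $\Phi_\alpha\le\Phi$, $|\Phi_\alpha'|\le|\Phi'|$, $\Phi_\alpha\to\Phi$ uniformly on bounded sets as $\alpha\to0$, and $|J_\alpha r-r|=\alpha|\Phi_\alpha'(r)|\le\alpha|\Phi'(r)|$. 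Denote by $\ell_g\in X$ the element associated with $g\in L^2(0,1)$. The first step is to record a few facts about $T_\alpha$: in the basis $\{f_k\}$ it is diagonal with entries $(1+\alpha k^2\pi^2)^{-1}$, hence bounded and self--adjoint on $X$; it maps $X$ into $\mathcal{H}\hookrightarrow C([0,1])$; $BT_\alpha=\alpha^{-1}(I-T_\alpha)$ is bounded on $X$ with norm $\le1/\alpha$; and, since $B$ extends the negative Neumann Laplacian, on zero--average functions of $L^q(0,1)$ the operator $T_\alpha$ acts as $(I-\alpha\,d^2/d\xi^2)^{-1}$ with Neumann conditions, i.e.\ as the resolvent of the Neumann heat semigroup. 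In particular $T_\alpha$ is positivity preserving, preserves constants and integrals over $(0,1)$, is a contraction on every $L^q(0,1)$ with $1\le q\le\infty$, and $T_\alpha g\to g$ in $L^q(0,1)$ as $\alpha\to0$ for $g\in L^q(0,1)$, $q<\infty$. I shall also use $Q^{1/2}=2^{-1/2}B^{-1}$ and the inequality $\|\ell_g\|_X\le c\,\|g\|_{L^1(0,1)}$, which follows from $\mathcal{H}\hookrightarrow C([0,1])$.

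For item~(i): $U_\alpha$ is finite on all of $X$ because $T_\alpha x\in C([0,1])$, and it is convex, being the composition of the affine map $x\mapsto T_\alpha x$ with the convex $\Phi_\alpha$ followed by integration. By the chain rule $U_\alpha$ is Fr\'echet differentiable at every $x\in X$ with
\[
DU_\alpha(x)=BT_\alpha\,\ell_{\Phi_\alpha'(T_\alpha x)} ;
\]
since this gradient is the composition $BT_\alpha\circ\iota\circ N_{\Phi_\alpha'}\circ T_\alpha$ of the bounded linear maps $T_\alpha\colon X\to C([0,1])$, $\iota\colon L^\infty(0,1)\hookrightarrow X$, $BT_\alpha\colon X\to X$ with the Nemytskii operator $N_{\Phi_\alpha'}$ on $C([0,1])$ (Lipschitz, constant $\le1/\alpha$), it is globally Lipschitz from $X$ to $X$. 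For item~(ii): $\Phi_\alpha\ge C$ gives $U_\alpha\ge C$ for every $\alpha$, and for $\mu$--a.e.\ $x$ — by Lemma~\ref{Le:ellittiche}, $x\in L^p(0,1)$ — Jensen's inequality for the Markov operator $T_\alpha$ and the convex $\Phi$ yields $\Phi(T_\alpha x)\le T_\alpha(\Phi\circ x)$ pointwise on $(0,1)$, whence, using $\Phi_\alpha\le\Phi$ and the integral preservation of $T_\alpha$,
\[
U_\alpha(x)=\int_0^1\Phi_\alpha(T_\alpha x)\,d\xi\le\int_0^1 T_\alpha(\Phi\circ x)\,d\xi=\int_0^1\Phi(x)\,d\xi=U(x),
\]
the case $x\notin L^p(0,1)$ being trivial.

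For item~(iii) I would prove convergence in $W^{1,p_0}_{1/2}(X,\mu)$ for every $p_0>1$, which in particular yields some $p_0>2$; membership of each $U_\alpha$ in these spaces was already noted after Hypothesis~\ref{Hyp}. For $\mu$--a.e.\ $x$ (i.e.\ $x\in\bigcap_q L^q(0,1)$, by Lemma~\ref{Le:ellittiche}) one has $T_\alpha x\to x$ in $L^p(0,1)$ and $\Phi_\alpha\to\Phi$ uniformly on bounded sets, so $U_\alpha(x)\to U(x)$ by Fatou together with the upper bound of~(ii); since $C\le U_\alpha\le U$ with $U\in L^{p_0}(X,\mu)$ (by \eqref{e4.4} and Lemma~\ref{Le:ellittiche}), dominated convergence gives $U_\alpha\to U$ in $L^{p_0}(X,\mu)$. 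For the gradients, the preliminaries give $Q^{1/2}DU_\alpha(x)=2^{-1/2}B^{-1}BT_\alpha\ell_{\Phi_\alpha'(T_\alpha x)}=2^{-1/2}T_\alpha\ell_{\Phi_\alpha'(T_\alpha x)}$, while the analogous (simpler) computation — using that $U\in\bigcap_q W^{1,q}_{1/2}(X,\mu)$ — gives $Q^{1/2}DU(x)=2^{-1/2}\ell_{\Phi'(x)}$. Splitting
\[
T_\alpha\ell_{\Phi_\alpha'(T_\alpha x)}-\ell_{\Phi'(x)}=T_\alpha\,\ell_{\Phi_\alpha'(T_\alpha x)-\Phi'(x)}+(T_\alpha-I)\ell_{\Phi'(x)},
\]
the second summand is bounded in $X$ by $2c\,\|\Phi'(x)\|_{L^1(0,1)}\le c'\big(1+\|x\|_{L^{p-1}(0,1)}^{p-1}\big)\in L^{p_0}(X,\mu)$ and tends to $0$ pointwise because $T_\alpha\to I$ strongly; the first is bounded by $c\,\|\Phi_\alpha'(T_\alpha x)-\Phi'(x)\|_{L^1(0,1)}\le c\big(\|\Phi_\alpha'(T_\alpha x)-\Phi'(T_\alpha x)\|_{L^1}+\|\Phi'(T_\alpha x)-\Phi'(x)\|_{L^1}\big)$, and using \eqref{e4.5} in the form $|\Phi'(t)-\Phi'(s)|\le K|t-s|(1+|t|^{p-2}+|s|^{p-2})$, the bound $|J_\alpha r-r|\le\alpha|\Phi'(r)|$, H\"older's inequality and the $L^q(0,1)$--contractivity of $T_\alpha$, both pieces are dominated, uniformly for $\alpha\le1$, by $c\big(1+\|x\|_{L^N(0,1)}^N\big)\in L^{p_0}(X,\mu)$ for a suitable $N=N(p)$, and tend to $0$ for $\mu$--a.e.\ $x$ since $T_\alpha x\to x$ in every $L^q(0,1)$. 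Dominated convergence then gives $Q^{1/2}DU_\alpha\to Q^{1/2}DU$ in $L^{p_0}(X,\mu;X)$, completing~(iii).

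The hardest point will be the gradient part of~(iii): identifying $Q^{1/2}D$ of the Cahn--Hilliard-type nonlocal functionals through the negative--Sobolev structure (so that $Q^{1/2}D$ of $x\mapsto\int_0^1\psi(x(\xi))\,d\xi$ is $2^{-1/2}\ell_{\psi'(x)}$, with the extra factor $T_\alpha$ for $U_\alpha$), and then carrying the polynomial--growth bookkeeping through the Moreau--Yosida approximation so as to produce an $\alpha$--uniform, $\mu$--integrable dominating function; it is exactly here that the stronger hypothesis \eqref{e4.5} on $\Phi''$ is needed, rather than merely \eqref{e4.4}. The Jensen step in~(ii), which rests on recognising $T_\alpha$ as the resolvent of the Neumann heat semigroup, is the other point that requires a little care.
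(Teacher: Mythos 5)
Your proof is correct and follows essentially the same route as the paper's: item (i) via the chain rule and the Lipschitz continuity of $\Phi_\alpha'$, item (ii) via Jensen's inequality for the Markov resolvent $(I+\alpha B)^{-1}$ together with $\Phi_\alpha\le\Phi$, and item (iii) via the identification $Q^{1/2}DU_\alpha=2^{-1/2}(I+\alpha B)^{-1}\ell_{\Phi_\alpha'((I+\alpha B)^{-1}x)}$, the $L^q(0,1)$-contractivity of the resolvent, pointwise convergence, and an $\alpha$-uniform polynomial dominating function integrable by Lemma \ref{Le:ellittiche}. The only cosmetic differences are that you phrase the Jensen step through the Neumann heat semigroup instead of the explicit nonnegative kernel, and you organize the gradient convergence by an explicit two-term splitting where the paper reruns its $f_\alpha+g_\alpha$ decomposition with $\Phi_\alpha'$ in place of $\Phi_\alpha$.
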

\label{Le:Phi}
\begin{proof}
We see immediately that each $U_{\alpha}$ is convex,  lowerly bounded by $C:=\inf \Phi$, and of class $C^2$. Moreover for every $x$, $y\in X$ we have
$$\begin{array}{lll}
DU_{\alpha}(x) (y) & = & \displaystyle{\int_0^1 \Phi_{\alpha}'((I+\alpha B)^{-1}x (\xi)) (I+\alpha B)^{-1}y (\xi)\,d\xi}
\\
\\
& = & \displaystyle{\int_0^1( B (I+\alpha B)^{-1}\Phi_{\alpha}'((I+\alpha B)^{-1}x ))(\xi) B^{-1}y (\xi)\,d\xi }
\\
\\
& = & \langle  B(I+\alpha B)^{-1}  \Phi_{\alpha}'((I+\alpha B)^{-1}x), y\rangle_X
\end{array}$$
so that
\begin{equation}
\label{gradX}
DU_{\alpha}(x)= B(I+\alpha B)^{-1}(\Phi_{\alpha}'((I+\alpha B)^{-1}x)  . 
\end{equation}
Since $\Phi_{\alpha}':\R\mapsto \R$ is Lipschitz continuous, so is $DU_{\alpha}:X\mapsto X$, and Hypothesis \ref{Hyp}(i) is satisfied.

An argument taken from \cite{DPDT} shows that $U_{\alpha}(x)\leq U(x)$ for almost every $x\in X$. Indeed, for $x\in L^2(0,1)$, 
$$(I+\alpha B)^{-1}x (\xi) = \int_0^1 k(\xi, s)x(s)ds, $$
where $k(\xi, s)\geq 0$ for each $(\xi, s)\in (0,1)^2$, and $ \int_0^1 k(\xi, s)ds =1$ for every $\xi$. Then, 
$$\Phi_{\alpha}((I+\alpha B)^{-1}x (\xi))\leq \Phi ((I+\alpha B)^{-1}x (\xi))\leq (I+\alpha B)^{-1}(\Phi (x))(\xi), $$
where the last inequality follows from the Jensen inequality. Then, Hypothesis \ref{Hyp}(ii) is satisfied. 

Now, let us prove that $U_{\alpha}(x)$ converges to $U(x)$ as $\alpha \to 0$, for a.e. $x\in X$. Note that 
\eqref{e4.5} implies 
\begin{equation}
\label{comp}
(i)\; |\Phi'(t)| \leq C_1(1+ |t|^{p-1}),\quad (ii)\;|\Phi(t)| \leq C_0(1+ |t|^{p}), \quad  t\in \R, 
 \end{equation}
for some $C_1$, $C_0>0$. Moreover,  \eqref{comp}(ii) implies that $\Phi\circ x\in L^1(0,1)$, for every $x\in L^p(0, 1)$. 
Moreover, let us recall that  
$\lim_{\alpha\to 0}(I+\alpha B)^{-1}x =x$ in $L^p(0,1)$, and $\|(I+\alpha B)^{-1}x\|_{L^p(0,1)}\leq \|x\|_{L^p(0,1)}$ for every $x\in L^p(0,1)$ with zero average. 

Let  $x\in L^p(0,1)$ have zero average. Then, we split
 $\Phi_{\alpha} ((I+\alpha B)^{-1}x(\xi)) - \Phi(x(\xi)) = f_{\alpha}(\xi) + g_{\alpha}(\xi)$, with 
$f_{\alpha}(\xi) := \Phi_{\alpha} ((I+\alpha B)^{-1}x(\xi)) - \Phi_{\alpha}(x(\xi))$, $g_{\alpha}(\xi):= \Phi_{\alpha}(x(\xi))- \Phi(x(\xi))$, and using \eqref{comp}(i) we get 
$$\begin{array}{l}
|f_{\alpha}(\xi)| = \displaystyle{\bigg| \int_0^1} [\Phi_{\alpha}'(\sigma (I+\alpha B)^{-1}x(\xi)) + (1-\sigma)x(\xi)]d\sigma \, ((I+\alpha B)^{-1}x(\xi)- x(\xi))\bigg|
\\
\\
\leq C_1(1 + 2^{p-2}|(I+\alpha B)^{-1}x(\xi)|^{p-1} + |x(\xi)|^{p-1})\,|(I+\alpha B)^{-1}x(\xi)- x(\xi)|
\end{array}$$
so that $f_{\alpha}\in L^1(0,1)$. Using the H\"older inequality, we get 
$$\begin{array}{l}
\|f_{\alpha}\|_{L^1(0,1)} \leq C_1\|(I+\alpha B)^{-1}x - x\|_{L^1(0,1)} + 
\\
\\
+ C_1 2^{p-2}(\|(I+\alpha B)^{-1}x\|_{L^p(0,1)}^{p-1} + \|x\|_{L^p(0,1)}^{p-1})\|(I+\alpha B)^{-1}x -x\|_{L^p(0,1)}
\\
\\
\leq C_1\|(I+\alpha B)^{-1}x - x\|_{L^1(0,1)} + C_1 2^{p-1}\|x\|_{L^p(0,1)}^{p-1}\|(I+\alpha B)^{-1}x -x\|_{L^p(0,1)}.
\end{array}$$
Therefore, $\|f_{\alpha}\|_{L^1(0,1)} $ vanishes as $\alpha\to 0$. 
Moreover, $\|g_{\alpha}\|_{L^1(0,1)} $ vanishes too as $\alpha\to 0$, by monotone convergence. This implies that  $U_{\alpha}(x)$ converges to $U(x)$ as $\alpha \to 0$, for all $x\in L^p(0,1)$ with null average. 

Now we claim that $U_{\alpha} $ converges to $U $ as $\alpha \to 0$, in $L^q(X, \mu)$, for every $q>1$. 
We have
$$\int_X \bigg|\int_0^1 (\Phi_{\alpha} ((I+\alpha B)^{-1}x(\xi)) - \Phi(x(\xi)) )d\xi \bigg|^q\mu(dx) 
\leq 2^{q-1}\int_X (C_1\|(I+\alpha B)^{-1}x - x\|_{L^1(0,1)} $$
$$ +C_1 2^{p-1}\|x\|_{L^p(0,1)}^{p-1}\|(I+\alpha B)^{-1}x -x\|_{L^p(0,1)} )^q\mu(dx) 
+ 2^{q-1}\int_X \bigg(\int_0^1 (\Phi(x(\xi))- \Phi_{\alpha} (x(\xi)))d\xi \bigg)^q\mu(dx) .$$
The first integral  vanishes as $\alpha \to 0$, by Lemma \ref{Le:ellittiche} and dominated convergence. The second integral vanishes by monotone convergence, and the claim follows. 

Similar arguments yield that $Q^{1/2}D U_{\alpha}(x) = B^{-1}D U_{\alpha}(x) /\sqrt{2}$ converges to $ \Phi'(x)/\sqrt{2}$ pointwise a.e. and in $L^q(X, \mu)$, for every $q>1$. Indeed, by \eqref{gradX} we have $B^{-1}D U_{\alpha}(x) = (I+\alpha B)^{-1}( \Phi_{\alpha}'(I+\alpha B)^{-1}x)  $ for every $x\in X$. Arguing as before, with $ \Phi_{\alpha}'$ replacing $ \Phi_{\alpha}$, we see that $\Phi_{\alpha}'((I+\alpha B)^{-1}(x))$ converges to $\Phi '(x)$ in $L^1(0,1)$ as $\alpha \to 0$, for every $x\in L^{p-1}(0,1)$. (Note that now $g_{\alpha}$ does not converge to $0$ by monotone convergence but by dominated convergence, recalling that $\Phi_{\alpha}'\circ x$ converges to $\Phi'\circ x$  pointwise, and $|\Phi_{\alpha}' (x (\xi)) - \Phi'(x(\xi))| \leq | \Phi'(x(\xi))| \leq C_1(1+|x(\xi)|^{p-1})$). Recalling that the part of  $(I+\alpha B)^{-1}$ in $L^1(0,1)$ is a contraction in $L^1(0,1)$ we obtain that $ (I+\alpha B)^{-1}( \Phi_{\alpha}'(I+\alpha B)^{-1}x)  $ converges to $ \Phi'(x) $ in $L^1(0,1)$. 
Moreover, the last estimate yields $\|B^{-1}D U_{\alpha}\|_{L^1(0,1)} \leq   C_1(1+\|x \|_{L^{p-1}(0,1)}^{p-1})$. Since $L^1(0,1)$ is continuously embedded in $X$, Lemma \ref{Le:ellittiche} implies that $\|B^{-1}D U_{\alpha}(x)\| \leq g(x)$ for some $g\in \cap_{q>1}L^q(X, \mu)$ and for every $x\in L^{p-1}(0,1)$, hence for $\mu$-a.e. $x\in X$. 
By dominated convergence, $Q^{1/2}D U_{\alpha} $ converges to $ \Phi'(\cdot ) /\sqrt{2}$ in $L^q(X, \mu; X)$, for every $q\geq 1$. This shows that $U\in W^{1,q}_{1/2}(X, \mu)$, and ends the proof. 
\end{proof}

So, we can consider Kolmogorov operators of the Cahn--Hilliard  equation with reflection
$$\left\{ 
\begin{array}{l}
\displaystyle{dX = \frac{d^2}{d\xi^2}\bigg( - \frac{d^2X}{d\xi^2} + \Phi'(X)\bigg)dt +  N_{\mathcal C}(X(t))dt \ni dW(t), }\quad t>0, \;\xi\in (0,1), 
\\
\\
\ds\int_0^1X(\xi)d\xi =0, \quad t>0, 
\\
\\
\ds \frac{d X}{d\xi }(t, 0) = \frac{d^3 X}{d\xi ^3}(t, 0) = \frac{d X}{d\xi }(t, 1) = \frac{d^3 X}{d\xi ^3}(t, 1) =0, \quad t>0, 
\\
\\
X(0, \cdot) = x, 
\end{array}\right. $$
provided $\Phi$ satisfies the assumptions of Lemma \ref{Le:Phi}.  If in addition $G:X\mapsto \R$ satisfies Hypothesis \ref{HypG},  Corollary \ref{Cor:u} and   Theorem \ref{finale} yield that the weak solution $u$ to  \eqref{Neumann} in $\C = G^{-1}((-\infty, 0])$ belongs to $W^{2,2}(\C, \nu)\cap W^{1,2}_{-1/2}(\C, \nu)$,  it satisfies \eqref{stimaNeu}, and 
$\langle Du, DG\rangle _{|G^{-1}(0)}=0$, $\rho$-a.e.

\end{document}